\numberwithin{equation}{section}
\newtheorem{proposition}{Proposition}[section]
\newtheorem{theorem}{Theorem}[section]
\newtheorem{lemma}{Lemma}[section]
\newtheorem{definition}{Definition}
\newtheorem{remark}{Remark}
\newtheorem{assumption}{Assumption}
\DeclareMathOperator\arccosh{arccosh}
\newcommand{\p}{\partial}
\newcommand{\Div}{\nabla\cdot}
\title{Tumor growth with a necrotic core as an obstacle problem in pressure}
\author{Xu'an Dou\thanks{Beijing International Center for Mathematical Research, Peking University, Beijing, 100871, China (dxa@pku.edu.cn)}\quad\quad\quad\quad\quad Chengfeng Shen\thanks{School of Mathematical Sciences, Peking University, Beijing, 100871, China (2201110049@pku.edu.cn)}\quad\quad\quad\quad\quad   Zhennan Zhou\thanks{Beijing International Center for Mathematical Research, Peking University, Beijing, 100871, China (zhennan@bicmr.pku.edu.cn).}}
\begin{document}
\maketitle

\begin{abstract}

Motivated by the incompressible limit of a cell density model, we propose a free boundary tumor growth model where the pressure satisfies an obstacle problem on an evolving domain $\Omega(t)$, and the coincidence set $\Lambda(t)$ captures the emerging necrotic core. We contribute to the analytical characterization of the solution structure in the following two aspects. By deriving a semi-analytical solution and studying its dynamical behavior, we obtain quantitative transitional properties of the solution separating phases in the development of necrotic cores and establish its long time limit with the traveling wave solutions. Also, we prove the existence of traveling wave solutions incorporating non-zero outer densities outside the tumor bulk, provided that the size of the outer density is below a threshold. 
 
\end{abstract}

{\bf Mathematics Subject Classification 2020:} 35R35, 35C07, 92C10.

{\bf Keywords:} obstacle problem, free boundary model, traveling wave solution, tumor growth, necrotic core.

\section{Introduction}

\subsection{Impressible limit in a simplest setting}

Mechanical-based tumor growth model has received growing attention from a wide spectrum of interests, from theoretical mathematical analysis to parameter estimations against experimental data (see e.g. \cite{perthame2014heleAsym,ahelemellet2017hele,guillen2020heleshaw,falcó2023quantifying}). It often involves two key factors: cell movement driven by a mechanical pressure and cell growth or death regulated by various factors, including nutrient concentration. Mathematically it leads to nonlinear (coupled) partial differential equations, often with degenerate diffusion.

We first describe this kind of model in a simple setting \cite{perthame2014heleAsym}, before coming to the particular scenarios interested in this work. The tumor is described by a cell density $n_{\gamma}(x,t)$, with $x\in\mathbb{R}^d$ and $t\geq 0$ denoting space and time variables respectively. Here the subscript $\gamma$ indicates the dependence on a model parameter $\gamma>1$ to be specified in \eqref{con-relation}. The cell density is driven by the following equation
\begin{equation}\label{eq:compressible}
    \p_{t}n_{\gamma}-\Div(n_{\gamma}\nabla p_{\gamma})=G(c_{\gamma})n_{\gamma},\quad x\in\mathbb{R}^d,\,t>0.
\end{equation} Here the advection term $-\Div(n_{\gamma}\nabla p_{\gamma})$ captures the mechanical movement of cells. It is assumed that cells are driven by the velocity field given by the negative gradient of the pressure
\begin{equation}\label{Darcy-compressible}
    v_{\gamma}=-\nabla p_{\gamma}.
\end{equation} This relation \eqref{Darcy-compressible} is also known as the Darcy's law. And the pressure $p_{\gamma}$ relates to the cell density via a constitutive relation given by
\begin{equation}\label{con-relation}
    p_{\gamma}=\frac{\gamma}{\gamma-1}(n_{\gamma})^{\gamma-1}.
\end{equation} Here, the parameter $\gamma>1$ measures the stiffness of the pressure.

The cells are also subject to growth or death with a rate $G(c_{\gamma})$, which gives the term $G(c_{\gamma})n_{\gamma}$ on the right hand side of \eqref{eq:compressible}. Here $G(\cdot)$ is a given function $[0,+\infty)\to\mathbb{R}$, and $c_{\gamma}=c_{\gamma}(x,t)$ is the nutrient concentrations. Hence $G(c_{\gamma})=G(c_{\gamma}(x,t))$ depends on the space and time through the heterogeneity of $c_{\gamma}(x,t)$. The nutrient $c_{\gamma}(x,t)$ evolves by a process coupled with $n_{\gamma}$, which requires additional modeling. We shall fully describe the nutrient models of interest in Section \ref{Sec:2.2}.

Remarkably, the system \eqref{eq:compressible}-\eqref{con-relation} has a non-trivial limit when $\gamma\rightarrow+\infty$, which leads to a free boundary problem. It is called the incompressible limit and is first proved in \cite{perthame2014heleAsym}. For heuristic purposes, we present a formal derivation as follows.

Multiply \eqref{eq:compressible} by $\gamma n_{\gamma}^{\gamma-2}$, which is $\frac{d p_{\gamma}}{d n_{\gamma}}$ in view of \eqref{con-relation}, and we derive
\begin{equation}\label{eq:p-compressible}
    \p_t{p_{\gamma}}-|\nabla p_{\gamma}|^2=(\gamma-1)p_{\gamma}(\Delta p_{\gamma}+G(c_{\gamma})).
\end{equation} Assume that $p_{\gamma}$ and $c_{\gamma}$ have well-defined limits $p$ and $c$, and by taking $\gamma\rightarrow+\infty$ in \eqref{eq:p-compressible}, we formally obtain
\begin{equation}\label{complementary-relation}
    p(\Delta p+G(c))=0.
\end{equation}
 The relation \eqref{con-relation} becomes stiff when $\gamma$ goes to infinity. Precisely, in the limit $n\in[0,1]$ and the relation \eqref{con-relation} gives the Hele-Shaw graph
\begin{equation}\label{limit-hele-shaw-graph}
p\quad\begin{dcases}
    =0,\qquad 0\leq n<1,\\
    \in[0,+\infty),\quad n=1.
\end{dcases}
\end{equation} 

To derive a simplified geometric description, we consider patch solutions in the following form 
\begin{equation}\label{solu-char}
    n(x,t)=\mathbb{I}_{x\in \Omega(t)}.
\end{equation} In other words, for each time the density is an indicator function in space, where $\Omega(t)$ is an open set representing the tumor domain. In view of \eqref{limit-hele-shaw-graph} we assume
\begin{equation}
    \Omega(t)=\{x: n(x,t)>0\}=\{x: n(x,t)=1\}=\{x: p(x,t)>0\}.
\end{equation}
Then we deduce from \eqref{limit-hele-shaw-graph} that $p$ solves 
\begin{equation}\label{eq:p-Possion}
    \begin{dcases}
        -\Delta p=G(c),\quad \Omega(t),\\  
        p=0,\quad \p\Omega(t).\\
    \end{dcases}
\end{equation}
To describe how the domain $\Omega(t)$ evolves, following the Darcy's law \eqref{Darcy-compressible}, the boundary velocity in the outer normal direction $\boldsymbol{n}$ is given by
\begin{equation}\label{Darcy-incompresible}
     V_n=-\nabla p\cdot \boldsymbol{n},\quad x\in\p \Omega(t). 
\end{equation} System \eqref{eq:p-Possion}-\eqref{Darcy-incompresible} gives a Hele-Shaw type free boundary problem, which can be complemented by an initial data $\Omega(t)$ and some dynamics of the nutrient $c$.

This incompressible limit
links the cell density model \eqref{eq:compressible}-\eqref{con-relation} to the free boundary model \eqref{eq:p-Possion}-\eqref{Darcy-incompresible}, which itself belongs to another popular class of tumor growth models \cite{roose2007mathematical,friedman2007mathematical,lowengrub2010nonlinear}. Since the seminal work \cite{perthame2014heleAsym} there have been many studies on the incompressible limits in various settings e.g. \cite{bubba2019-helelimit2,david:hal-02515263,guillen2020heleshaw,dou2023tumor}. In particular, the geometric description  \eqref{eq:p-Possion}-\eqref{Darcy-incompresible} can be rigorously justified \cite{ahelemellet2017hele,kim2018porous} which relies on proving that the solution is of the patch form \eqref{solu-char}. Usually, two assumptions are needed: i) The growth rate is nonnegative $G\geq 0$ and ii) the initial data is a characteristic function $n(x,0)=\mathbb{I}_{x\in\Omega(t)}$. 

It is worth emphasizing that those two assumptions are mathematical idealizations, which lead to limitations to the application of these models. On the one hand, as tumors grow beyond the nutrient supply, cells in the tumor interior starve, become hypoxic, and die, leading to the formation of a necrotic core \cite{chaplain1993development,bull2020mathematical}. Accounting for a negative growth rate is necessary because the presence of the necrotic region alters tumor morphology, mechanics, transport, cell proliferation kinetics, and treatment response through complex feedback mechanisms. On the other hand, while cell density drops off substantially with distance from the tumor border, empirical evidence and scientific studies \cite{jass1987new,kienast2010real}  support a decaying but non-zero tumor cell distribution outside the primary tumor.

However, once the above two assumptions are relaxed, the free boundary problem \eqref{eq:p-Possion} and \eqref{Darcy-incompresible} may fail to be the correct description of the limit system. This calls for free boundary problems with new mechanisms, which we motivate and introduce as follows. 
\subsection{Beyond simplest setting}

\subsubsection{Necrotic core as an obstacle problem in pressure}
To model the necrosis of tumor cells, e.g. due to insufficient nutrients, it is natural to allow negative $G$. However, in this case, the incompressible limit of \eqref{eq:compressible} is not \eqref{eq:p-Possion}-\eqref{Darcy-incompresible} in the current form, as pointed out in \cite[Section 7.3]{perthame2015some}. One way to see it is to consider an example when $G$ is positive only near the boundary of $\p\Omega(t)$ and negative in the core of $\Omega(t)$. In that case, directly solving equation \eqref{eq:p-Possion} may result in \textit{negative} pressure, see Figure \ref{fig:1} (solid line). This contradicts the non-negativity of $p$, which shall be inherited from the compressible model \eqref{con-relation}
\begin{equation}
    p=\lim_{\gamma\rightarrow+\infty}p_{\gamma}\geq 0.
\end{equation}
\begin{figure}[htbp]
        \centering
        \includegraphics[width=0.5\linewidth]{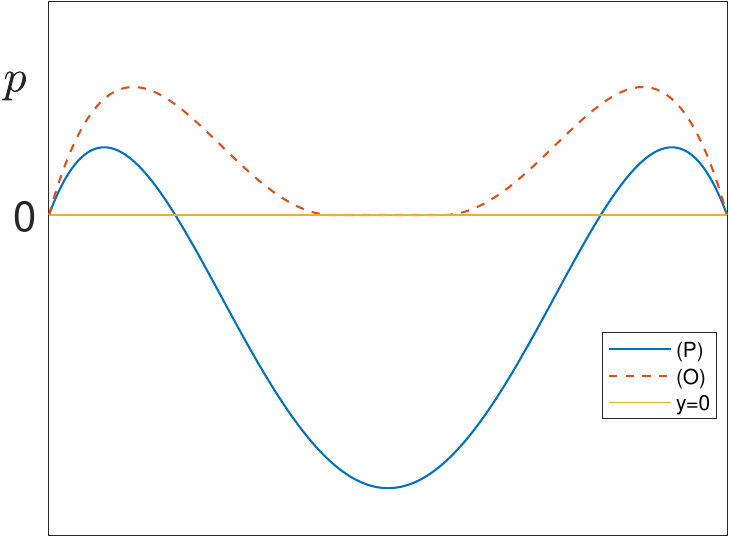}
        \caption{Comparison of the solutions to the Possion equation \eqref{eq:p-Possion} (solid line, with label $(P)$) and to the obstacle problem (dashed line, with label (O)). We see solving the Possion equation can leads to negative pressure while for the obstacle problem the solution is non-negative, and touches zero in the center.} 
        \label{fig:1}
\end{figure}
Another observation is that when $G$ takes negative values, a patch solution \eqref{solu-char}  may not persist, and there may appear a region where the density is unsaturated, i.e. $n(x,t)<1$.  

Biologically, the unsaturated region corresponds to a necrotic core. The cells at the tumor boundary have better accessibility to nutrients, while in the core the nutrient supply is not sufficient. Therefore while near the boundary tumor cells may grow and invade, in the core of the tumor cells may die, leading to the formation of a necrotic core. 

Hence, it is not only mathematically interesting to characterize the incompressible limit allowing $G<0$, but also scientifically meaningful as it can give a description of the necrotic core in this modeling framework. In this direction, the traveling wave solution of the limit model is studied in \cite{perthame2014traveling} and the incompressible limit has been rigorously derived and studied in \cite{david:hal-02515263,guillen2020heleshaw}. Notably, for the characterization of $p$, \cite{guillen2020heleshaw} draws an interesting link to {obstacle problems}. 

Inspired by previous works, to better understand the necrotic core, here we propose and study a free boundary model directly. Our model consists of four components: the evolving domain $\Omega(t)$ for the tumor bulk, the pressure $p$, the nutrient concentration $c$, and the cell density $n$. The key distinctive feature lies in the characterization of $p$. At each time $t$ with given domain $\Omega(t)$ and $c(x,t)$, instead of solving the Possion equation \eqref{eq:p-Possion}, $p$ solves an \textit{obstacle problem}, that is 
\begin{equation}\label{minimization-ob}
            p(\cdot,t)=\arg\min_{u \in E} \left(\int_{\Omega(t)}\bigl(\frac{1}{2}|\nabla u|^2-G(c(x,t))u\bigr) dx\right), \quad E = \{ u: {u\in H^1_0(\Omega(t)),\, u\geq 0\ \text{a.e.}} \}.
\end{equation}Then by definition $p\geq 0$, as it is the minimizer among a family of non-negative functions. Note that without the non-negative constraint on $u$, the obstacle problem \eqref{minimization-ob} becomes the variational characterization of the Possion equation \eqref{eq:p-Possion}. See Figure \ref{fig:1} for a comparison of the profiles of two solutions.

Given a solution $p$ to \eqref{minimization-ob}, we can divide $\Omega(t)$ into two parts as 
\begin{equation}\label{def:necrotic}
\begin{aligned}
        \Omega(t)&=\{x\in \Omega(t), p(x,t)=0\} \cup \{x\in \Omega(t), p(x,t)>0\}\\
        &=:\Lambda(t)\cup N(t).
\end{aligned}
\end{equation} In the classical theory of obstacle problems (see e.g. \cite[Chapter 1]{MR1009785}), $\Lambda(t)$ is called the coincidence set, as in it $p$ coincides with zero. And $N(t)$ is called the non-coincidence set.

Here we may name $\Lambda(t)$ a (mathematical) necrotic core. As by the limit relation \eqref{limit-hele-shaw-graph}, we have $n=1$ in $N(t)$ but $n\leq 1$ in $\Lambda(t)$. Moreover, we shall see later $G(c)\leq 0$ in the interior of $\Lambda(t)$ (e.g. \eqref{ineq-ob}). These indicate that in $\Lambda(t)$ cells are undergoing necrosis due to the lack of nutrients and the density is unsaturated.  The necrotic core $\Lambda(t)$ is a key concept emerging from formulation \eqref{minimization-ob}, which will be further investigated in this paper. See Figure \ref{fig:2} for an illustration.  
\begin{figure}[htbp]
    \centering
    \includegraphics[width=0.5\textwidth]{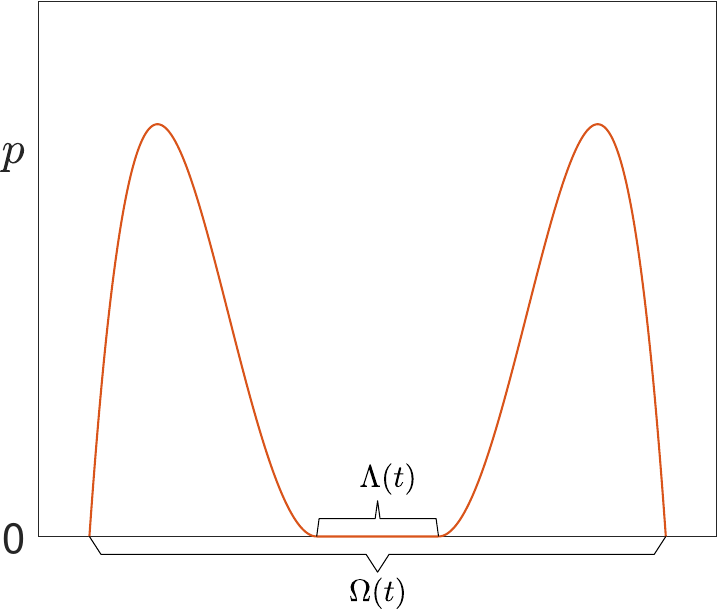}
    \caption{An illustration for $\Lambda(t)$. Pressure $p$ solves an obstacle problem on tumor bulk $\Omega(t)$ and the coincidence set $\{x\in\Omega(t):p=0\}$ gives the necrotic core $\Lambda(t)$. }
    \label{fig:2}
\end{figure}

It is worth noting that the formulation {proposed} here is akin to the characterization of the limit problem already {justified} in \cite{guillen2020heleshaw}, both say $p$ satisfies an obstacle problem. Briefly speaking, the difference is that here we prescribe a tumor bulk domain $\Omega(t)$ on which the obstacle problem is solved. It is expected that the formulation proposed here can be viewed as a special scenario in the general result of \cite{guillen2020heleshaw}, under certain assumptions. Besides, our formulation also morally agrees with the treatment in \cite{perthame2014traveling}, although in that work the necrotic core was not explicitly connected to an obstacle problem.  More detailed comments will be given later in Section 2 after the complete model is presented.

 Instead of being derived from an incompressible limit of \eqref{eq:compressible}, free boundary models for the necrotic core have also been proposed directly, to which many studies have been devoted \cite{byrne1996growth,cui2001analysis,cui2006formationNecrotic,HAO2012694}, see in particular recent mathematical analysis \cite{cui2019analysisNecrotic,wu2021analysis}. These models have similar but different dynamics compared to ours. In particular, in these models an obstacle problem also appears, but it is {for \textit{nutrient} $c$, rather than for \textit{pressure} $p$}.  A more detailed discussion on this will be made in Section \ref{sec:conclusion}. 

\subsubsection{Outer Density}

To complete the model, in addition to the description of $p$ \eqref{minimization-ob} we need to determine how the other three components---the domain $\Omega(t)$, the nutrient concentration $c$, and the density $n$ evolve. 

In general, the density function takes the following form
\begin{equation}\label{outer-density-general}    n(x,t)= n^{\text{in}}(x,t) \mathbb{I}_{x\in\Omega(t)}+n^{\text{out}}(x,t),
\end{equation}
where $\forall t>0$, $n^{\text{out}}(x,t) \in [0,1)$ is supported in $\mathbb R^d \backslash \Omega(t)$, and is referred to as the outer density while $n^{\text{in}}(x,t) \in [0,1]$ is the density inside the tumor bulk. For patch solutions, we simply have $n^{\text{out}} \equiv 0$ and 
$n^{\text{in}} \equiv 1$. We remark that when the growth rate $G$ may take negative values, it is difficult to describe $n^{\text{out}}$ in an explicit way. See Section \ref{Sec:2.3} for an illustrating example. 

The presence of the outer density will also influence the boundary propagation speed of the tumor bulk region $\Omega(t)$. In this scenario, Darcy's law still applies to derive its boundary moving speed. While in the patch solution case, the density jumps to $0$ across the tumor bulk boundary  $\p\Omega(t)$, in general, the density jumps to a finite value $n^{\text{out}}(x,t) \in [0,1)$ across $\p\Omega(t)$, and the velocity law \eqref{Darcy-incompresible} needs adjustment for account for the effect of the outer density. The precise modified law has already been established in \cite{ahelemellet2017hele,kim2018porous,guillen2020heleshaw}. We incorporate the velocity law as part of the model and will present it in detail in Section 2 along with the dynamics for $c$ and $n$.

\subsection{Main results}

To summarize, we consider a free boundary model motivated by the incompressible limit of the cell density model \eqref{eq:compressible}. The key feature is that at each time the pressure $p$ solves an obstacle problem on an evolving domain $\Omega(t)$, which gives the necrotic core $\Lambda(t)$ as the coincidence set. The complete model also involves the dynamics of nutrient $c$ and cell density $n$, which are to be elaborated on in Section \ref{sec:2}.

Our work explores two aspects of the proposed tumor growth model. First, we derive a semi-analytical solution without outer density, elucidating the evolving solution profile and long-term limits. Second, we prove the existence of traveling wave solutions accounting for non-zero densities outside the tumor.

Together, these contributions provide an analytical characterization of two important yet insufficiently examined challenges in mathematical tumor growth models: necrotic core formation and outer density evolution. The semi-analytical solution offers unique insights into tumor morphology changes over time. The traveling wave results mathematically demonstrate the impact of considering more realistic density profiles. 

The rest of the paper is arranged as follows. We present the proposed model in its complete form in Section \ref{sec:2}. The semi-analytical solution for the formation of the necrotic core is derived and studied in Section \ref{sec:analytical}. And the existence of the traveling wave solution with outer density is studied in Section \ref{sec:4}. Finally, conclusions and discussions are given in Section \ref{sec:conclusion}.

\section{Model introduction}\label{sec:2}

In this section, we present the complete model for tumor growth, where the tumor bulk is described by an evolving domain $\Omega(t)$ (possibly containing a necrotic core). It is coupled with three functions: the pressure $p$, the nutrient concentration $c$, and the cell density $n$.

We begin by revisiting the obstacle problem formulation for pressure $p$ in Section 2.1, with a preliminary analysis to elucidate the solution structure. Subsequent sections give the dynamics of the other components: Section 2.2 focuses on nutrient $c$; Section 2.3 delves into the dynamics of cell density $n$; and Section 2.4 describes how the tumor bulk $\Omega(t)$ evolves. Finally, Section 2.5 offers a summary and discussions.

\subsection{Obstacle problem for pressure} \label{Sec:2.1}
As present in the introduction, for each time $t$, the pressure $p(\cdot,t)$ solves the obstacle problem \eqref{minimization-ob}, on domain $\Omega(t)$ with the source term $G(c(\cdot,t))$. Let us give some preliminary analysis based on the classical theory of obstacle problems (see e.g. \cite{MR1009785,fernandez2023regularity}).

\paragraph{Preliminary Analysis}

Firstly, \eqref{minimization-ob} has a unique solution $p\geq0$ with certain regularity, given a regular (e.g. bounded) source $G(c(\cdot,t))$ and a regular domain $\Omega(t)$. 

And following the terminology from the theory of the obstacle problem, the domain $\Omega(t)$ is divided into two parts: the coincidence set $\Lambda(t)$ and the non-coincidence set $N(t)$, defined via \eqref{def:necrotic}
\begin{equation}\label{def:necrotic-2}
\begin{aligned}
        \Lambda(t)&:=\{x\in \Omega(t),\quad p(x,t)=0\},\\ N(t)&:=\{x\in \Omega(t),\quad p(x,t)>0\}.
\end{aligned}
\end{equation}
As discussed in the introduction, $\Lambda(t)$ is interpreted as the necrotic core and we might view $N(t)$ as the proliferating rim, i.e. the saturation region inside the tumor bulk. 

Also, by the classical theory we have a super-solution inequality
\begin{equation}\label{ineq-ob}
    \Delta p +G(c)\leq 0,\qquad x\in\Omega(t),
\end{equation} in the distributional sense. This in particular implies that $G\leq 0$ in the interior of $\Lambda(t)$, which indicates that cells are suffering from the lack of nutrients in this domain.

Another classical result on obstacle problem states
\begin{equation}
    \nabla p=0,\qquad x\in\p \Lambda(t)\cap \Omega(t),
\end{equation} which describes the local behavior of pressure near the boundary of the necrotic core. 

Finally, we recall
\begin{equation}\label{com-ob}
    p(\Delta p+ G(c))=0,\quad x\in \Omega(t),
\end{equation} as a basic property for obstacle problem \eqref{minimization-ob}. In the context of the incompressible limit, it is also known as the complementary relation, as derived in \eqref{eq:p-compressible}-\eqref{complementary-relation}. As a consequence, \eqref{com-ob} implies that $p$ solves the Possion equation $\Delta p+G(c)=0$ whenever $p>0.$ Indeed, we can recast the obstacle problem \eqref{eq:p-Possion} as 
\begin{equation}\label{ob-explicit}
        \begin{dcases}
            p\geq 0,\quad x\in\Omega(t),\\
            {\Lambda(t)=\{x\in\Omega(t):p(x,t)=0\}},\\
                        \nabla p=0,\quad x\in\partial\Lambda(t)\cap \Omega(t),\\
            -\Delta p=G(c),\quad x\in N(t)=\Omega(t)\backslash\Lambda(t),\\
            p=0,\quad x\in\partial\Omega(t).\\
        \end{dcases}
    \end{equation}  This more explicit formulation is useful for later calculations. We stress that in \eqref{ob-explicit} the necrotic core $\Lambda(t)$ is unknown a priori, but part of the solution implicitly determined by \eqref{ob-explicit}. 
    
    Note that $\Lambda(t)$ can be empty when the solution to the Poisson equation \eqref{eq:p-Possion} happens to be non-negative. This is always the case when $G(c(\cdot,t))>0$, which means the necrotic core does not form when the nutrients are sufficient.
\subsection{Dynamics of nutrients} \label{Sec:2.2}

To complete the model, first we supply the dynamics of nutrient $c$. There are two common settings \cite{perthame2014traveling,feng2023tumor}. Let us first look at the \textit{in vitro} one, given by
\begin{equation}\label{eq:model-c-in-vitro}
\begin{dcases}
-\Delta c+\psi(n)c=0  ,\quad x\in \Omega(t),\\
c=c_B  ,\quad x\notin\partial\Omega(t).\\
\end{dcases}
\end{equation}

In this setting, it is assumed that the nutrient is sufficient at a constant level $c_B>0$ as long as $x$ is not in the tumor bulk $\Omega(t)$. This in particular gives the boundary condition $c=c_B$ at $\p \Omega(t)$. Inside $\Omega(t)$ the nutrient is subject to diffusion and consumption by cells. It is assumed that nutrient diffusion is at a faster timescale therefore with a quasi-static approximation  \eqref{eq:model-c-in-vitro} is in an elliptic form. And $\psi(n)\geq 0$ is the consumption rate which depends on the cell density.

Another setting is \textit{in vivo}, given by 
\begin{equation}\label{eq:model-c-in-vivo}
\begin{dcases}
-\Delta c+\psi(n)c=0  ,\quad x\in \Omega(t),\\
-\Delta c+\psi(n)c=c_B-c  ,\quad x\notin \Omega(t),\\
\text{$c$ is $C^1$ at $\p\Omega(t)$ },\\
c\rightarrow c_B ,\quad \text{as}\quad |x|\rightarrow+\infty .\\
\end{dcases}
\end{equation} Unlike the \textit{in vitro} case \eqref{eq:model-c-in-vitro}, here in \eqref{eq:model-c-in-vivo} the nutrient $c$ solves an equation on the whole domain, with an extra source term $c_B-c$ outside $\Omega(t)$. The underlying model assumption is that the nutrient is supplied by the vasculature network, which is available only outside the tumor bulk.

\subsection{Dynamics of density} \label{Sec:2.3}

In this part, we focus on the dynamics of cell density $n$, which is supposed to be self-contained for the proposed model and consistent with the existing works on incompressible limits, in particular \cite{guillen2020heleshaw}. While it is not our intent to recapitulate these seminal findings in their most general form, our formulation can be viewed as a special case, under some assumptions.

First, recall the patch solution case, when initial density is a characteristic function i.e. $n_{\text{init}}(x)=\mathbb{I}_{x\in\Omega(0)}$, and the patch solution persists. That is, $n(\cdot,t)$ remains to be a characteristic function for each $t$ occupying the set $\Omega(t)$, resulting in
\begin{equation}
\begin{dcases}
        n(x,t)=1,\quad x\in\Omega(t),\\
    n(x,t)=0,\quad x\notin\Omega(t).
\end{dcases}
\end{equation} This patch solution can be maintained when $G$ is non-negative.

Now that we allow the growth rate $G$ to take negative values and the presence of outer density outside the tumor bulk $\Omega(t)$, the dynamics of the density becomes more complicated. We consider the initial data in the following  form
\begin{equation}\label{outer-density-initial-data}
    n_{\text{init}}(x)=\mathbb{I}_{x\in\Omega(0)}+n_{\text{init}}^{\text{out}}(x),
\end{equation} which is a characteristic function plus an ``outer density'' with
\begin{equation}    n_{\text{init}}^{\text{out}}(x)=0,\quad x\in \Omega(0),\qquad\qquad 0\leq n_{\text{init}}^{\text{out}}(x)<1\quad x\notin \Omega(0).
\end{equation} 
Here, $\Omega(0)$ is a bounded connected open set in $\mathbb R^d$, describing the initial tumor bulk region, and there may develop a necrotic core $\Lambda(t)$ inside $\Omega(t)$.

Given a specific time $t$, when $x\in N(t)$, by the Hele-Shaw graph \eqref{limit-hele-shaw-graph}, we immediately know that
\begin{equation}
    n(x,t)=1,\quad x\in N(t)=\Omega(t)\backslash \Lambda(t).
\end{equation}
In other words, in the proliferating rim, the density is always saturated. 

However, when $x\in \mathbb R^d \backslash N(t)$, $x$ is either located outside the tumor bulk or in the necrotic core, the pressure is zero and cells are only subject to local growth or death with rate $G$. Hence, we have 
\begin{equation} \label{eq:rate_eq}
    \frac{d}{dt}n(x,t)=G(c(x,t))n(x,t),\quad x\in \mathbb R^d \backslash N(t).
\end{equation}
In particular, as we discussed in Section \ref{Sec:2.1}, $G\leq0$ in the interior of the necrotic core $\Omega(t)$, and the density will continue to decrease. Whereas, it is likely that the nutrient is sufficient outside the tumor bulk, and the density will increase.

\paragraph{A heuristic example}

So far, we have described the density dynamics, but since we have two moving boundaries defining the tumor bulk and the necrotic core respectively, it is helpful to look at the density dynamics at a fixed spatial position $x$.  Assume initially, $x$ is located outside the tumor bulk, i.e. $x \notin \Omega(0)$,  $t_0$ is the first time the tumor occupies $x$, and $t_1 (>t_0)$ is the first time the necrotic core forms at $x$. More precisely, these times are defined as
\begin{equation}\label{def-t0t1}
    t_0=\inf\{t\geq 0,\, x\in \Omega(t)\},\qquad\qquad\qquad t_1=\inf\{t\geq 0,\, x\in \Lambda(t)\}.
\end{equation} 
Then the density dynamics at $x$  is given by
\begin{equation}\label{dynamics-n-fixed-x-outer}
\begin{dcases}
    n(x,0)=n_{\text{init}}(x),\\
    \frac{d}{dt}n(x,t)=G(c(x,t))n(x,t),\quad t\in(0,t_0),\\
    n(x,t)=1,\quad t\in(t_0,t_1),\\
    \frac{d}{dt}n(x,t)=G(c(x,t))n(x,t),\quad t\in(t_1,+\infty).
\end{dcases}
\end{equation} 

Suppose that the nutrient is sufficient outside the tumor bulk, and thus $G(c(x,t))>0$ for $x\notin \Omega(t)$. Hence in the time interval $(0,t_0)$ the density is increasing, until it jumps to $1$ at $t_0$, at which time the position $x$ starts to be occupied by the tumor bulk. Finally, at $t_1$, position $x$ becomes part of the necrotic core and the density starts to decrease.

Note that for the above dynamics to make sense we have implicitly assumed that: the density outside the tumor bulk is strictly less than $1$ before the given position becomes part of $\Omega(t)$. In other words, tumor cells can not form new saturated regions before being absorbed by the main bulk; the tumor bulk $\Omega(t)$ and the necrotic core $\Lambda(t)$ monotonically increase over time, which ensures that once position $x$ becomes part of $x\in \Lambda(t)$, it remains so for all $t>t_1$. But still, this example demonstrates that the local density can be uniquely determined given the propagation of the moving boundaries.   

We conclude this part by providing a numerical simulation of the tumor growth model with \textit{in vitro} nutrient dynamics. The model parameters are given by: $c_B=1,\bar{c}=0.9$,
$$
\psi(n)=n,\quad \mbox{and} \quad G(c)=\begin{cases}
    -2,&c<\bar{c};\\
    0.2(c-\bar{c})+1.2,& c\ge \bar{c}.
\end{cases}
$$
We solve the problem on the spatial domain $[0,3]$, where the initial density is given by
$$
n^0(x)=\begin{cases}
    1,&1.2\le x\le 1.8;\\
    0.1e^{-40(|x-1.5|-0.3)},& 0\le x<1.2 \text{ or }1.8<x\le 3.
\end{cases}
$$
The simulation results are shown in Figure \ref{fig:8}. We observe that the cell density is initialized as a patch with a decaying outer density, and as the tumor grows, an expanding necrotic core emerges at the center while the outer density persists. From the position $x=2$, we observe from the last picture of Figure \ref{fig:8} that it is initially located in the outer region and its local density grows continuously until the tumor bulk invades at $t=t_0 \approx 4$ and the density jumps immediately to $1$. As the evolution goes on, the position $x=2$ stays in the saturation region until it is occupied by the necrotic core at $t=t_1 \approx 21 $, and the density starts to drop continuously. 

  \begin{figure}[htbp]
    \centering
\includegraphics[width=0.4\textwidth]{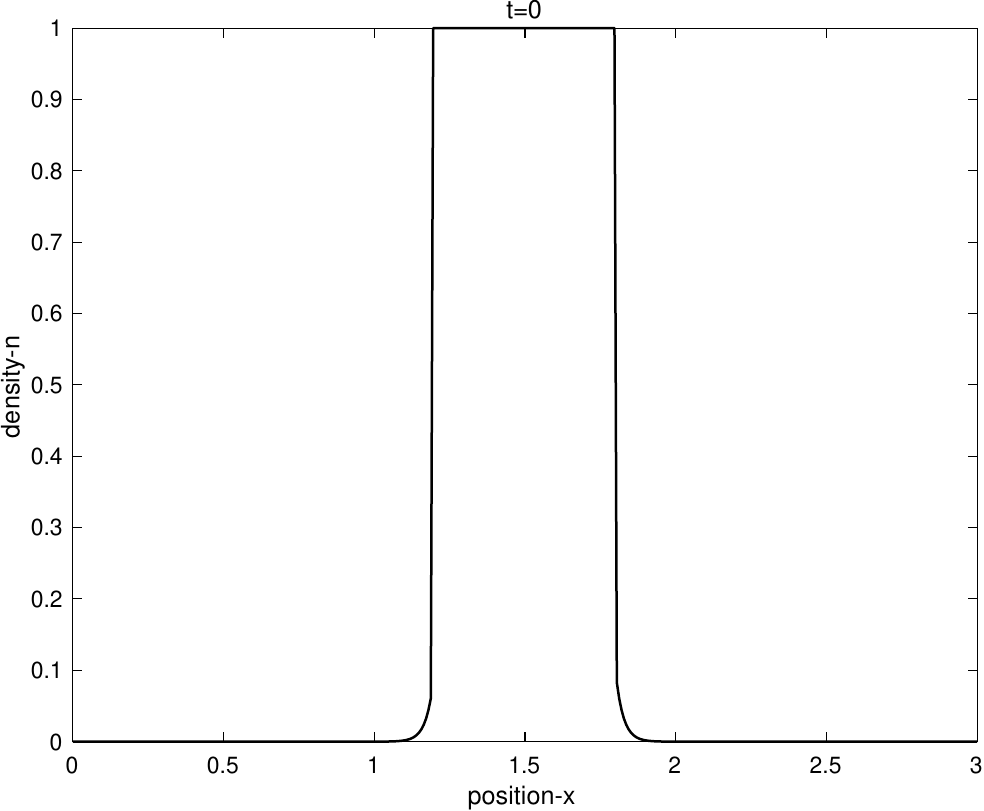} \quad
\includegraphics[width=0.4\textwidth]{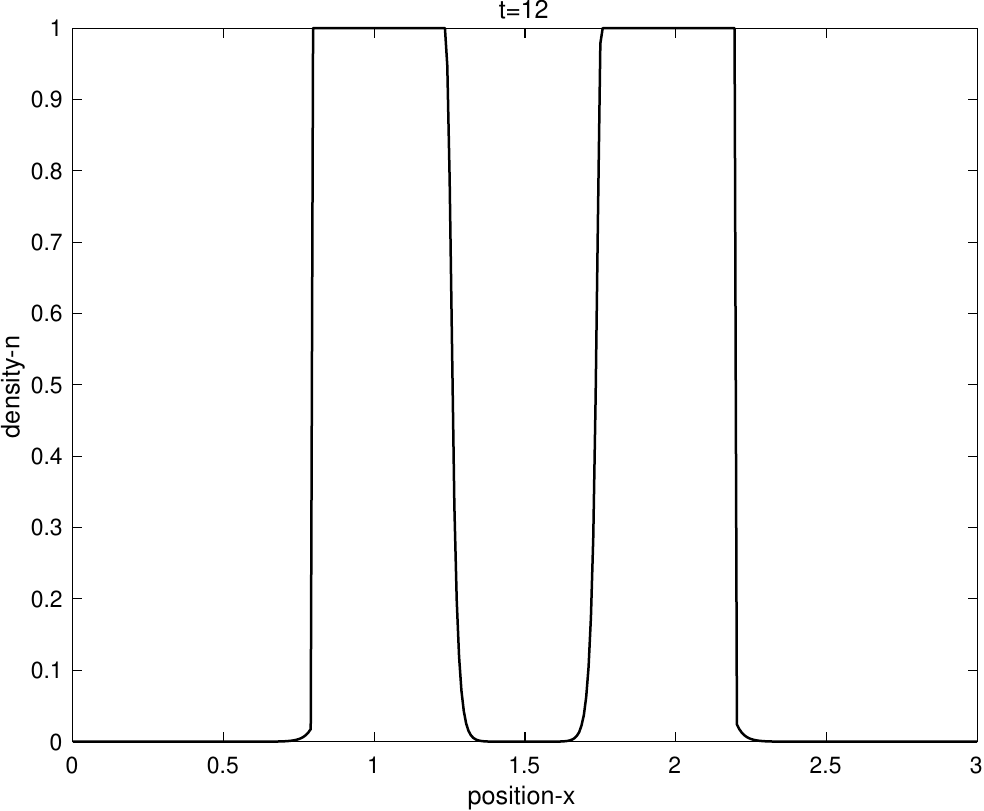} \\
\quad \includegraphics[width=0.4\textwidth]{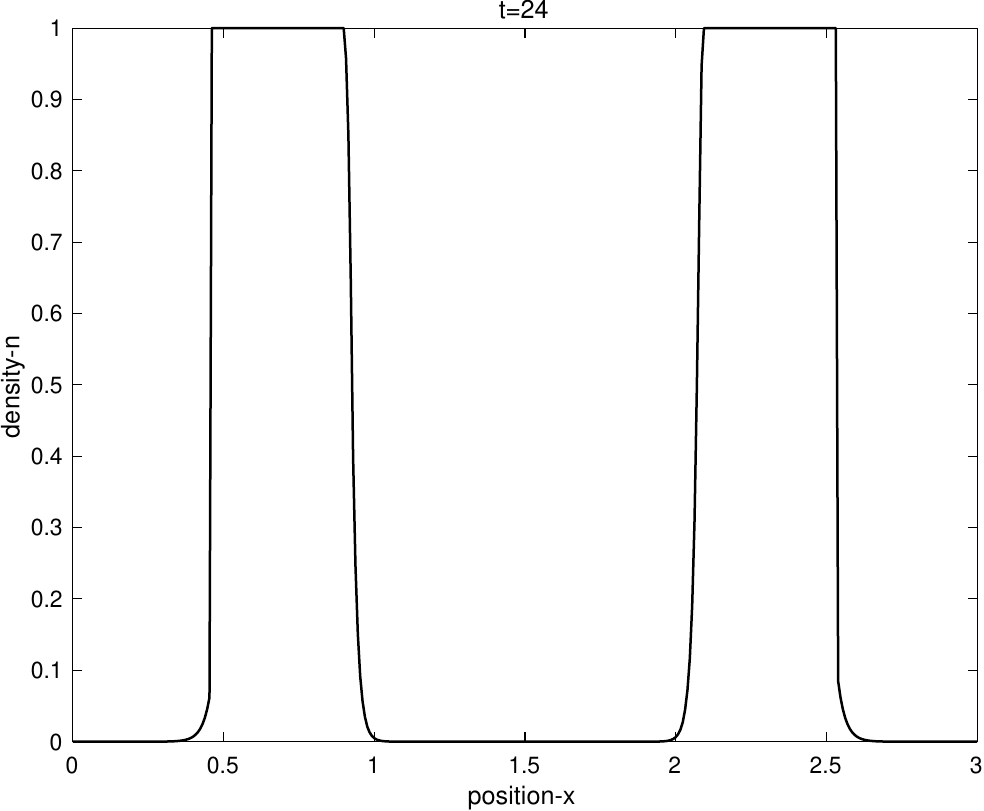}
\includegraphics[width=0.455\textwidth]{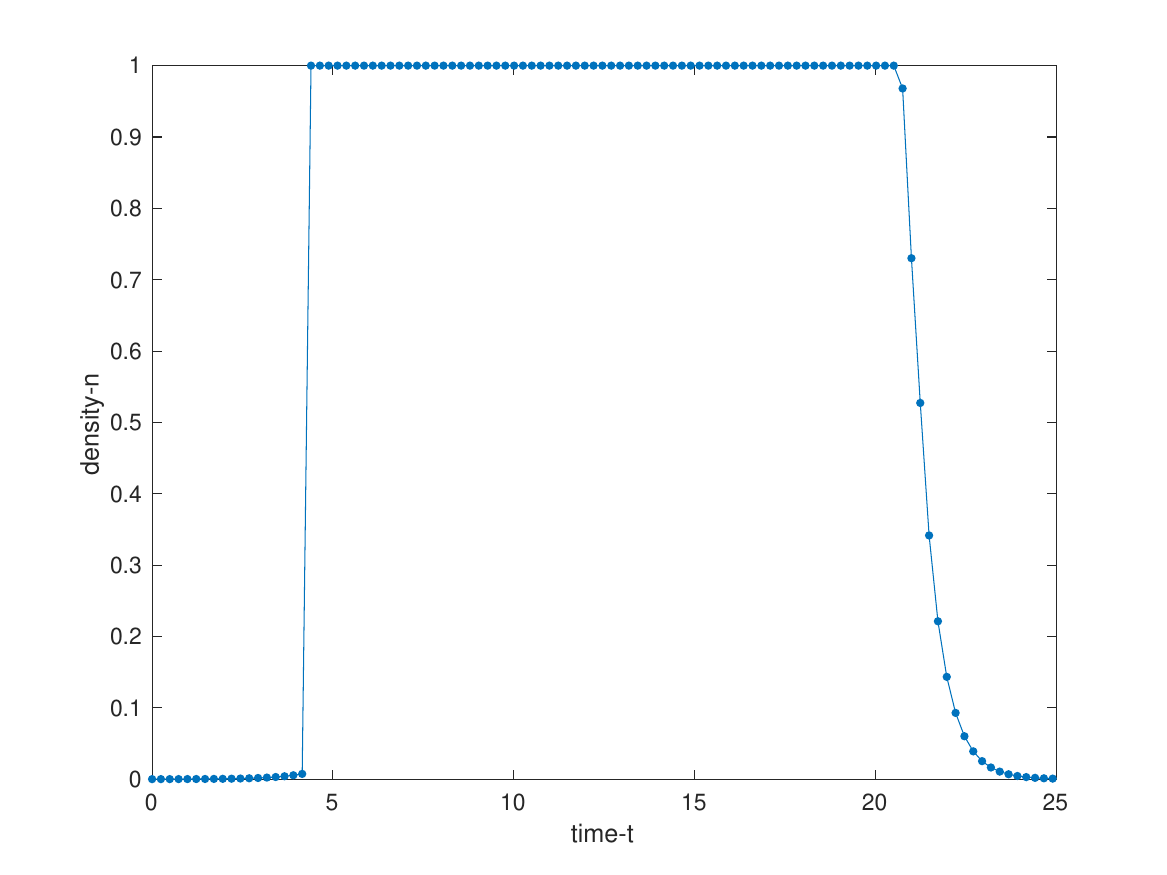}
    \caption{Numerical simulation of the tumor growth model. The first three plots are the cell density on $[0,3]$ at $t=0$, $t=12$ and $t=24$, respectively. The last picture is the time evolution of the local density at $x=2$. }
    \label{fig:8}
\end{figure}

\subsection{Dynamics of the tumor bulk: Boundary moving speed.}

In previous sections, we have discussed the dynamics of $p$, $c$ and $n$, treating $\Omega(t)$ as a family of given domains. Now we describe how $\Omega(t)$ evolves, which completes the model. Precisely, we give the boundary moving speed of $\p\Omega(t)$.

In the case without outer density (initially $n(x,0)=\mathbb{I}_{x\in\Omega(0)}$), the boundary moves according to the Darcy's law as \eqref{Darcy-incompresible}
\begin{equation}\label{Darcy-incompresible-revisited}
     V_n=-\nabla p\cdot \boldsymbol{n},\quad x\in\p \Omega(t). 
\end{equation}

In the presence of outer density (c.f. \eqref{outer-density-initial-data}), \eqref{Darcy-incompresible-revisited} is modified to

\begin{equation}\label{Darcy-incompresible-outer}
     V_n(1-n^{\text{out}}(x,t))=-\nabla p\cdot \boldsymbol{n},\quad x\in\p \Omega(t), 
\end{equation} where for $x\in \p\Omega(t)$ the outer density at $x$ is defined via
\begin{equation}
    n^{\text{out}}(x,t):=\lim_{y\rightarrow x, y\notin\Omega(t)}n(x,t),
\end{equation} which is assumed to be less than $1$. With a factor $1/(1-n^{\text{out}})$, \eqref{Darcy-incompresible-outer} means the tumor accelerates invasion when the outer density is positive.

\subsection{Summary and discussion}

We have described the joint dynamics of an evolving domain $\Omega(t)$ and three functions $p$, $c$, $n$: the pressure solves an obstacle problem at each time \eqref{minimization-ob}/\eqref{ob-explicit}; the nutrient concentration has two versions of dynamics, either \textit{in vitro} \eqref{eq:model-c-in-vitro}  or \textit{in vivo} \eqref{eq:model-c-in-vivo}; the density evolves according to \eqref{dynamics-n-fixed-x-outer}; and finally the boundary moving speed of $\Omega(t)$ is given by \eqref{Darcy-incompresible-outer}. 

We note that the whole space is divided into three parts: outside tumor bulk, proliferation rim, and the necrotic core:
\begin{equation}
    \mathbb{R}^d=(\mathbb{R}^d\backslash\Omega(t))\cup N(t)\cup \Lambda(t),
\end{equation} where the latter two parts consist of the tumor bulk $\Omega(t)$, and the necrotic core $\Lambda(t)$ can be empty.

The density $n$ can also be decomposed into three parts accordingly
\begin{equation}
    n(x,t)= \mathbb{I}_{x\in \mathbb R^d \backslash \Omega(t)} n^{\text{out}}(x,t)+\mathbb{I}_{x\in N(t)}+\mathbb{I}_{x\in \Lambda (t)}n^{\text{core}}(x,t),
\end{equation} where the density is saturated on $N(t)$. In the outer part and the necrotic core part, it undergoes growth or death driven by the rate equation \eqref{eq:rate_eq}.

For pressure $p$, by Definition \eqref{def:necrotic-2} we know $p>0$ on $N(t)$ and $p=0$ on $\Lambda(t)$. It is also natural to set $p=0$ outside $\Omega(t)$ as a zero extension. Hence to be consistent with the relation \eqref{limit-hele-shaw-graph}, we restrict ourselves to the case $n^{\text{out}}(x,t)<1$ -- the density can jump to $1$ only when $x$ is invaded by $\Omega(t)$. In other words, cells can not form new ``islands'' in isolation, without being incorporated into by the main tumor bulk. We stress that this restriction is necessary to focus on the case when $\Omega(t)$ has a simple geometry, e.g. a sphere, which allows us to completely describe its dynamics via the boundary moving speed \eqref{Darcy-incompresible-outer}.

The domain $\Omega(t)$ evolves according to the boundary moving speed \eqref{Darcy-incompresible-outer} of $\p\Omega(t)$. While there might also be an inner boundary $\p\Lambda(t)$ for the necrotic core, we do not specify its boundary moving speed. Rather, $\Lambda(t)$ is determined once $\Omega(t)$ is fixed, as it is defined as the coincidence set \eqref{def:necrotic-2} in the obstacle problem for $p$ \eqref{minimization-ob}. While in general the coincidence set in an obstacle problem can be complicated, here we assume that $\Lambda(t)$ (if non-empty) is indeed like a core in $\Omega(t)$, in particular $\p\Lambda(t)\subset\Omega(t)$. This is the case in later computations. 

In a related work \cite{guillen2020heleshaw}, the pressure is proved to satisfy an obstacle problem but on the $\{x:n(x,t)=1\}$. Here we enlarge the domain on which the obstacle problem is solved, and the enlarged domain $\Omega(t)$ may contain a necrotic core where $n$ can be strictly less than $1$. This enlargement of the domain can be justified in simple cases, e.g. when $\Omega(t)$ is a sphere and the nutrient concentration $c$ is radially symmetric and increases with respect to the radius. The justification for more general cases is beyond the scope of this paper. Whereas, we would like to point out that such a relaxation of the domain allows us to partially decouple the joint dynamics, which facilitates the computation of the analytical solution in the next section.

\section{Semi-analytical studies on the necrotic core}\label{sec:analytical}

In this section, we study the model proposed in Section \ref{sec:2} in the absence of the outer density. We give a thorough discussion on solution structure, transition behavior, and long time dynamics, in a specific parameter setting, which allows us to further decouple the model and to derive semi-explicit solutions.

\subsection{Setting and main results}

For simplicity, we consider the one-dimensional radial-symmetric case. The tumor bulk $\Omega(t)=(-R(t),R(t))$ is characterized by its radius $R(t)>0$, and $p$, $c$ and $n$ are even functions in space. {We choose the parameter setting analogous to \cite[Section 3.3]{perthame2014traveling}, which allows us to establish the connection with the traveling wave solution studied there.}

For pressure $p$, the growth rate $G(c)$ is chosen  to be
\begin{equation}\label{specify-G}
    G(c)=\left\{\begin{array}{cc}
g_{+}, & c>\bar{c}, \\
-g_{-}, & c<\bar{c},
\end{array}\right.
\end{equation} where $g_+,\,g_->0$ and $0<\bar{c}<c_B$ are constants. This means cells grow or decay at constant rates, which depends on whether the nutrient concentration is greater than a threshold.

For nutrient $c$, we consider the \textit{in vitro} model \eqref{eq:model-c-in-vitro} and the consumption rate $\psi(n)$ is specified as
\begin{equation}\label{specify-psi}
    \psi(n)=\left\{\begin{array}{ll}
\lambda, & n=1, \\
\lambda n_{c}, & n<1, 
\end{array}\right.
\end{equation} where $\lambda$ and $n_c$ are positive constants with $n_c<1$.

The necrotic core $\Lambda(t)$, defined as the coincidence set for $p$, i.e.  \eqref{def:necrotic-2}, is given by the ansatz
\begin{equation}\label{anstaz-lambda}
    \Lambda(t)=[-r(t),r(t)],\quad \text{for some $r(t)\geq 0$},\qquad \text{or $\quad\Lambda(t)=\emptyset$ is empty}.
\end{equation} This anstaz is made due to the solution symmetry, and it will be verified by later computations.

For density $n$, we have assumed that there is no outer density (i.e. setting $n^{\text{out}}_{\text{init}}=0$ in \eqref{outer-density-initial-data}). Moreover, we expect the density to be unsaturated in the interior of the necrotic core, i.e.  
\begin{equation}\label{claim-density}
    n(x,t)<1,\quad x\in(-r(t),r(t)).
\end{equation}  With \eqref{claim-density}, we can solve for $c$ as the consumption rate $\psi(n)$ is chosen to be constant when $n<1$ \eqref{anstaz-lambda}.

Finally, for $\Omega(t)$, the boundary moving speed is given by \eqref{Darcy-incompresible-revisited} as there is no outer density.

Now we reformulate \eqref{ob-explicit},\eqref{eq:model-c-in-vitro} to the precise equations to solve in the specified setting above. For a fixed time $t$, $\Omega(t)=(-R(t),R(t))$. First, we will assume the necrotic core is non-empty and try to find its radius $r(t)> 0$. 
In the proliferating rim $N(t)=(-R(t),-r(t))\cup(r(t),R(t))$ we have
\begin{equation}\label{eq:proliferation-rim}
    \begin{dcases}
        n=1,\quad &r(t)<|x|<R(t),\\
        -\p_{xx}c+\lambda c=0,\quad &r(t)<|x|<R(t),\\
        -\p_{xx}p=G(c),\quad &r(t)<|x|<R(t),\\
        p\geq0,\quad &r(t)<|x|<R(t),\\
        p=0,\quad &|x|=R(t),\\
        c=c_B,\quad &|x|= R(t).
    \end{dcases}
\end{equation}
In the necrotic core $[-r(t),r(t)]$ and on the interface $|x|=r(t)$, we have
\begin{equation}\label{eq:necrotic-core}
    \begin{dcases}
        n<1,\quad &|x|<r(t),\\
        -\p_{xx}c+\lambda n_c c=0,\quad &|x|<r(t),\\
        p=0,\quad &|x|<r(t),\\
        p=0,\p_xp=0\quad & |x|=r(t),\\
        \text{$c$ and $\p_x c$ are continuous}  \quad &|x|= r(t).
    \end{dcases}
\end{equation}

It turns out that solving \eqref{eq:proliferation-rim}-\eqref{eq:necrotic-core} will lead to a nonlinear equation for $r(t)$, which either has a unique positive solution $r(t)$ or none. The latter case indicates that there is no necrotic core at time $t$. And we can obtain the solution $p$ and $c$ by solving \eqref{eq:proliferation-rim} but on the whole domain $\Omega(t)=\{|x|<R(t)\}$ 
\begin{equation}\label{eq:good}
    \begin{dcases}
                -\p_{xx}c+\lambda c=0,\quad &|x|<R(t),\\
        -\p_{xx}p=G(c),\quad &|x|<R(t),\\
        p=0,\quad &|x|=R(t),\\
        c=c_B,\quad &|x|= R(t),
    \end{dcases}
\end{equation} whose solution $p$ will be non-negative when we can't find a $r(t)\geq0$ for the first case.

\begin{remark}
    Note that while it may be unclear a priori whether we can find a $r>0$ such that \eqref{eq:proliferation-rim}-\eqref{eq:necrotic-core} can be solved, we can actually prove such $r$ is unique, if exists, by some comparison arguments without resorting to the specific form of $G(c)$ in \eqref{specify-G}. This is Proposition \ref{prop:apriori-unique} to be given in Section \ref{sec:3.4}.
\end{remark}

\paragraph{Main results} Investigating the case with fixed $\Omega(t)$, where $R(t)=R$ treated as a constant, leads to our first main result. We characterize the onset and the structure of the necrotic core as follows.
\begin{theorem}\label{thm:3.1}
    There exist constants $0<R_0<R_1<+\infty$, to be defined as roots to explicit equations \eqref{def:R0} and \eqref{def:R1} respectively, such that solutions to equations \eqref{eq:proliferation-rim}-\eqref{eq:good}  with $R(t)=R$ satisfy the following.
    \begin{enumerate}
        \item When $R>R_1$, there exists a necrotic core $\Lambda(t)=[-r,r]$ with $r>0$. Moreover, $r$ is strictly increasing with respect to $R$ with $r(R_1+)=0$. Furthermore, we have a strict inclusion between the necrotic core and the region where $G(c)<0$
        \begin{equation}\label{eq:inclusion}
            \Lambda(t)=\{x\in(-R,R):\, p=0\}\subsetneq\, \{x\in(-R,R):\,G(c(x,t))<0\}.
        \end{equation}
        \item When $R_1>R>R_0$, there is no necrotic core but at the center of the domain (a neighborhood of $x=0$) the nutrient level is less than the critical threshold in \eqref{specify-G} $c<\bar{c}$ and therefore $G(c)<0$. 
        \item When $R_0> R>0$, there is no necrotic core and $c> \bar{c}$ and therefore $G(c)>0$ for all $x\in(-R,R)$.
    \end{enumerate}
    For borderline cases, we have: when $R=R_1$, $p=0$ at $x=0$  but $p>0$ for all $0<|x|<R_1$; when $R=R_0$, $c=\bar{c}$ at $x=0$ but $c>\bar{c}$ for all $0<|x|<R_0$.
\end{theorem}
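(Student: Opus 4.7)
The plan is to analyze \eqref{eq:proliferation-rim}--\eqref{eq:good} by trying two competing ansatze---``no necrotic core'' and ``core $[-r,r]$ with $r>0$''---and determining the ranges of $R$ on which each yields an admissible (non-negative) pressure. The critical values at which one ansatz breaks and the other takes over will be exactly $R_0$ and $R_1$.

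\textbf{No-core regime.} Under $\Lambda(t)=\emptyset$ the nutrient equation in \eqref{eq:good} decouples with $n\equiv 1$, giving $c(x)=c_B\cosh(\sqrt\lambda x)/\cosh(\sqrt\lambda R)$. The minimum $c(0)=c_B/\cosh(\sqrt\lambda R)$ is strictly decreasing in $R$, so $R_0$ is defined uniquely by $\cosh(\sqrt\lambda R_0)=c_B/\bar c$, which is \eqref{def:R0}. For $R<R_0$ one has $c>\bar c$ throughout, $G(c)\equiv g_+$, and $p(x)=g_+(R^2-x^2)/2$ is manifestly positive, establishing case~3. For $R\ge R_0$, let $x_*\in[0,R)$ be the unique point where $c(x_*)=\bar c$; integrating $-p''=G(c)$ piecewise with $p(\pm R)=0$ and $p'(0)=0$ yields
\[
h(R):=p(0)=\tfrac{g_+}{2}(R-x_*)^2-\tfrac{g_-}{2}x_*(2R-x_*).
\]
One has $h(R_0)=g_+R_0^2/2>0$, while $h(R)\to-\infty$ as $R\to\infty$ because $R-x_*\to\ln(c_B/\bar c)/\sqrt\lambda$ stays bounded but $x_*(2R-x_*)\to\infty$; furthermore $h$ is strictly decreasing, since a short implicit-differentiation argument using $\bar c<c_B$ gives $dx_*/dR>1$ and hence $h'(R)<-g_-R<0$. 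Thus $R_1>R_0$ is defined uniquely by $h(R_1)=0$, which is \eqref{def:R1}. For $R_0<R<R_1$ the computed $p$ attains its minimum $h(R)>0$ at the origin and is positive elsewhere on $(-R,R)$, establishing case~2; the borderline statements at $R=R_0$ (where $x_*=0$) and $R=R_1$ (where $h=0$) drop out of the same formulas.

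\textbf{Core regime.} For $R>R_1$ the no-core ansatz gives $p(0)=h(R)<0$ and is inadmissible; instead I would solve \eqref{eq:proliferation-rim}--\eqref{eq:necrotic-core} under \eqref{anstaz-lambda} with $r>0$. The nutrient is obtained piecewise, with coefficient $\lambda n_c$ inside $[-r,r]$ and $\lambda$ outside, matched $C^1$ across $|x|=r$ and with $c(R)=c_B$, giving $c$ explicitly parametrized by $r$. On $[r,R]$ the three boundary conditions $p(r)=p'(r)=p(R)=0$ overdetermine $-p''=G(c)$ and produce the algebraic relation $g_+(R-x_*)^2=g_-(x_*-r)(2R-x_*-r)$, structurally the same as in the no-core step but now with $r>0$. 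Coupled with $c(x_*)=\bar c$---a transcendental equation in $(r,x_*)$ built from $\cosh,\sinh,\tanh$---this closes a system defining $(r(R),x_*(R))$ for $R>R_1$, and as $R\to R_1^+$ it reduces to the no-core computation at $R=R_1$, so $r(R_1+)=0$. Uniqueness is supplied by Proposition~\ref{prop:apriori-unique}. The strict inclusion \eqref{eq:inclusion} falls out immediately from the algebraic relation: $x_*=r$ would force $g_+(R-x_*)^2=0$, i.e.\ $R=x_*=r$, contradicting $r<R$; therefore $x_*>r$ strictly, and by continuity of $c$ the set $\{|x|<x_*\}=\{G(c)<0\}$ strictly contains $\Lambda=[-r,r]$.

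\textbf{Main obstacle.} The most delicate step is strict monotonicity $r'(R)>0$ throughout $(R_1,\infty)$. Local monotonicity at $R=R_1^+$ comes from the implicit function theorem applied at the bifurcation point $r=0$ in the coupled system. To propagate the sign globally I would differentiate the coupled transcendental system implicitly, reducing the sign of $r'(R)$ to that of an explicit $2\times 2$ Jacobian determinant whose entries are combinations of hyperbolic functions in $(r,x_*,R)$; Proposition~\ref{prop:apriori-unique} rules out folds in the branch, so the local sign extends globally by continuity.
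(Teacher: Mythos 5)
Your no-core analysis is correct and follows the same route as the paper: the same $R_0$, and $R_1$ characterized by $p(0)=0$, which after eliminating $x_*$ via $g_+(R-x_*)^2=g_-x_*(2R-x_*)$ gives exactly $x_*=(1-\alpha)R$ and hence \eqref{def:R1}. In fact your treatment of Case 2 is slightly more complete than the paper's: you prove $h(R)=p(0)$ is strictly decreasing (the computation $dx_*/dR=\tanh(\sqrt{\lambda}R)/\tanh(\sqrt{\lambda}x_*)>1$ and $h'(R)<-g_-R$ checks out), whereas the paper only asserts that the additive constant $C(R)$ in \eqref{p-expression-case2} ``is expected'' to be positive on $(R_0,R_1)$. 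The derivation of the rim relation $g_+(R-x_*)^2=g_-(x_*-r)(2R-x_*-r)$, i.e. $x_*-r=(1-\alpha)\bar R$, and the resulting strict inclusion \eqref{eq:inclusion} also match the paper.

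The genuine gap is in the core regime, which is the heart of the theorem. You assert that the coupled transcendental system ``closes a system defining $(r(R),x_*(R))$ for $R>R_1$'' without proving that a solution with $r>0$ exists for \emph{every} $R>R_1$, and you defer the global strict monotonicity $r'(R)>0$ to an uncomputed $2\times 2$ Jacobian plus a continuation argument. A continuation from the bifurcation point $r=0$ at $R=R_1$ still needs an a priori bound preventing the branch from terminating or from $r$ blowing up at finite $R$; as written this is not supplied. The paper closes both points at once with a cleaner scalar reduction: writing $\beta=\sqrt{n_c}\tanh(\sqrt{\lambda}r)$, it first shows (Lemma \ref{lem:solve-fixed-beta}, importing the uni-solvability from \cite{perthame2014traveling}) that for each fixed $\beta$ the rim equation has a unique root $\bar R=f(\beta)$ with $f(0)=R_1$ and $f$ decreasing, and then reduces the full system to the single scalar equation $h(r):=r+f(\sqrt{n_c}\tanh(\sqrt{\lambda}r))=R$. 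Injectivity of $h$ follows from the comparison-based uniqueness (Proposition \ref{prop:apriori-unique}), and together with $h(0)=R_1$ and $h(+\infty)=+\infty$ this forces $h$ to be a strictly increasing bijection of $(0,\infty)$ onto $(R_1,\infty)$ --- giving existence for all $R>R_1$, uniqueness, $r(R_1+)=0$, and strict monotonicity of $r(R)$ simultaneously, with no Jacobian computation. You should either carry out your Jacobian/continuation program in full (including the no-escape bound) or adopt this scalar reduction; as it stands the existence and monotonicity claims in part 1 are not established. A minor further point: in the core regime you should also verify that $c$ is strictly increasing on the rim (the paper checks condition \eqref{eq:42} via \eqref{eq:47}), which is needed to identify $\{G(c)<0\}$ with $\{|x|<x_*\}$ in \eqref{eq:inclusion}.
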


Intuitively, as the tumor radius $R$ increases, it becomes more difficult for cells at the tumor center to access sufficient nutrients, thus rendering the formation of a necrotic core more likely.

An intriguing phenomenon is that lack of nutrients ($c<\bar{c}$) does not necessarily lead to the formation of a necrotic core. One interpretation might be that the death of cells can be compensated by the neighboring cells, which allows to maintain a positive pressure. This marks a difference compared to some free boundary models for the necrotic core in literature, see the discussion in Section \ref{sec:conclusion}. 

Theorem \ref{thm:3.1} will be proved in Section \ref{sec:3.2}, along which we will derive the expressions for $p$ and $c$.

Next, we can compute the boundary moving speed via \eqref{Darcy-incompresible-revisited}, applying here as 
\begin{equation}\label{Darcy-analytical}
    \frac{d}{dt}R(t)=-\p_xp(R(t),t).
\end{equation} Our second main result (stated below in a formal way) is on the long time behavior.
\begin{theorem}\label{thm:3.2}
In the long time limit, the solution to equations \eqref{eq:proliferation-rim}-\eqref{eq:good} converges to the traveling wave solution constructed in \cite[Section 3.3]{perthame2014traveling}.
\end{theorem}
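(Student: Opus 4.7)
The plan is to reduce the long-time asymptotics to a scalar dynamical system for the tumor radius $R(t)$ and then identify its asymptotic state with the traveling wave of \cite[Section 3.3]{perthame2014traveling}. Thanks to Theorem \ref{thm:3.1}, for each fixed $R$ the stationary elliptic system \eqref{eq:proliferation-rim}--\eqref{eq:good} admits a semi-explicit solution $(p,c)$ built from $\cosh$ and $\sinh$ in each of the two constitutive regimes of $G$. Feeding this into the boundary-motion law \eqref{Darcy-analytical} reduces the full free-boundary problem to a scalar autonomous ODE
\begin{equation*}
    \dot R(t) = F(R(t)), \qquad F(R) := -\partial_x p(R;R),
\end{equation*}
where $p(\cdot;R)$ denotes the stationary pressure on $(-R,R)$. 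The first task is to write $F$ explicitly on the three intervals $(0,R_0)$, $(R_0,R_1)$ and $(R_1,\infty)$ identified in Theorem \ref{thm:3.1}, and to verify that $F$ is continuous and strictly positive.

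Since $F>0$, we have $R(t)\nearrow\infty$. To extract the wave speed, note that for $R\gg R_1$ the symmetry condition $\partial_x c(0,t)=0$, propagated through the core equation $-c''+\lambda n_c c=0$, forces $\partial_x c(r,t)=\sqrt{\lambda n_c}\tanh(\sqrt{\lambda n_c}\,r(t))\,c(r,t)$, and the $\tanh$ factor saturates to $1$ exponentially fast. Hence, up to errors of order $e^{-2\sqrt{\lambda n_c}\,r}$, the nonlinear system that determines $r=r(R)$ decouples into a single algebraic equation for the rim thickness $L:=R-r$, whose unique positive root $L^*$ is precisely the rim width of the Perthame traveling wave. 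A continuity argument then yields $R(t)-r(t)\to L^*$ and $F(R(t))\to V^*$ as $t\to\infty$, where $V^*>0$ is the traveling-wave speed, and in particular $R(t)/t\to V^*$.

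To upgrade this into convergence of the full profile, I introduce the moving-frame variable $\xi:=R(t)-x$ and set $P(\xi,t):=p(R(t)-\xi,t)$, $C(\xi,t):=c(R(t)-\xi,t)$. By the semi-explicit formulas, on each of the two regions $0<\xi<R(t)-r(t)$ and $R(t)-r(t)<\xi<R(t)$ the pair $(P,C)$ is a linear combination of exponentials whose coefficients depend continuously on $R(t)$ and $r(t)$. Passing to the limit $R(t)\to\infty$ and $R(t)-r(t)\to L^*$ gives locally uniform convergence in $\xi$ of $(P(\cdot,t),C(\cdot,t))$ to a stationary profile $(P^*,C^*)$ defined on $[0,\infty)$. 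By construction this limit solves the traveling-wave ODEs with speed $V^*$, satisfies the Dirichlet data $P^*(0)=0$, $C^*(0)=c_B$, the contact conditions $P^*(L^*)=P^{*\prime}(L^*)=0$, and the boundedness/decay condition as $\xi\to\infty$ prescribed in \cite[Section 3.3]{perthame2014traveling}; uniqueness of that traveling wave then identifies the limit.

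The main obstacle is the decoupling/attractivity step yielding $R(t)-r(t)\to L^*$. The equation relating $r$ to $R$ is implicit and couples the matching conditions for $c$ and $\partial_x c$ at $x=r$ with the contact condition $\partial_x p(r;R)=0$. Showing that $L^*$ is attracting for the auxiliary dynamics of $L(t):=R(t)-r(t)$ requires a non-degeneracy check on the linearization at $L=L^*$; concretely, one must verify that the implicit Jacobian has a negative eigenvalue so that small perturbations of the rim thickness decay. Once this is established, the remaining arguments amount to continuous-dependence statements for finite-dimensional exponential families and reduce to routine estimates.
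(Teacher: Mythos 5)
Your overall strategy is the same as the paper's: reduce the free boundary dynamics to the scalar autonomous ODE $\dot R=F(R)$ via the quasi-static profiles, show $F>0$ so that $R(t)\to\infty$, identify the limiting rim width with that of the Perthame--Quir\'os--V\'azquez wave through the saturation $\tanh(\sqrt{\lambda n_c}\,r)\to 1$, and then pass to the limit in the translated profiles. The one substantive divergence is your final paragraph, where you declare the ``main obstacle'' to be an attractivity/linearization analysis for the rim thickness $L(t)=R(t)-r(t)$, requiring a negative eigenvalue of an implicit Jacobian. This step is not needed, and leaving it open creates a gap that the model's structure already closes: because the elliptic problem is quasi-static, $L=\bar R$ is not an independent dynamical variable with its own evolution to be stabilized; it is uniquely slaved to $R$ by Proposition \ref{Prop:solve-system}, and Proposition \ref{prop:property-r-barR} gives $\bar R(R)=f\bigl(\sqrt{n_c}\tanh(\sqrt{\lambda}\,r(R))\bigr)$ monotone decreasing with $\bar R(R)\to f(\sqrt{n_c})=\bar R_*>0$ as $R\to\infty$. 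Hence $R(t)\to\infty$ immediately yields $\bar R(t)\to\bar R_*$ by composition of limits, with no spectral condition to verify. (In fact the paper uses the lower bound $\bar R(t)\ge f(\sqrt{n_c})>0$ in $\dot R=(\sqrt{(g_++g_-)g_-}-g_-)\bar R(t)$ to get a uniform positive speed, which is slightly cleaner than arguing from $F>0$ alone.) Your moving-frame profile convergence is a somewhat more explicit version of what the paper states informally, and is fine; if you replace the attractivity discussion by a direct appeal to Proposition \ref{prop:property-r-barR}, the proof is complete and matches the paper's.
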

The exact meaning of the convergence and the proof to Theorem \ref{thm:3.2} will be provided in Section \ref{sec:3.3}. See Figure \ref{fig:dynamics} for an illustration. Before giving the calculations and proofs, we note that the following lemma is useful for this section, whose proof is elementary and is thus omitted. 
\begin{lemma}\label{lem:hyper}
    For $\mu\in(0,1)$, we have
    \begin{enumerate}
        \item $\frac{\cosh(\mu x)}{\cosh (x)}$ is strictly decreasing on $(0,+\infty)$ which equals $1$ at $x=0$ and equals $0$ at  $x=+\infty$.
        \item  $\frac{\sinh(\mu x)}{\sinh (x)}$ is also strictly decreasing on $(0,+\infty)$ which equals $\mu$ at $x=0^+$ and equals $0$ at  $x=+\infty$.
        \item $\frac{\sinh(\mu x)}{\sinh (x)}<\frac{\cosh(\mu x)}{\cosh (x)}$ for all $x>0$.
    \end{enumerate}
\end{lemma}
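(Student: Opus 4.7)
The plan is to treat each of the three claims by direct manipulation of the standard identities for $\sinh$ and $\cosh$; all three reductions are elementary. The boundary values I would handle first: $\cosh(0)=1$ gives the value $1$ in part 1, $\sinh(\mu x)/\sinh(x)\to \mu$ at $x=0^{+}$ in part 2 follows from $\sinh(y)/y\to 1$, and the asymptotics $\sinh(x),\cosh(x)\sim \tfrac{1}{2}e^{x}$ as $x\to +\infty$ make both ratios behave like $e^{(\mu-1)x}\to 0$ since $\mu<1$.

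For strict monotonicity in part 1, I would take the logarithmic derivative of $\cosh(\mu x)/\cosh(x)$, which is $\mu\tanh(\mu x)-\tanh(x)$. After clearing the positive denominator $\cosh(x)\cosh(\mu x)$, the sign is governed by $\sinh(x)\cosh(\mu x)-\mu\sinh(\mu x)\cosh(x)$. I would rewrite this as
\[
\bigl[\sinh(x)\cosh(\mu x)-\sinh(\mu x)\cosh(x)\bigr]+(1-\mu)\sinh(\mu x)\cosh(x)=\sinh((1-\mu)x)+(1-\mu)\sinh(\mu x)\cosh(x),
\]
which is a sum of strictly positive terms for $x>0$. Hence the log-derivative is strictly negative and the ratio is strictly decreasing.

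For part 2, the same log-derivative strategy gives the sign of $\mu\coth(\mu x)-\coth(x)$, reducing the claim to
\[
\Phi(x):=\cosh(x)\sinh(\mu x)-\mu\cosh(\mu x)\sinh(x)>0,\qquad x>0.
\]
The exact splitting used in part 1 would produce a term of the wrong sign here, so I would instead regroup to
\[
\Phi(x)=(1-\mu)\cosh(\mu x)\sinh(x)-\sinh((1-\mu)x),
\]
note $\Phi(0)=0$, and compute $\Phi'(x)$. Applying the product-to-sum identity $2\cosh(\mu x)\cosh(x)=\cosh((1+\mu)x)+\cosh((1-\mu)x)$ together with $\cosh(A)-\cosh(B)=2\sinh(\tfrac{A+B}{2})\sinh(\tfrac{A-B}{2})$ collapses $\Phi'(x)$ to $(1-\mu^{2})\sinh(x)\sinh(\mu x)$, which is strictly positive on $(0,+\infty)$. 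Hence $\Phi>0$ there, completing the monotonicity.

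Part 3 is then immediate: multiplying the desired inequality by the positive quantity $\sinh(x)\cosh(x)$ reduces it to $\sinh(\mu x)\cosh(x)-\cosh(\mu x)\sinh(x)=\sinh((\mu-1)x)<0$, which holds for $x>0$ since $\mu-1<0$. The only mild obstacle is spotting the correct regrouping in part 2 so that $\Phi'$ telescopes after the product-to-sum step; parts 1 and 3 are immediate from the addition formulas.
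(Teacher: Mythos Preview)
Your argument is correct in all three parts; the log-derivative reductions, the regrouping of $\Phi$ in part~2, and the collapse of $\Phi'(x)$ to $(1-\mu^{2})\sinh(x)\sinh(\mu x)$ all check out. The paper itself omits the proof entirely as elementary, so there is nothing to compare against; your write-up is exactly the kind of direct verification the authors had in mind.
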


\begin{figure}[htbp]
    \centering
    \includegraphics[width=0.9\textwidth]{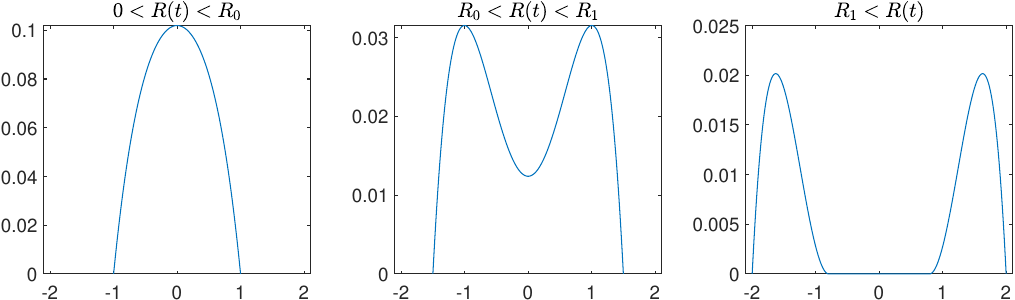}
    \caption{Schematic plot pressure profiles in three phases. From left to right (corresponding to Case 3 -- Case 1), as the tumor radius increases, the nutrient becomes insufficient in the center, and finally the necrotic core forms.}
    \label{fig:pressure}
\end{figure}
\subsection{Onset and structure of the necrotic core}\label{sec:3.2}

In this section, we work out profiles of $p$ and $c$ with a given $\Omega(t)=(-R(t),R(t))$, covering  all the three cases indicated in Theorem \ref{thm:3.1}

\begin{itemize}
    \item Case 1 $R(t)>R_1$. The nutrient is insufficient and the necrotic core forms.
    \item Case 2 $R_1\geq R(t)>R_0$. The nutrient starts to be insufficient while the necrotic core has not formed.
    \item Case 3 $R_0\geq R(t)>0$. The nutrient is sufficient.
\end{itemize}
We will go from Case 1 to Case 3, starting with the most complicated Case 1. See Figure \ref{fig:pressure} for an illustration.

\paragraph{Definition of $R_0,\,R_1$} Before that, we define the two critical radii, whose meanings will be clear in later calculations. $R_0$ is the unique real number such that
\begin{equation}\label{def:R0}
     \frac{1}{\cosh (\sqrt{\lambda}R_0)}=\frac{\bar{c}}{c_B},\qquad R_0>0.
\end{equation} 

$R_1$ is defined to be the unique root of
\begin{equation}\label{def:R1}
    \frac{\cosh (\sqrt{\lambda}(1-\alpha)R_1)}{\cosh (\sqrt{\lambda} R_1)}=\frac{\bar{c}}{c_B},\qquad R_1>0,
\end{equation} where $\alpha$ is a constant given by
\begin{equation}\label{def:alpha}
    \alpha=\sqrt{\frac{g_-}{g_++g_-}}\in(0,1).
\end{equation} 
It is clear that this definition is proper in light of Lemma \ref{lem:hyper}.

We note that $R_0<R_1$ follows from
\begin{equation}
    \frac{1}{\cosh (\sqrt{\lambda}R_0)}=\frac{\cosh (\sqrt{\lambda}(1-\alpha)R_1)}{\cosh (\sqrt{\lambda} R_1)}>\frac{1}{\cosh (\sqrt{\lambda} R_1)},
\end{equation} by definition \eqref{def:R0} and \eqref{def:R1}.

\subsubsection{Case 1: $R>R_1$}\label{sec:321}

For $R(t)>R_1$, we aim to find $r(t)>0$ for which we can solve for $p$ and $c$ from \eqref{eq:proliferation-rim}-\eqref{eq:necrotic-core}. We introduce
\begin{equation}
    \bar{R}(t)=R(t)-r(t),
\end{equation} which is the width of the proliferation rim $N(t)=(-R(t),-r(t))\cup(r(t),R(t))$. For simplicity, we hide the dependence on $t$ hereinafter as $t$ is fixed.
\paragraph{Translation of the domain} By symmetry it suffices to focus on $x\in[0,R]$. We further translate the domain from $(0,R)$ to $(-r,R-r)=(-r,\bar{R})$. The following derivation shares many similarities with that for the traveling wave in \cite[Section 3.3]{perthame2014traveling}, which can be viewed as the case $r=\infty$. The latter point will be elaborated more in Section \ref{sec:3.3}.

    \paragraph{On $(0,\bar{R})$} On the proliferation rim $(0,\bar{R})$, the system is governed by (the translation version of) \eqref{eq:proliferation-rim}.  First we solve the equation for $c$ to get 
\begin{equation}\label{c-0-barR}
c(x)=c_{B} \cosh (\sqrt{\lambda}(x-\bar{R}))+c_{R}^{\prime}\sinh (\sqrt{\lambda}(x-\bar{R})),\quad x\in[0,\bar{R}],
\end{equation}with a parameter $c_R$ to be determined. The solution is strictly increasing and positive provided
\begin{equation}\label{eq:42}
c_{B} \tanh (\sqrt{\lambda} \bar{R}) < c_{R}^{\prime}<\frac{c_{B}}{\tanh (\sqrt{\lambda} \bar{R})}.
\end{equation}

As $G(c)$ is in the piecewise constant form \eqref{specify-G}, we want to determine when $c$ is above or below the critical threshold $\bar{c}$. Suppose $c(x_1)=\bar{c}$, i.e.,
\begin{equation}\label{eq:barc43}c_{B} \cosh \left(\sqrt{\lambda}\left(x_{1}-R\right)\right)+c_{R}^{\prime} \sinh \left(\sqrt{\lambda}\left(x_{1}-R\right)\right)=\bar{c}\end{equation}
Then $c(x)<\bar{c}$ for $x\in(0,x_1)$ and $c(x)>\bar{c}$ for $x\in(x_1,\bar{R})$, as $c$ is strictly increasing on $(0,\bar{R})$ provided that \eqref{eq:42} holds.

Then by $-\p_{xx}p=G(c)$ we have $-\p_{xx}p=-g_-$ on $(0,x_1)$ and $-\p_{xx}p=g_+$ on $(x_1,\bar{R})$. Solve this with the interface condition $p(0)=\p_xp(0)=0$ in \eqref{eq:necrotic-core} (note that $r(t)$ is translated to $0$) and the $C^1$ continuity at $x_1$, and we obtain
\begin{equation}\label{p-0-barR}
p(x)=\begin{dcases}
\frac{g_-}{2}x^2,\ x\in(0,x_1),\\
-\frac{g_+}{2}(x-x_1)^2+g_-x_1(x-x_1)+\frac{x_1^2}{2}g_-,\ x\in(x_1,\bar{R}).
\end{dcases}
\end{equation}Then the boundary condition $p(\bar{R})=0$ leads to an equation for $x_1$
\begin{equation}\label{eq:44px1}
-\frac{g_+}{2}(\bar{R}-x_1)^2+g_-x_1(\bar{R}-x_1)+\frac{x_1^2}{2}g_-=0,
\end{equation}  solving which we obtain
\begin{equation}\label{expression-x1}
    x_1=(1-\sqrt{\frac{g_-}{g_++g_-}})\bar{R}=(1-\alpha)\bar{R},
\end{equation} where $\alpha=\sqrt{\frac{g_-}{g_++g_-}}$ as defined in \eqref{def:alpha}.
At this stage, plugging \eqref{expression-x1} into \eqref{eq:42} leads to one equation but with two unknowns -- $c_R^{\prime}$ and $\bar{R}$. To proceed, we move on to the necrotic core part $(-r,0)$.

\paragraph{On $(-r,0)$.}   First we solve for $c$, using the equation $-\p_{xx}c+\lambda n_c c=0$ and the values at $x=0$ given by \eqref{c-0-barR} thanks to the $C^1$ continuity, which gives
\begin{equation}\label{c-core}
c(x)=c_0\cosh(\xi x)+\frac{c_0^{\prime}}{\xi}\sinh(\xi x), \quad x\in(-r,0),
\end{equation} where $\xi=\sqrt{\lambda n_{c}}$ is a constant, and $c_0$, $c_0^{\prime}$ are values of $c(0),\p_xc(0)$ from \eqref{c-0-barR}, given by
\begin{equation}\label{def-c0c0'}
    \begin{aligned}
        c_0&=c_B\cosh (\sqrt{\lambda} \bar{R})-c_R^{\prime}\sinh(\sqrt{\lambda} \bar{R}),\\
        c_0^{\prime}&=\sqrt{\lambda}(-c_B\sinh (\sqrt{\lambda}\bar{R})+c_R^{\prime}\cosh(\sqrt{\lambda} \bar{R})).
    \end{aligned}
\end{equation}
Note that we have a symmetric condition $\p_xc(-r)=0$, which follows from the radial symmetric setting before the translation, and that $-r$ is the translated origin. Applying $\p_xc(-r)=0$ to \eqref{c-core} we obtain
\begin{equation}\label{eq:46}
c_0-\frac{c_0'}{\xi \tanh (\sqrt{\lambda}r)}=0.
\end{equation}Combining \eqref{def-c0c0'} with \eqref{eq:46}, we get a relationship between $c_B$ and $c_R^{\prime}$, recalling that $\xi=\sqrt{\lambda n_{c}}$
\begin{equation}\label{eq:47}
c_{R}^{\prime}=c_{B} \frac{\beta\cosh (\sqrt{\lambda} \bar{R})+ \sinh (\sqrt{\lambda} \bar{R})}{\beta \sinh (\sqrt{\lambda} \bar{R})+ \cosh (\sqrt{\lambda} \bar{R})}, 
\end{equation} where
\begin{equation}\label{def:beta}
    \beta:=\sqrt{n_c}\tanh (\sqrt{\lambda}r)\in (0,\sqrt{n_c})\subset(0,1),
\end{equation} is an unknown depending on $r$. 

In particular, \eqref{eq:47} ensures the condition \eqref{eq:42}, as
\begin{equation*}
    \tanh (\sqrt{\lambda} \bar{R})<\frac{c_{R}^{\prime}}{c_{B}}=\frac{\beta+ \tanh (\sqrt{\lambda} \bar{R})}{\beta \tanh (\sqrt{\lambda} \bar{R})+ 1}<\frac{1}{\tanh (\sqrt{\lambda} \bar{R})},
\end{equation*} since $\tanh (\sqrt{\lambda}r)\in(0,1)$.

Finally, with all materials gathered, we substitute \eqref{expression-x1} into \eqref{eq:barc43}, and plug in \eqref{eq:47}, which leads to a nonlinear equation for $\bar{R}$
\begin{align}\label{tmp-main-eq}
\mathcal{F}(\bar{R},\beta)=0,
\end{align} where the transcendental function $\mathcal{F}(\bar{R},\beta)$ is defined by 
\begin{equation}\label{def:F}
    \mathcal{F}(\bar{R},\beta):=\frac{\bar{c}}{c_{B}}\Bigl(\cosh (\sqrt{\lambda} \bar{R})+\beta \sinh (\sqrt{\lambda} \bar{R})\Bigr)-\Bigl(\cosh (\sqrt{\lambda}(1-\alpha) \bar{R})+\beta \sinh (\sqrt{\lambda}(1-\alpha) \bar{R})\Bigr).
\end{equation} Note that by \eqref{def:beta}, $\beta$ depends on $\bar{R}$ through $r=R-\bar{R}$.

\paragraph{A nonlinear system} We have reduced the problem of finding the necrotic core to solving a set of nonlinear equations (system) \eqref{def:beta}-\eqref{tmp-main-eq}, summarized as
\begin{align}\label{main-eq}
    &\mathcal{F}(\bar{R},\beta)=0,\\
    &\beta=\sqrt{n_c}\tanh (\sqrt{\lambda}r),\label{recast-def-beta}\\
    &r+\bar{R}=R,\label{recast-relation-r}
\end{align} where $\mathcal{F}(\bar{R},\beta)$ is defined in \eqref{def:F}. Our investigations rely on that the solvabilty for \eqref{main-eq} has been established in \cite{perthame2014traveling} when $\beta$ is fixed.
\begin{lemma}\label{lem:solve-fixed-beta}
    For a fixed $\beta\geq0$, \eqref{main-eq} has a unique root $\bar{R}$ on $(0,+\infty)$, denoted as $f(\beta)$. Moreover, we have 
    \begin{enumerate}
        \item $f(0)=R_1$, where $R_1$ is the critical radius defined as in \eqref{def:R1}.
        \item $f(\beta)$ is strictly decreasing with respect to $\beta$. 
    \end{enumerate}
\end{lemma}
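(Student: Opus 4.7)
The plan is to reformulate $\mathcal{F}(\bar R,\beta)=0$ as a level-set equation and combine a monotonicity argument with implicit differentiation. First, set $A(\bar R,\beta):=\cosh(\sqrt\lambda\bar R)+\beta\sinh(\sqrt\lambda\bar R)$ and $B(\bar R,\beta):=\cosh(\sqrt\lambda(1-\alpha)\bar R)+\beta\sinh(\sqrt\lambda(1-\alpha)\bar R)$; both are strictly positive on $\bar R>0$, $\beta\geq 0$, so $\mathcal{F}=0$ is equivalent to $H(\bar R,\beta):=B/A=\bar c/c_B$. Since $\bar c/c_B\in(0,1)$, existence and uniqueness of the root reduce to showing that $H(\cdot,\beta)$ is continuous, strictly decreasing on $(0,+\infty)$, with $H(0^+,\beta)=1$ and $H(\bar R,\beta)\to 0$ as $\bar R\to+\infty$.

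The two limits are immediate from the explicit form. For strict monotonicity in $\bar R$, the plan is to compute
\begin{equation*}
\frac{d\log H}{d\bar R}=\sqrt\lambda\bigl[(1-\alpha)\psi(\sqrt\lambda(1-\alpha)\bar R)-\psi(\sqrt\lambda\bar R)\bigr],\qquad \psi(y):=\frac{\sinh y+\beta\cosh y}{\cosh y+\beta\sinh y},
\end{equation*}
and invoke the identity $\psi(y)=\tanh(y+\phi)$ with $\phi:=\arctanh\beta$, which is well-defined because $\beta\in[0,\sqrt{n_c})\subset[0,1)$ by \eqref{def:beta}. Then $\psi$ is strictly increasing on $(0,+\infty)$ with range contained in $(0,1)$, so the bound $(1-\alpha)\psi((1-\alpha)y)<(1-\alpha)\psi(y)<\psi(y)$ yields $d\log H/d\bar R<0$ throughout, completing existence and uniqueness of $f(\beta)$.

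Item 1 follows at once: plugging $\beta=0$ into $H=\bar c/c_B$ reproduces the defining equation \eqref{def:R1} for $R_1$, hence $f(0)=R_1$. For item 2, the plan is to apply implicit differentiation. At the root, using $\bar c/c_B=B/A$ to eliminate the ratio, the $\beta$-dependent cross terms in $\partial_\beta\mathcal{F}$ cancel via the identity $\sinh u\cosh v-\cosh u\sinh v=\sinh(u-v)$, leaving
\begin{equation*}
\partial_\beta\mathcal{F}\bigl|_{\mathrm{root}}=\frac{\sinh(\sqrt\lambda\alpha\bar R)}{A}>0.
\end{equation*}
Writing $\mathcal{F}=A(\bar c/c_B-H)$ also gives $\partial_{\bar R}\mathcal{F}\bigl|_{\mathrm{root}}=-A\,\partial_{\bar R}H>0$ by the monotonicity established above. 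Hence $f'(\beta)=-\partial_\beta\mathcal{F}/\partial_{\bar R}\mathcal{F}<0$.

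The main technical obstacle will be the strict monotonicity of $H$ in $\bar R$. Without the reformulation $\psi(y)=\tanh(y+\phi)$, the comparison $(1-\alpha)\psi((1-\alpha)y)<\psi(y)$ would require a less transparent direct computation; the identity packages the entire $\beta$-dependence into a single shift and reduces everything to the familiar monotonicity of $\tanh$ in the spirit of Lemma \ref{lem:hyper}.
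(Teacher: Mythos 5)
Your proof is correct, but it follows a genuinely different route from the paper's. The paper does not prove existence/uniqueness of the root itself; it cites the last part of the proof of Theorem 3.1 in \cite{perthame2014traveling}. You instead make this self-contained by rewriting $\mathcal{F}=0$ as $H(\bar R,\beta)=B/A=\bar c/c_B$ and showing $H(\cdot,\beta)$ decreases strictly from $1$ to $0$, the key device being the shift identity $\psi(y)=\tanh(y+\arctanh\beta)$, which is a clean quantitative refinement of Lemma \ref{lem:hyper}. For the monotonicity of $f$, the paper argues indirectly: it asserts $f$ is an injection, inverts the relation to $\beta=g(\bar R)$, locates the sign changes of $g$ relative to $R_{\sin}$ and $R_1$, and deduces that $g$ (hence $f$) is strictly decreasing from the endpoint values of a monotone inverse. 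Your implicit-differentiation argument is more direct: the cancellation $\partial_\beta\mathcal{F}\bigl|_{\mathrm{root}}=\sinh(\sqrt\lambda\alpha\bar R)/A>0$ checks out (the $\beta$-cross terms do cancel via $\sinh(u-v)$ after substituting $\bar c/c_B=B/A$), and combined with $\partial_{\bar R}\mathcal{F}\bigl|_{\mathrm{root}}=-A\,\partial_{\bar R}H>0$ it gives $f'<0$ without the soft injectivity step. What each buys: the paper's route avoids any differentiation but leans on an external reference and an unproven injectivity claim; yours is longer to write out but fully self-contained and yields a quantitative derivative.

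One scope caveat: the lemma is stated for all $\beta\geq 0$, while your use of $\arctanh\beta$ (and the bound $\psi\in(0,1)$) restricts to $\beta<1$. This is harmless for the paper, since every invocation has $\beta\in[0,\sqrt{n_c}]\subset[0,1)$ by \eqref{def:beta}, but if you want the lemma at full stated generality you would need to handle $\beta\geq 1$ separately (e.g.\ $\psi(y)=\coth(y+\arctanh(1/\beta))$ for $\beta>1$, where $\psi$ is decreasing and the comparison $(1-\alpha)\psi((1-\alpha)y)<\psi(y)$ requires a different estimate). It would be worth stating this restriction explicitly.
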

\begin{proof}[Proof of Lemma \ref{lem:solve-fixed-beta}]
    For the uni-solvability, we refer to the last part of the proof of \cite[Theorem 3.1]{perthame2014traveling}.

    Now we write $\mathcal{F}$ as
    \begin{equation}\label{F-rewrite}
         \mathcal{F}(\bar{R},\beta)=\frac{\bar{c}}{c_{B}}\cosh (\sqrt{\lambda} \bar{R})-\cosh (\sqrt{\lambda}(1-\alpha) \bar{R})+\beta\Bigl( \frac{\bar{c}}{c_{B}}\sinh (\sqrt{\lambda} \bar{R})- \sinh (\sqrt{\lambda}(1-\alpha) \bar{R})\Bigr).
    \end{equation}
    To see $f(0)=R_1$, we note that $\mathcal{F}(\bar{R},0)=0$ leads to the same equation as \eqref{def:R1}.

    It is direct to check $f:[0,+\infty)\rightarrow(0,+\infty)$ is an injection. We rewrite $\mathcal{F}(\bar{R},0)=0$ as
    \begin{equation}
        \beta=-\frac{\frac{\bar{c}}{c_{B}}\cosh (\sqrt{\lambda} \bar{R})-\cosh (\sqrt{\lambda}(1-\alpha) \bar{R})}{\frac{\bar{c}}{c_{B}}\sinh (\sqrt{\lambda} \bar{R})- \sinh (\sqrt{\lambda}(1-\alpha) \bar{R})}=: g(\bar{R}).
    \end{equation}By elementary properties of hyperbolic functions in Lemma \ref{lem:hyper}, we can show $g(\bar{R})<0$ for $\bar{R}>R_1$ and $g(\bar{R})>0$ for $\bar{R}\in (R_{\sin},R_1)$, where $R_{\sin}$ is the largest root of the denominator
    \begin{equation*}
        \frac{\bar{c}}{c_{B}}\sinh (\sqrt{\lambda} {R}_{\sin})- \sinh (\sqrt{\lambda}(1-\alpha) R_{\sin})=0.
    \end{equation*} ${R}_{\sin}$ is either zero or a positive number, depending on the relative magnitudes of $\frac{\bar{c}}{c_{B}}$ and $(1-\alpha)$. In either case we can show $R_{\sin}<R_1$ and $g(R_{\sin}+)=+\infty$. Hence $g$ on $(R_{\sin},R_1]$ is the inverse function for $f$ on $[0,+\infty)$. As $g(R_{\sin}+)>g(R_1)$ we know $g$ is strictly decreasing, which implies $f$ is also strictly decreasing. 

\end{proof}

Now we turn to the system \eqref{main-eq}-\eqref{recast-relation-r}.
\begin{proposition}\label{Prop:solve-system}
    Given $R>0$, the nonlinear system \eqref{main-eq}-\eqref{recast-relation-r} has a solution $(r,\bar{R},\beta)$ with $0<r<R$ if and only if $R>R_1$, with  $R_1$ defined in \eqref{def:R1}. Further, when $R>R_1$,  the solution is unique.
\end{proposition}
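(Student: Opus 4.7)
The plan is to reduce the three-variable system \eqref{main-eq}--\eqref{recast-relation-r} to a single scalar equation in $r$, and then handle existence, non-existence, and uniqueness simultaneously by establishing a strict sign property of the derivative at every zero. Using \eqref{recast-def-beta} and \eqref{recast-relation-r} to eliminate $\bar{R}$ and $\beta$, the system becomes $\Phi(r)=0$ on $r\in(0,R)$, where
$$
\Phi(r) := \mathcal{F}\Bigl(R-r,\ \sqrt{n_c}\,\tanh(\sqrt{\lambda}\,r)\Bigr).
$$

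First I would compute the boundary values. At $r=R$, $\Phi(R)=\mathcal{F}(0,\beta)=\bar{c}/c_B-1<0$ trivially. For $\Phi(0)=\mathcal{F}(R,0)$, I would rewrite $\mathcal{F}(\bar{R},0)/\cosh(\sqrt{\lambda}\bar{R})=\bar{c}/c_B-\cosh(\sqrt{\lambda}(1-\alpha)\bar{R})/\cosh(\sqrt{\lambda}\bar{R})$, which by Lemma \ref{lem:hyper} is strictly increasing in $\bar{R}$; since $R_1=f(0)$ is the unique root (Lemma \ref{lem:solve-fixed-beta}), this gives $\Phi(0)>0\iff R>R_1$, with equality at $R=R_1$.

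The heart of the argument will be the claim that $\Phi'(r_0)<0$ at every zero $r_0\in(0,R)$ of $\Phi$. To prove it, I would use the constraint $\mathcal{F}(\bar{R},\beta)=0$ itself to eliminate $\bar{c}/c_B$ from the partial derivatives of $\mathcal{F}$. The hyperbolic subtraction identity $\sinh A\cosh B-\cosh A\sinh B=\sinh(A-B)$ then simplifies the partials to
$$
\p_\beta\mathcal{F}=\frac{\sinh(\sqrt{\lambda}\alpha\bar{R})}{v},\qquad \frac{\p_{\bar{R}}\mathcal{F}}{\sqrt{\lambda}}=\frac{(1-\beta^2)\sinh(\sqrt{\lambda}\alpha\bar{R})+\alpha p v}{v},
$$
where $v=\cosh(\sqrt{\lambda}\bar{R})+\beta\sinh(\sqrt{\lambda}\bar{R})$ and $p=\sinh(\sqrt{\lambda}(1-\alpha)\bar{R})+\beta\cosh(\sqrt{\lambda}(1-\alpha)\bar{R})$. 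Combining with $\beta'(r)=\sqrt{\lambda}(n_c-\beta^2)/\sqrt{n_c}$ and regrouping, I expect to obtain the explicit identity
$$
\frac{v}{\sqrt{\lambda}}\Phi'(r_0)=-\alpha p v+\frac{(\sqrt{n_c}-1)(\sqrt{n_c}+\beta^2)}{\sqrt{n_c}}\sinh(\sqrt{\lambda}\alpha\bar{R}),
$$
in which both terms on the right are strictly negative because $n_c\in(0,1)$, yielding $\Phi'(r_0)<0$.

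Once the sign claim is established, the rest of the proposition will follow from elementary continuity arguments. For $R>R_1$, since $\Phi(0)>0>\Phi(R)$, IVT produces at least one zero; if two distinct zeros $r_1<r_2$ existed, the sign pattern forced by $\Phi'(r_{1,2})<0$ would require an additional zero $r_3\in(r_1,r_2)$ with $\Phi'(r_3)\ge 0$, contradicting the claim, so the zero is unique. For $R\le R_1$, the smallest hypothetical zero $r_0\in(0,R)$ would be reached by $\Phi$ from below (since $\Phi\le 0$ at $r=0$ and $\Phi<0$ just to the right of $0$, the latter following either from continuity when $R<R_1$ or from $\Phi'(0)<0$ when $R=R_1$), forcing $\Phi'(r_0)\ge 0$ and contradicting the claim. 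The main obstacle will be the algebraic simplification in the sign claim; the bookkeeping is nontrivial and its success hinges on the interplay between the hyperbolic addition identities and the condition $n_c<1$.
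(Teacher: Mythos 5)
Your proposal is correct, and the algebra in your key claim checks out: eliminating $\bar c/c_B$ at a zero of $\mathcal F$ via the subtraction identity does give $\p_\beta\mathcal F=\sinh(\sqrt{\lambda}\alpha\bar R)/v$ and $\p_{\bar R}\mathcal F/\sqrt{\lambda}=\bigl((1-\beta^2)\sinh(\sqrt{\lambda}\alpha\bar R)+\alpha p v\bigr)/v$, and combining with $\beta'(r)=\sqrt{\lambda}(n_c-\beta^2)/\sqrt{n_c}$ yields exactly your identity after the factorization $\tfrac{n_c-\beta^2}{\sqrt{n_c}}-(1-\beta^2)=\tfrac{(\sqrt{n_c}-1)(\sqrt{n_c}+\beta^2)}{\sqrt{n_c}}$; both terms are indeed strictly negative for $n_c\in(0,1)$ and $\bar R>0$. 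This is, however, a genuinely different route from the paper's. The paper reduces the system to the scalar fixed-point equation $h(r):=r+f(\sqrt{n_c}\tanh(\sqrt{\lambda}r))=R$ via the map $f$ of Lemma \ref{lem:solve-fixed-beta}, and then proves $h$ is injective \emph{indirectly}: it invokes the a priori uniqueness of the PDE system (Proposition \ref{prop:apriori-unique}, proved by maximum-principle comparison arguments for $c$ and the comparison principle for obstacle problems), concluding that the continuous injection $h$ with $h(0)=R_1$ and $h(+\infty)=+\infty$ must be strictly increasing. The authors even remark in a footnote that the direct hyperbolic computation was avoided because it "may lead to complicated expressions"; your calculation shows it is in fact tractable. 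What the paper's approach buys is robustness (Proposition \ref{prop:apriori-unique} works for any non-decreasing $G$, without the piecewise-constant structure) and reuse of a structurally meaningful comparison result. What yours buys is a self-contained, purely elementary argument that does not presuppose well-posedness of the elliptic system, plus the quantitative transversality $\Phi'<0$ at the root, which immediately gives smoothness and strict monotonicity of $R\mapsto r(R)$ by the implicit function theorem (information the paper again extracts indirectly in Proposition \ref{prop:property-r-barR}). The only cosmetic issues are the clash of your symbol $p$ with the pressure and the slightly informal phrasing of the two-zero exclusion, both easily repaired.
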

\begin{proof}[Proof]
    By the map $f$ in Lemma \ref{lem:solve-fixed-beta}, the first equation \eqref{main-eq} is equivalent to $\bar{R}=f(\beta)$. Then we can recast the nonlinear system as a fixed point problem in $r$
\begin{equation*}
    r \quad\rightarrow\quad \beta=\sqrt{n_c}\tanh (\sqrt{\lambda}r) \quad\rightarrow\quad \bar{R}=f(\beta) \quad\rightarrow\quad r=R-\bar{R},
\end{equation*} leading to
\begin{equation*}
    r=R-f(\sqrt{n_c}\tanh (\sqrt{\lambda}r)),
\end{equation*} which is equivalent to
\begin{equation}\label{def:h}
    h(r):=r+f(\sqrt{n_c}\tanh (\sqrt{\lambda}r))=R.
\end{equation}
Then for each $R$ in $h((0,+\infty))$, i.e. the range of $h$ on $(0,+\infty)$, we can find a $r>0$ such that \eqref{eq:proliferation-rim}-\eqref{eq:necrotic-core} admits solution $p$ and $c$, by previous derivation in this section. Therefore, by a priori uniqueness for \eqref{eq:proliferation-rim}-\eqref{eq:necrotic-core} in Proposition \ref{prop:apriori-unique} (in Section \ref{sec:3.4}), we deduce that $h$ is an injection. 

Noting that $h(0)=f(0)=R_1$ by Lemma \ref{lem:solve-fixed-beta} and $h(r)\rightarrow+\infty$ as $r\rightarrow+\infty$, we conclude that $h$ is strictly increasing. And we conclude that \eqref{def:h} has a solution $r>0$ if and only if $R>h(0)=R_1$.

\end{proof}In the last step of the proof, the strictly increasing property of $h$ may also be derived via explicitly working with $f$, which however may lead to complicated expressions involving hyperbolic functions. Hence here we take an indirect approach using the uniqueness in Proposition \ref{prop:apriori-unique}.

\begin{proposition}\label{prop:property-r-barR}
    Denote the solution $r$ obtained in Proposition \ref{Prop:solve-system} as $r(R)$ and $\bar{R}(R)=R-r(R)$ for $R\in (R_1,+\infty)$, then
    \begin{enumerate}
        \item $r(R)$ is strictly increasing with respect to $R$ with $r(R_1+)=0$.
        \item $\bar{R}$ is strictly decreasing with respect to $R$.
        \item  As $R\rightarrow+\infty$, $r$ goes to $+\infty$ and $\bar{R}$ goes to $f(\sqrt{n_c})>0$.
    \end{enumerate}
\end{proposition}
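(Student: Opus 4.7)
The plan is to build on the machinery established in the proof of Proposition \ref{Prop:solve-system}. There, the problem was reduced to the equation $h(r) = R$ with
\[
h(r) = r + f\bigl(\sqrt{n_c}\tanh(\sqrt{\lambda}r)\bigr),
\]
and $h$ was shown to be strictly increasing on $(0,+\infty)$ with $h(0)=R_1$. Since $h$ is continuous (composition of elementary functions with $f$, which is continuous as the inverse of the continuous strictly monotone map $g$ appearing in the proof of Lemma \ref{lem:solve-fixed-beta}), it is a homeomorphism onto its range. Thus $r(R)=h^{-1}(R)$ is well defined, continuous, and strictly increasing on $(R_1,+\infty)$, and $r(R)\to h^{-1}(R_1^+)=0$ as $R\to R_1^+$. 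This settles part 1.

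For part 2, I would use the identity
\[
\bar{R}(R) = R - r(R) = f\bigl(\sqrt{n_c}\tanh(\sqrt{\lambda}\,r(R))\bigr),
\]
which is just the first equation in the nonlinear system rewritten via $\bar{R}=f(\beta)$. Since $\tanh(\sqrt{\lambda}\,\cdot\,)$ is strictly increasing and $r(R)$ is strictly increasing in $R$ by part 1, the argument of $f$ is strictly increasing in $R$; combined with the strict monotonicity of $f$ (decreasing, by Lemma \ref{lem:solve-fixed-beta}), this gives that $\bar{R}(R)$ is strictly decreasing.

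For part 3, I would pass to the limit $R\to+\infty$ in $h(r(R))=R$. Because $h$ is continuous on $[0,+\infty)$ and bounded on any bounded interval (as $f$ takes values in the bounded set $(R_{\sin},R_1]$), the identity forces $r(R)\to+\infty$. Then $\sqrt{n_c}\tanh(\sqrt{\lambda}\,r(R))\to\sqrt{n_c}$, and by continuity of $f$,
\[
\bar{R}(R) = f\bigl(\sqrt{n_c}\tanh(\sqrt{\lambda}\,r(R))\bigr)\longrightarrow f(\sqrt{n_c}).
\]
Positivity of $f(\sqrt{n_c})$ is immediate because $f$ maps $[0,+\infty)$ into $(R_{\sin},R_1]\subset(0,+\infty)$, as established in the proof of Lemma \ref{lem:solve-fixed-beta}.

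I do not anticipate a truly hard step; the proof is essentially a bookkeeping exercise once the map $h$ and the function $f$ have been analyzed. The only point deserving some care is the continuity of $f$ at the endpoints of its domain, and the verification that $\sqrt{n_c}$ is indeed in the domain of $f$. The former follows since $f=g^{-1}$ with $g$ continuous and strictly monotone (Lemma \ref{lem:solve-fixed-beta}), and the latter follows because $\sqrt{n_c}\in(0,1)\subset[0,+\infty)$, recalling $n_c\in(0,1)$ from the model assumptions in \eqref{specify-psi}.
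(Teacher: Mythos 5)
Your proposal is correct and follows essentially the same route as the paper: part 1 from the strict monotonicity of $h$ with $h(0)=R_1$, part 2 from the identity $\bar{R}(R)=f\bigl(\sqrt{n_c}\tanh(\sqrt{\lambda}\,r(R))\bigr)$ together with the monotonicity of $f$ from Lemma \ref{lem:solve-fixed-beta}, and part 3 by passing to the limit in that identity after noting $r(R)\to+\infty$. The additional care you take with the continuity of $f$ and the boundedness of $f$'s range is a harmless elaboration of what the paper leaves implicit.
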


We expect that as the tumor grows the necrotic core becomes larger. But it is worth noting that the proliferating rim becomes \textit{smaller}  in time. Nevertheless, the limit $\bar{R}$ is still positive such that it can be connected to its counterpart in the traveling wave solution, as we will show in Section \ref{sec:3.3}.

\begin{proof}[Proof of Proposition \ref{prop:property-r-barR}]
    The first part follows from $r(R)$ is the inverse of the increasing function of $h(r)$ \eqref{def:h} with $h(0)=R_1$ established in the last part of the proof of Proposition \ref{Prop:solve-system}. And since $h(+\infty)=+\infty$, we know $r(R)$ also goes to infinity as $R\rightarrow+\infty$.
    
    For $\bar{R}$, we note by \eqref{def:h}
    \begin{equation}\label{expression-barR}
        \bar{R}(R)=R-R(r)= f\Big(\sqrt{n_c}\tanh{\big(\sqrt{\lambda}r(R)\big)}\Big),
    \end{equation} which is decreasing as $f$ is decreasing by Lemma \ref{lem:solve-fixed-beta}.

    Finally, taking $R\rightarrow+\infty$ in \eqref{expression-barR} we obtain
    \begin{equation*}
        \lim_{R\rightarrow+\infty}\bar{R}(R)=\lim_{R\rightarrow+\infty}f\Big(\sqrt{n_c}\tanh{\big(\sqrt{\lambda}r(R)\big)}\Big)=f(\sqrt{n_c}).
    \end{equation*}
\end{proof}

\paragraph{Translate back} With $r>0$ determined by Proposition \ref{Prop:solve-system}, now we give the expression of $p$ from \eqref{p-0-barR} translated back to the original domain $(-R,R)$.
\begin{equation}\label{expression-p-transformback}
p(x)=\begin{dcases}
0,\quad &|x|<r,\\
\frac{g_-}{2}(|x|-r)^2,\quad &|x|\in(r,r+x_1),\\
-\frac{g_+}{2}(|x|-r-x_1)^2+g_-x_1(|x|-r-x_1)+\frac{x_1^2}{2}g_-,\quad &|x|\in(r+x_1,{R}),
\end{dcases}
\end{equation} where $x_1=x_1(R)$ is given by \eqref{expression-x1}
\begin{equation}\label{final-expression-x1}
    x_1(R)=(1-\alpha)\bar{R}=(1-\alpha)(R-r),\quad R>R_1.
\end{equation}
For $c$, we can also obtain its expression on $(-R,R)$ translating back from \eqref{c-0-barR} and \eqref{c-core}, which is omitted here for simplicity. We note that  $c(x)<\bar{c}$ for $|x|<r+x_1$ and  the necrotic core $\Lambda=\{|x|<R:p=0\}=[-r,r]$ is strictly included in $\{|x|<R: G(c)<0\}=(-r-x_1,r+x_1)$.
We also observe that for $x\in(r,r+x_1)$, while $G(c)<0$, the pressure is still positive.

\subsubsection{Case 2: $R_1\geq R(t)>R_0$}

When $R<R_1$, a necrotic core is not formed, and therefore the nutrient $c$ solves $-\p_{xx}c+\lambda c=0$ on the whole domain $(-R,R)$, leading to
\begin{equation}\label{expression-c-nonnecrotic}
    c(x)=c_B\frac{\cosh(\sqrt{\lambda}x)}{\cosh(\sqrt{\lambda}R)},\quad x\in(-R,R).
\end{equation} At the center of the tumor, the nutrient level is the lowest, given by
\begin{equation}\label{c(0)-non-necrotic}
    c(0)=c_B\frac{1}{\cosh(\sqrt{\lambda}R)}.
\end{equation}Compare \eqref{c(0)-non-necrotic} with the definition of $R_0$ \eqref{def:R0}, we see $c(0)<\bar{c}$ when $R_1\geq R>R_0$. Since $c(R)=c_B>\bar{c}$ we can find unique $x_1\in(0,R)$ such that $c(x_1)=\bar{c}$, 
or given by
\begin{equation}
    x_1(R):=\frac{1}{\sqrt{\lambda}}\arccosh\left({\frac{\bar{c}}{c_B}\cosh(\sqrt{\lambda}R)}\right),\quad R_0<R\leq R_1. 
\end{equation} We have $c(x)>\bar{c}$ for $|x|\in(x_1,R)$ and $c(x)<\bar{c}$ for $|x|<x_1$. Then $p$ can be solved as
\begin{equation}\label{p-expression-case2}
    p(x)=\begin{dcases}
\frac{g_-}{2}x^2+C(R),\quad &|x|<x_1,\\
-\frac{g_+}{2}(x-x_1)^2+g_-x_1(x-x_1)+\frac{x_1^2}{2}g_-+C(R),\quad &|x|\in(x_1,{R}).
\end{dcases}
\end{equation} with a constant $C(R)$. It is expected that $C(R)>0$ for $R\in(R_0,R_1)$ and $C(R_1)=0$. 

\subsubsection{Case 3: $R_0\geq R(t)>0$}

Finally, when $0<R\leq R_0$, $c(x)\geq c(0)\geq \bar{c}$ for all $x\in(-R,R)$, with the equality only holds at $x=0$ and $R=R_0$. And $p$ is simply given by
\begin{equation}\label{p-expression-case3}
    p=\frac{1}{2}g_+R^2-\frac{1}{2}g_+x^2,\quad x\in(-R,R).
\end{equation}

\subsection{Convergence to traveling wave.}\label{sec:3.3}

With understanding on $p$ and $c$ for a given $R(t)$ at a fixed time,  now we turn to the time evolution.

\paragraph{Boundary moving speed} With the profile of $p$, we can compute the boundary moving speed via \eqref{Darcy-analytical}. For $R(t)>R_1$ we deduce from \eqref{expression-p-transformback}
\begin{align*}
            \frac{d}{dt}R(t)&=-\p_xp(R(t),t)=g_+\bar{R}-(g_++g_-)x_1\\&=g_+\bar{R}-(g_++g_-)(1-\alpha)\bar{R},
\end{align*} where the expression for $x_1$ is given by \eqref{final-expression-x1}. Recall the definition of $\alpha$ \eqref{def:alpha} we conclude
\begin{equation}\label{dRdt}
    \frac{d}{dt}R(t)=(\sqrt{(g_++g_-)g_-}-g_-)\bar{R}(t),\quad R(t)>R_1,
\end{equation} where $\bar{R}(t)$, the size of the proliferation rim, is a function of $R(t)$ described in Proposition \ref{Prop:solve-system}-\ref{prop:property-r-barR}. Similarly we can derive $R'(t)$ for $R(t)<R_1$ using \eqref{p-expression-case2} and \eqref{p-expression-case3}.

\paragraph{Expression of $n$} Clearly, we can apply \eqref{dynamics-n-fixed-x-outer} to derive the expression for density $n$ 
\begin{equation}\label{dynamics-n-analytical}
\begin{dcases}
   n(x,t)=0,\quad t\in(0,t_0),\\
    n(x,t)=1,\quad t\in(t_0,t_1),\\
    n(x,t)=e^{-g_-(t-t_1)},\quad t\in(t_1,+\infty),
\end{dcases}
\end{equation}  where, as in \eqref{def-t0t1}, $t_0=t_0(x)$ and $t_1=t_1(x)$ are defined via
\begin{equation*}
    t_0=\inf\{t\geq 0,\, x<R(t)\},\qquad\qquad\qquad t_1=\inf\{t\geq 0,\, x<r(t)\}.
\end{equation*} Here $n(x,t)=0$ for $t$ in $(0,t_0)$ as we do not consider the outer density. In particular, \eqref{dynamics-n-analytical} implies $n(x,t)<1$ for $x<r(t)$ which verifies the claim \eqref{claim-density}. 

\paragraph{Convergence to traveling wave solution}

When $R(t)>R_1$, by Proposition \ref{prop:property-r-barR} $\bar{R}(t)\geq f(\sqrt{n_c})>0$, which provides a positive lower bound for the boundary moving speed in \eqref{dRdt}. Hence, we deduce that 
\begin{equation}
    R(t)\rightarrow+\infty,\qquad \text{as }t\rightarrow+\infty,
\end{equation} which by Proposition \ref{prop:property-r-barR} again implies 
\begin{equation}
    r(t)\rightarrow+\infty,\quad \bar{R}(t)\rightarrow f(\sqrt{n_c})=:\bar{R}_*>0,\qquad \text{as }t\rightarrow+\infty.
\end{equation} In particular, the limit of $\bar{R}(t)$ implies in \eqref{dRdt} that the boundary moving speed $\frac{d}{dt}R(t)$ tends to be a positive constant.

To see how the profile $p$ and $c$ changes, we use the coordinate in Section \ref{sec:321}, looking at the right part of the solution and translate to $(-r(t),0)$ and $(0,\bar{R}(t))$, which in the limit becomes $(-\infty,0)$ and $(0,\bar{R}_*)$, an infinite large necrotic core and a finite width proliferation rim. 

This is exactly the same setting as the traveling wave solution studied in \cite[Section 3.3]{perthame2014traveling}, and $\bar{R}_*$ coincides with the size of their proliferation rim too. Indeed, the convergence to traveling wave starting with a finite-radius solution is also observed in a numerical way in \cite{perthame2014traveling}.

\begin{figure}[htbp]
    \centering
    \includegraphics[width=0.9\textwidth]{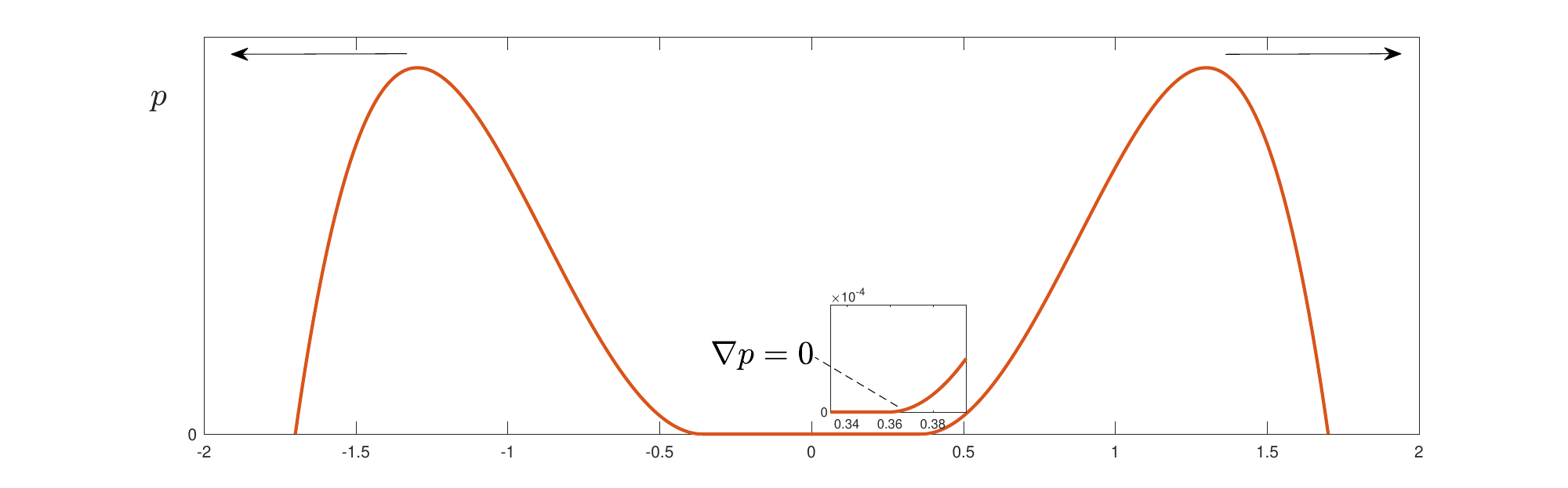}
    \caption{Illustration for the dynamics. Profile of $p$ plotted. A necrotic core forms in the middle, with a regular ($\nabla p=0$) transition to the proliferation rim at the necrotic core boundary. Each of the right part and the left part converges to the profile of the traveling wave solution in \cite[Section 3.3]{perthame2014traveling}.  }
    \label{fig:dynamics}
\end{figure}

\subsection{Uniqueness via comparison arguments}\label{sec:3.4}

Here we prove the uniqueness of system \eqref{eq:proliferation-rim}-\eqref{eq:necrotic-core} via comparison arguments (without knowing the solution exists). More precisely, for a fixed domain $\Omega=(-R,R)$. We reformulate \eqref{eq:proliferation-rim}-\eqref{eq:necrotic-core} as solving
\begin{align}
    \label{eq:tmp-c}
    &\begin{dcases}
        -\p_{xx}c+\lambda c=0,\quad &r<|x|<R,\\
        -\p_{xx}c+\lambda n_c c=0,\quad &|x|<r,\\
        \text{$c$ and $\p_x c$ are continuous}  \quad &|x|= r,\\
        c=c_B,\quad &|x|= R,
    \end{dcases}\\
    &\begin{dcases}
        -\p_{xx}p=G(c),\quad &r<|x|<R,\\
        p\geq0,\quad &r<|x|<R,\\
        p=0,\quad &|x|<r,\\
        p=0,\p_xp=0\quad & |x|=r,\\
        p=0,\quad &|x|=R,\\
    \end{dcases}\label{eq:tmp-p}
\end{align} where $r\in(0,R)$ and functions $p,c$ are all unknowns. 
\begin{proposition}\label{prop:apriori-unique}
    Given $R>0$ and $G(\cdot)$ non-decreasing, solution $(r,p,c)$ to \eqref{eq:tmp-c}-\eqref{eq:tmp-p}  is unique (if exists)
\end{proposition}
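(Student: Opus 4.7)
My plan is to recast \eqref{eq:tmp-c}--\eqref{eq:tmp-p} as a one-parameter fixed-point equation in the necrotic-core radius $r$, then exploit a monotonicity principle to conclude that at most one fixed point can exist.

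For each $r\in(0,R)$, \eqref{eq:tmp-c} is a linear elliptic problem with piecewise-constant decay coefficient and admits a unique solution $c(\cdot;r)$. I would first show that $r\mapsto c(\cdot;r)$ is pointwise nondecreasing: if $r_1<r_2$, the difference $u:=c(\cdot;r_2)-c(\cdot;r_1)$ satisfies a linear elliptic equation whose inhomogeneity vanishes outside the annulus $\{r_1<|x|<r_2\}$ and equals the positive quantity $\lambda(1-n_c)\,c(\cdot;r_1)$ inside, together with zero boundary data at $|x|=R$, the symmetry $u'(0)=0$ at the center, and continuous $C^1$ matching across $|x|=r_1,r_2$. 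A case analysis of where a putative interior minimum of $u$ could occur---on each of the three sub-intervals as well as at the two interfaces---rules out $u<0$; hence $c(\cdot;r_1)\le c(\cdot;r_2)$, and consequently $G(c(\cdot;r_1))\le G(c(\cdot;r_2))$ since $G$ is nondecreasing.

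Next, for each $r$ let $p^{\mathrm{obs}}(\cdot;r)$ denote the unique solution of the obstacle problem on $\Omega=(-R,R)$ with zero obstacle and source $G(c(\cdot;r))$ in the sense of \eqref{minimization-ob}, and let $\rho(r)\ge 0$ be the half-width of its coincidence set, using even symmetry. I claim that a triple $(r,p,c)$ solves \eqref{eq:tmp-c}--\eqref{eq:tmp-p} if and only if $c=c(\cdot;r)$, $p=p^{\mathrm{obs}}(\cdot;r)$, and $\rho(r)=r$. The nontrivial direction identifies the $p$ of \eqref{eq:tmp-p} with the obstacle-problem minimizer; the key consistency ingredient is $G(c)\le 0$ on $[-r,r]$, which follows from the overdetermination $p(\pm r)=p'(\pm r)=0$ combined with $p\ge 0$, because if $G(c(r^+))>0$ the ODE $-p''=G(c)$ would force $p<0$ just outside $|x|=r$, contradicting non-negativity; by continuity and the radial monotonicity of $c$, this propagates to all of $[-r,r]$. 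With this in hand, $p$ satisfies the variational inequality associated with \eqref{minimization-ob}, and uniqueness of the obstacle-problem solution yields $p=p^{\mathrm{obs}}(\cdot;r)$ and therefore $\rho(r)=r$.

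Combining the two steps, if $r_1<r_2$ were both solutions, then $G(c(\cdot;r_1))\le G(c(\cdot;r_2))$, and the standard obstacle-problem comparison---test the variational inequality for $p^{\mathrm{obs}}(\cdot;r_1)$ with $v=\max(p^{\mathrm{obs}}(\cdot;r_1),p^{\mathrm{obs}}(\cdot;r_2))$ and the one for $p^{\mathrm{obs}}(\cdot;r_2)$ with $v=\min(\cdots)$, then sum---yields $p^{\mathrm{obs}}(\cdot;r_1)\le p^{\mathrm{obs}}(\cdot;r_2)$, so $\{p^{\mathrm{obs}}(\cdot;r_2)=0\}\subseteq\{p^{\mathrm{obs}}(\cdot;r_1)=0\}$ and $\rho$ is nonincreasing in $r$. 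Hence $r\mapsto\rho(r)-r$ is strictly decreasing on $(0,R)$ and has at most one zero, giving uniqueness of $r$ and thus of the triple $(r,p,c)$. The main obstacle is the identification in the third paragraph---verifying that the $r$ prescribed in \eqref{eq:tmp-p} coincides with the coincidence-set half-width $\rho(r)$ of the obstacle problem for source $G(c(\cdot;r))$, and in particular establishing the sign condition $G(c)\le 0$ on $[-r,r]$ purely from the overdetermined boundary data. Once this identification is in place, the remaining ingredients---the elliptic maximum principle and the obstacle-problem comparison principle---are standard.
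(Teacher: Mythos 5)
Your proof is correct and rests on the same two pillars as the paper's: the elliptic comparison showing that a larger necrotic core yields a pointwise larger nutrient $c$ (hence a larger source $G(c)$ since $G$ is nondecreasing), followed by the comparison principle for obstacle problems to conclude that the coincidence set shrinks as $r$ grows; packaging this as monotonicity of the fixed-point map $r\mapsto\rho(r)$ rather than as a direct contradiction between two hypothetical solutions is only a cosmetic difference. The one place you go beyond the paper is in explicitly verifying that a solution of the overdetermined system \eqref{eq:tmp-p} coincides with the obstacle-problem minimizer, via the sign condition $G(c)\le 0$ on $[-r,r]$ deduced from $p(\pm r)=p'(\pm r)=0$, $p\ge 0$, and the radial monotonicity of $c$ --- a point the paper's proof only asserts parenthetically.
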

The following proof shows that while system \eqref{eq:tmp-c}-\eqref{eq:tmp-p} is nonlinear, there is still some order structure allowing comparison arguments.
\begin{proof}[Proof of Proposition \ref{prop:apriori-unique}]
Suppose there are two different solutions $(r_1,p_1,c_1)$ and $(r_2,p_2,c_2)$. If $r_1=r_2$ then by \eqref{eq:tmp-c} we uniquely solve $c_1=c_2$ and then by \eqref{eq:tmp-p} $p_1=p_2$.  

Hence WLOG we assume $r_1>r_2$, which means the necrotic core of the first solution is larger. We claim then 
\begin{equation}\label{tmp-c1geqc2}
    c_1(x)\geq c_2(x),\quad \forall x\in(-R,R).
\end{equation} Intuitively, this is because cells in the necrotic core consumes less nutrients, then $c_1\geq c_2$ as the first solution as a larger necrotic core.

The claim \eqref{tmp-c1geqc2} can be verified by explicitly solving \eqref{eq:tmp-c}, or by the following comparison argument, which may better illustrate the aforementioned intuition. We write \eqref{eq:tmp-c} as
\begin{equation}
\begin{dcases}
        -\p_{xx}c_i+\psi_i(x) c_i=0,\quad &|x|<R,\\
        c_i=c_B\quad &|x|= R,
    \end{dcases}\qquad\qquad \text{where }\psi_i(x)=\begin{dcases}
        \lambda,\quad &r_i\geq|x|<R,\\
        \lambda{n_c},\quad &|x|<r_i,
    \end{dcases}
    \end{equation} for $i=1,2$. Then as $r_1>r_2$ using $n_c<1$ we have
    \begin{equation}
        \psi_1(x)=\lambda{n_c}+\lambda(1-n_c)\mathbb{I}_{|x|\geq r_1}\leq \lambda{n_c}+\lambda(1-n_c)\mathbb{I}_{|x|\geq r_2}=\psi_2(x).
    \end{equation} Hence $u:=c_1-c_2$ solves
    \begin{equation}
        \begin{dcases}
            -\p_{xx}u+\psi_1(x) u=(\psi_2-\psi_1)c_2\geq0,\quad & |x|<R,\\
            u=0,\quad & |x|=R. 
        \end{dcases}
    \end{equation} Hence as $\psi_1(x)>0$ by maximum principle we have $u\geq0$, which proves claim \eqref{tmp-c1geqc2}.

    As $G(\cdot)$ is non-decreasing, \eqref{tmp-c1geqc2} implies $G(c_1)\geq G(c_2)$. Note that by \eqref{eq:tmp-p} $p_1,p_2$ are solutions to obstacle problems with the same constraint $p\geq 0$ and different sources $G(c_1),G(c_2)$ respectively (when $c$ is radially symmetric and increasing). By the comparison principle for obstacle problems (see e.g. \cite[Chapter 1, Theorem 3.3]{MR1009785}) we have $p_1\geq p_2$, which implies the $r_1\leq r_2$, as the coincidence set of $p_1$ shall be smaller. Intuitively, it means $p_1$ corresponds to a larger growth rate therefore its necrotic core shall be smaller. This is a contradiction with the previous assumption $r_1>r_2$.
\end{proof}

\section{Traveling wave solution with outer density}\label{sec:4}

As discussed in Section \ref{sec:analytical}, the traveling wave solution represents a typical long-term asymptotic behavior of the invasive tumor in the absence of the outer density. Whereas, when the outer density is present, the utility of analytical techniques becomes limited as the models encompass additional layers of complexity. The primary goal of this section is to investigate the existence of traveling wave solutions where the necrotic core and the outer density are connected by a proliferating rim with a finite width. 

In this section, we consider the one-dimensional traveling wave model propagating to the right. We denote the width of the proliferating rim by $R$, and place the boundary point between the necrotic core $\Lambda$ and the proliferating rim $N$ at the origin, hence we have divided the real line into three parts:
\begin{equation}
    \Lambda=(-\infty,0], \quad N=(0,R), \quad \mbox{and} \quad \mathbb R \backslash \Omega = [R, + \infty).
\end{equation}
Thus, $\Omega = (-\infty, R)$ denotes the tumor bulk. The density is saturated in proliferating rim $N$, i.e. $n\equiv 1$, while the density is unsaturated in the interior of the necrotic core $\Lambda$ or the outer region $\mathbb R \backslash \Omega$, i.e. $0\le n <1$. In particular, we expect the density to have a smooth transition from the necrotic core to the proliferating rim but a sharp transition from the proliferating rim to the outer region. See Figure \ref{fig:5} for a schematic plot of the density in the traveling wave model.

In the next, the traveling wave models will be explicitly constructed with both \textit{in vitro} and \textit{in vivo} nutrient dynamics, and we shall prove the existence of traveling wave solutions in each scenario.

\begin{figure}[htbp]
    \centering
    \includegraphics[width=0.5\textwidth]{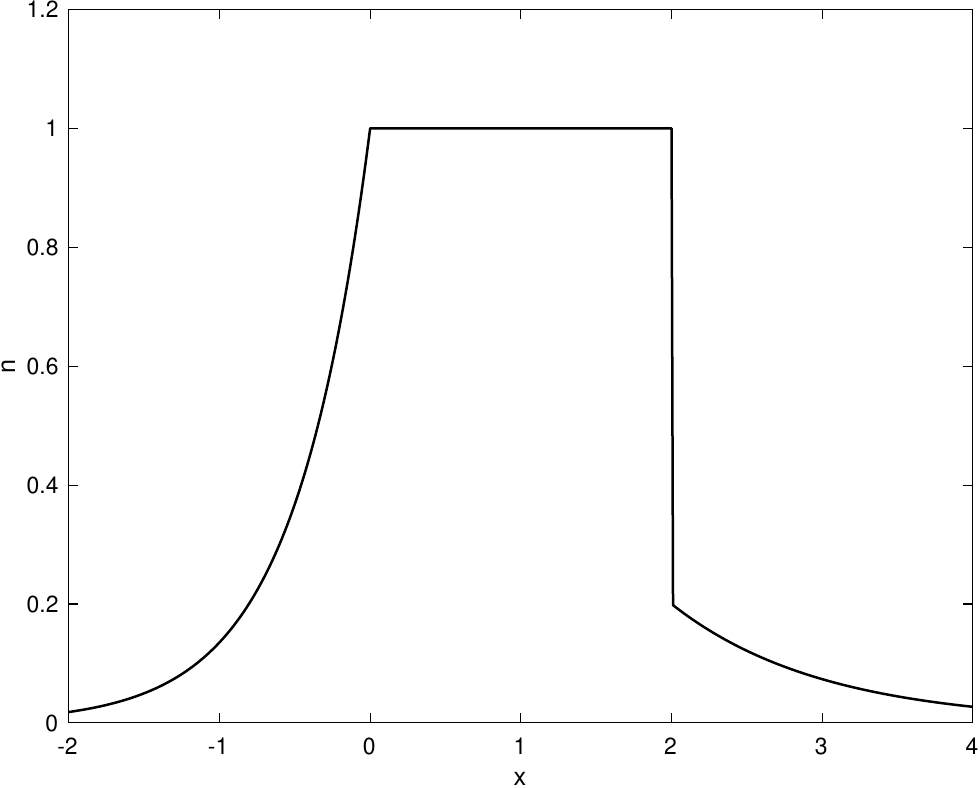}
    \caption{Schematic Drawing for the Model with outer density: In the figure,
    $(-\infty,0]$ is the necrotic core;  $(0,2)$ is the saturated region; $[2,+\infty)$ is the outer region. The cell density is continuous on the boundary between the saturated region and the necrotic core $x=0$;  while there is a jump in the density on the boundary between the saturated region and the outer region $x=2$.}
    \label{fig:5}
\end{figure}

\subsection{Traveling wave model and main results} \label{Sec:4.1}

For simplicity, we start with the one-dimensional tumor growth model, and the generalization to multi-dimensional cases is straightforward as long as we assume radial symmetry. In traveling wave solutions, with a slight abuse of notation, we consider solutions in the following form 
\[
n(x,t)=n(x-\sigma t), \quad p(x,t)=p(x-\sigma t), \quad c(x,t)=c(x-\sigma t),
\]
where $\sigma$ is the traveling wave velocity. 

We first focus on the traveling wave equation for $n$ and $p$. 
By the obstacle problem for the pressure described in Section \ref{Sec:2.1}, we know that $p$ satisfies
\begin{equation} \label{tw:p}
\begin{dcases} 
-\partial_{x x} p=G(c), & \text { in }(0, R), \\ p(0)=p(R)=0, \quad p'(0)=0 . \end{dcases}
\end{equation}
With natural extension, we in fact have $p\equiv 0$ for $x \in (-\infty, 0] \cup [R,+\infty)$.
Next, by density dynamics described in Section \ref{Sec:2.3}, we obtain that 
\begin{equation} \label{tw:n}
\begin{dcases}-\sigma \partial_x n=n G(c), & \text { in } \mathbb{R} \backslash(0, R), \\ n=1, & \text { in }(0, R), \\ n(R+)=n_R, \quad n(0)=1. \end{dcases}
\end{equation}
Note that $n$ has a continuous transition at $x=0$ but jumps from $1$ to $n_R$ at $x=R$. Then, by the velocity law \eqref{Darcy-incompresible-outer}, we have 
\begin{equation} \label{tw:v}
    \sigma \bigl(n(R-)-n(R+)\bigr)=-\partial_xp(R-).
\end{equation}

Next, we move to derive the nutrient models for the traveling wave solutions. For the \textit{in vitro} case, $c$ satisfies
\begin{equation}\label{tw:in_vitro}
    \begin{cases}
    -\partial_{x x} c+\psi(n) c=0, & x<R, \\ c(x)=c_B, &x \ge R.  
    \end{cases}
\end{equation}
And for the \textit{in vivo} case, $c$ satisfies
\begin{equation}\label{tw:in_vivo}
\begin{dcases}
-\partial_{x x} c+\psi(n) c=1_{\{x>R\}}\left(c_B-c\right), & x \in \mathbb{R}, \\ \lim _{x \rightarrow+\infty} c(x)=c_B.
\end{dcases}
\end{equation}

To sum up, we have considered traveling wave solutions of the tumor growth model in two scenarios: 
\begin{itemize}
    \item[$\cdot$] Traveling wave model with the \textit{in vitro} nutrient dynamics composed of \eqref{tw:p}, \eqref{tw:n}, \eqref{tw:v} and \eqref{tw:in_vitro}; 
    \item[$\cdot$] Traveling wave model with the \textit{in vivo} nutrient dynamics composed of \eqref{tw:p}, \eqref{tw:n}, \eqref{tw:v} and \eqref{tw:in_vivo}; 
\end{itemize}

Before proceeding to investigate the existence of traveling wave solutions, we make the following assumptions for the growth rate function $G$ and the consumption function $\psi$:
\begin{assumption}\label{assum_for_psi_G} 
$G$ and $\psi$ satisfy following assumptions:
	\begin{enumerate}
		\item There exists a threshold parameter $\bar{c}>0$ such that
  $$
  \begin{cases}
  G(c)=-g_{-}<0,&c<\bar{c};\\
  G(c)>0,&c\ge\bar{c}.
  \end{cases}
  $$
  Here, $g_-$ is the decay rate when the nutrient is insufficient.  
  $G\in C^1[\bar{c},+\infty)$, $G'\ge 0$ when $x\ge \bar{c}$.
		\item $\psi \in C^1(\mathbb{R})$, $\psi'> 0$, $\psi(0)=0$.
  \item $G$ and $\psi$ satisfy
  $$
  \frac{c_B\|\psi'\|_{L^\infty}\|G'\|_{L^\infty}}{eG(\bar{c})}<1.
  $$
	\end{enumerate}
\end{assumption}

The first item in Assumption \ref{assum_for_psi_G} means that the growth rate $G$ is of an ignition type. In theory, other rate functions can be considered in a similar fashion. The second item is standard and the last one is a technical assumption.

\subsubsection{\textit{In vitro} case}
Consider the traveling wave model with the \textit{in vitro} nutrient dynamics \eqref{tw:p}, \eqref{tw:n}, \eqref{tw:v} and \eqref{tw:in_vitro}. We observe that in this case, the outer density has the following form
\begin{equation}
    n=n_R \exp\left(-\frac{(x-R)G(c_B)}{\sigma}\right), \quad  x \in [R, +\infty).
\end{equation}
With similar arguments as in \cite{perthame2014traveling}, we can conclude that
\begin{theorem}\label{invitrothm}
    For the traveling wave model \eqref{tw:p}, \eqref{tw:n}, \eqref{tw:v} and \eqref{tw:in_vitro},  assume that $c_B>\bar{c}>0,0<n_R<1$ and the first two items of Assumption \ref{assum_for_psi_G} hold, then there exists $\sigma>0$ and $R>0$ such that the system admits a solution with $c$ increasing on $(-\infty,R]$, $n$ increasing on $(-\infty,0]$ and $\lim_{x\rightarrow-\infty}n(x)=0$.  
\end{theorem}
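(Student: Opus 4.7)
My plan is to extend the construction in \cite[Section 3.3]{perthame2014traveling} (which handles $n_R=0$) to accommodate a nontrivial outer density $n_R\in(0,1)$. Parametrize candidate solutions by $(\sigma,R)\in(0,\infty)^2$. Anticipating that $G(c)\equiv -g_-$ on the necrotic side and $c\equiv c_B$ on the outer region, equation \eqref{tw:n} integrates explicitly to
\[
n(x;\sigma,R)=\begin{dcases}e^{g_-x/\sigma},& x\le 0,\\ 1,& 0<x<R,\\ n_R\,e^{-(x-R)G(c_B)/\sigma},& x\ge R,\end{dcases}
\]
which decays at $\pm\infty$ and matches the prescribed traces $n(0)=1$, $n(R+)=n_R$. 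Inserting this $n$ into \eqref{tw:in_vitro} gives a linear elliptic equation for $c$ on $(-\infty,R]$ with nonnegative bounded coefficient $\psi(n)$ and $c(R)=c_B$; a sub/super-solution argument using the barriers $0$ and $c_B$, together with the convexity $c''=\psi(n)c\ge 0$, produces a unique bounded solution that is strictly increasing on $(-\infty,R]$ and converges to a finite limit in $[0,c_B)$ at $-\infty$.

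Given $c(\cdot;\sigma,R)$, define $x_1=x_1(\sigma,R)\in(0,R)$ as the unique root of $c(x_1)=\bar c$, which exists once $c(0;\sigma,R)<\bar c$ (this holds for $R$ large enough). On $(0,R)$, where $\psi(n)\equiv\psi(1)$ is constant, $c$ is an explicit $\cosh/\sinh$ combination, so $x_1$ is characterized by a transcendental relation involving $R$ and $c(0;\sigma,R)$. The pressure from \eqref{tw:p} then integrates explicitly using $p(0)=p'(0)=0$: on $[0,x_1]$, $p(x)=\tfrac{g_-}{2}x^2$, and on $[x_1,R]$,
\[
p(x)=\tfrac{g_-}{2}x_1^2+g_-x_1(x-x_1)-\int_{x_1}^x(x-s)G(c(s))\,ds.
\]
The two remaining closure conditions $p(R)=0$ and $-p'(R-)=\sigma(1-n_R)$ collapse into a $2\times 2$ transcendental system $\Phi_1(\sigma,R)=0$, $\Phi_2(\sigma,R)=0$ where
\[
\Phi_1:=\tfrac{g_-}{2}x_1^2+g_-x_1(R-x_1)-\int_{x_1}^R(R-s)G(c(s))\,ds,\qquad \Phi_2:=\int_{x_1}^R G(c(s))\,ds-g_-x_1-\sigma(1-n_R).
\]

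To solve the $2\times 2$ system, I would fix $R$ and rewrite $\Phi_2=0$ as the fixed-point equation $\sigma=T_R(\sigma):=(1-n_R)^{-1}\bigl(\int_{x_1}^R G(c(s))\,ds-g_-x_1\bigr)$. The technical bound $c_B\|\psi'\|_{L^\infty}\|G'\|_{L^\infty}/(eG(\bar c))<1$ in Assumption \ref{assum_for_psi_G} is tailored to make $T_R$ a contraction on an interval of admissible $\sigma$: a perturbation $\delta\sigma$ propagates through the chain $\sigma\to n\to\psi(n)\to c\to G(c)\to p'(R-)$ with combined Lipschitz constants controlled by exactly this expression. Once a unique continuous selection $\sigma=\sigma(R)$ is obtained, existence of an $R$ with $\Phi_1(\sigma(R),R)=0$ follows from an intermediate-value / continuation argument tracking the sign of $\Phi_1$ between a small-$R$ regime (where $x_1\to 0$ forces $\Phi_1<0$) and a large-$R$ regime where the configuration approaches the $n_R=0$ traveling wave of \cite{perthame2014traveling}. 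A posteriori verification of $c<\bar c$ on $(-\infty,0]$, $c<c_B$ on $(-\infty,R)$, and strict monotonicity of $c$ and $n$ then closes the ansatz. The main obstacle is the contraction estimate for $T_R$: since $c$ depends on $\sigma$ through the nonlocal ODE on the full half-line $(-\infty,R]$ with non-constant coefficient $\psi(n(\cdot;\sigma))$, tracking how $\delta\sigma$ propagates to $\delta p'(R-)$ requires a careful Green's-function analysis for $-\partial_{xx}+\psi(1)$ on $(0,R)$ together with boundary coupling at $x=0$, and the factor $e$ in Assumption \ref{assum_for_psi_G} strongly suggests that the elementary identity $\sup_{y\ge 0}ye^{-y}=1/e$ is used in the decisive bound.
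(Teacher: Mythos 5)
Your reduction is set up correctly: the explicit profile for $n$, the formulas for $p$ on $[0,x_1]$ and $[x_1,R]$, and the two closure conditions $\Phi_1=\Phi_2=0$ all check out, and you have correctly identified that in the \textit{in vitro} case the outer density decouples from the nutrient (since $c\equiv c_B$ on $[R,+\infty)$) and enters only through the factor $(1-n_R)$ in the velocity law. The gap is in how you propose to solve the resulting $2\times 2$ system. You want to solve $\Phi_2=0$ first, at fixed $R$, by making $T_R$ a contraction, with the Lipschitz control coming from item 3 of Assumption \ref{assum_for_psi_G}. But Theorem \ref{invitrothm} assumes only the first two items of that assumption, so the bound $c_B\|\psi'\|_{L^\infty}\|G'\|_{L^\infty}/(eG(\bar c))<1$ is not available here; in the paper it serves an entirely different purpose, namely the monotonicity of $B(\sigma,c_R)$ in $c_R$ in Step 4 of the proof of Lemma \ref{lemma_for_right}, which is only relevant to the \textit{in vivo} model. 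Moreover, even granting the bound, the contraction property of $T_R$ is asserted rather than established: the dependence of $\int_{x_1}^R G(c)\,ds-g_-x_1$ on $\sigma$ passes through the nonlocal coefficient $\psi(n(\cdot;\sigma))$ on the entire necrotic core, and nothing in your outline produces a Lipschitz constant below $1-n_R$.

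The argument the paper has in mind (it invokes \cite[Theorem 3.1]{perthame2014traveling} and reproduces its structure in the \textit{in vivo} proof) nests the two unknowns in the opposite order. Fix $\sigma$; the requirement that $c$ be increasing and bounded on the necrotic core, where $n=e^{g_-x/\sigma}$, yields a Robin condition $c'(0)=A(\sigma)c(0)$, which together with $c(R)=c_B$ gives $c$ on $(0,R)$ as an explicit $\cosh/\sinh$ profile $\gamma_\sigma(R,s)$ that is monotone decreasing in $R$. The condition $p(R)=0$, rewritten as $\int_0^1 sG(\gamma_\sigma(R,s))\,ds=0$, then has a unique root $R_\sigma$ by monotonicity and the intermediate value theorem, with $R_\sigma$ continuous in $\sigma$. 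The velocity law becomes a scalar fixed-point problem $(1-n_R)\sigma=R_\sigma\int_0^1 G(\gamma_\sigma(R_\sigma,s))\,ds$, solved by continuity after checking that the right-hand side dominates as $\sigma\to0^+$ and is bounded as $\sigma\to+\infty$; the factor $1-n_R\in(0,1)$ disturbs neither limit. No contraction and no Lipschitz bounds on $G$ or $\psi$ are needed. To repair your proof you would either have to adopt this nesting order or supply an argument for solving $\Phi_2=0$ that does not rely on the unavailable third item of the assumption.
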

In other words, we observe that here the outer density does not influence the tumor bulk, except amplifying the traveling wave speed by a factor in \eqref{tw:v}. Hence the scenario  is almost the same as the case without outer density, which has been studied in \cite[Theorem 3.1]{perthame2014traveling}. Thus the existence of the traveling wave solution can be shown in an exactly same way. The proof is omitted.

\subsubsection{\textit{In vivo} case}

In comparison, the traveling wave model with the \textit{in vitro} nutrient dynamics \eqref{tw:p}, \eqref{tw:n}, \eqref{tw:v} and \eqref{tw:in_vitro} is more complicated.

We observe that in the outer region, i.e. on the interval $[R,\infty)$, the density equation and the nutrient equation are coupled, and given $n_R$, we are not able to write down the explicit formula for $c$ or $n$. Hence, the outer density has multiple impacts on the traveling wave model by influencing the nutrient distribution and affecting the traveling wave speed.

To investigate the traveling wave solution in this scenario, we need to seek additional qualitative properties. It turns out traveling wave solutions do not exist for all $n_R<1$. Instead, we can find a critical value $\bar n \in (0,1]$ which is determined by the consumption function $\psi$, such that the traveling wave model admits a solution when $n_R$ is less than $\bar n$.

To be  precise, the threshold value $\bar n$ is defined as follows. 

\begin{definition} \label{assump_for_barn}
    Given the consumption function $\psi$, we define $\bar n$ to be largest $ n \in (0, 1]$ such that $\psi(n)$  satisfies
 \begin{equation}\label{cond_for_barn}
          \frac{c_B}{\sqrt{1+\psi({n})}}-\bar{c}\sqrt{1+\psi({n})} > \bar c\sqrt{\psi(1)} 
 \end{equation}
and if we let $C>\bar{c}$ to be constant with
 $$
{\frac{c_B}{\sqrt{1+\psi({n})}}-C\sqrt{1+\psi({n})}}=C \sqrt{\psi(1)},
 $$
 then we have
 $$
 C>\bar{c}e^s, \quad \mbox{where} \quad \tanh(s)=\frac{\psi({n})}{\sqrt{\psi(1)}}.
 $$

\end{definition}
We remark that $\bar n$ is well-defined as long as $0<\bar{c}<{c_B}/(1+\sqrt{\psi(1)})$ in which case as long as $n$ is sufficiently close to $0$, the two conditions in Definition \ref{assump_for_barn} are clearly satisfied. In fact, it is possible that for some consumption function, the associated threshold $\bar n =1$, while for some other consumption function, we have $\bar n <1$.

Now we are ready to state the main theorem
\begin{theorem}\label{invivothm}
Consider the traveling wave model  \eqref{tw:p}, \eqref{tw:n}, \eqref{tw:v} and \eqref{tw:in_vivo}. Assume that $c_B>0$ and $0<\bar{c}<{c_B}/(1+\sqrt{\psi(1)})$. In addition, assume that Assumption \ref{assum_for_psi_G}  holds, and $\bar n$ is the threshold value as in Definition \ref{assump_for_barn}. Then when $0\le n_R<\bar{n}$, there exist $\sigma>0$ and $R>0$ such that the traveling wave model admits a solution with $c$ increasing on $\mathbb{R}$, $n$ increasing on $(-\infty,0]$ and $\lim_{x \rightarrow-\infty}n(x)=0$.
	\end{theorem}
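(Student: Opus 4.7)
The plan is to split $\mathbb{R}$ into the three natural regions $(-\infty,0]$, $(0,R)$, $(R,+\infty)$, solve the $(n,c,p)$ system on each with the prescribed qualitative behavior, and then glue the pieces together by matching at $x=0$ and $x=R$ together with the two algebraic constraints coming from the obstacle formulation for $p$. The wave speed $\sigma$, the width $R$ of the proliferating rim, and the auxiliary shooting parameter $c_0:=c(0)$ serve as three scalar unknowns; the goal is to produce three equations in them and solve by continuation.

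On $(-\infty,0]$ the demand that $n$ increase from $0$ to $1$ forces $c<\bar c$ throughout, so $G(c)\equiv -g_-$ and $-\sigma n'=-g_- n$ integrates to $n(x)=\exp(g_- x/\sigma)$. Substituting into $-c''+\psi(n)c=0$ gives a linear Sturm--Liouville equation whose nonnegative coefficient decays at $-\infty$; standard arguments yield a unique (up to scaling) positive bounded solution, which, normalized by $c(0)=c_0$, defines a continuous map $\Phi_-:(c_0,\sigma)\mapsto c'(0)$. On $(0,R)$ we simply have $n\equiv 1$, so $c(x)=c_0\cosh(\omega x)+(c'(0)/\omega)\sinh(\omega x)$ with $\omega:=\sqrt{\psi(1)}$, and integrating $-p''=G(c)$ twice with $p(0)=p'(0)=0$ gives $p$ explicitly on $(0,R)$. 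The obstacle constraint $p(R)=0$ together with the modified velocity law \eqref{tw:v} then read
\begin{equation*}
    \int_0^R (R-z)\,G(c(z))\,dz=0, \qquad \sigma(1-n_R)=\int_0^R G(c(z))\,dz,
\end{equation*}
which are the first two of the three equations on $(c_0,R,\sigma)$.

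The main technical work concerns the outer region $(R,+\infty)$, on which $(n,c)$ satisfies the genuinely coupled nonlinear system
\begin{equation*}
    \sigma n'=-n\,G(c), \qquad -c''+\psi(n)c=c_B-c, \qquad \lim_{x\to+\infty}c(x)=c_B.
\end{equation*}
Viewed as a three-dimensional autonomous ODE in $(n,c,c')$, the equilibrium $(0,c_B,0)$ is hyperbolic with eigenvalues $-G(c_B)/\sigma,\ -1,\ +1$, so its two-dimensional stable manifold $\mathcal{M}_\sigma$ selects, for each admissible triple $(c(R),n_R,\sigma)$, a unique slope $c'(R)=\Phi_+(c(R),n_R,\sigma)$; matching $c'$ at $x=R$ then yields the third equation. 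The delicate point is to show that the trajectory issued on the stable manifold stays in the physical regime $n\in[0,1)$, $c\in(\bar c,c_B]$, with $c$ increasing. I would sandwich it between explicit solutions of the frozen-coefficient linear problems $-c''+\psi(n_R)c=c_B-c$ and $-c''+\psi(1)c=0$; the two inequalities collected in Definition \ref{assump_for_barn} are exactly what makes this sub/super-solution comparison valid, and this is where the threshold $\bar n$ enters.

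With all three equations in hand, existence is obtained by a continuation / intermediate value argument in the spirit of \cite[Theorem 3.1]{perthame2014traveling}. Exploiting the explicit monotonicities on $(0,R)$ and the qualitative properties of $\Phi_\pm$, the three equations are eliminated successively: the obstacle compatibility determines $R$ as a function of $(c_0,\sigma)$ (monotone because $G(c(\cdot))$ changes sign at most once on $(0,R)$), the velocity law then fixes $\sigma$ in terms of $c_0$, and the $c'$-matching at $x=R$ reduces to a single scalar equation in $c_0\in(0,\bar c]$, solved by the intermediate value theorem using continuity of all composed maps and the boundary asymptotics, compared against the $n_R=0$ case already treated in \cite[Theorem 3.1]{perthame2014traveling}. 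The principal obstacle I anticipate is the quantitative control of the outer map $\Phi_+$: establishing global existence of the outer trajectory on $\mathcal{M}_\sigma$, its monotonicity, and its continuous dependence on all parameters are precisely what forces the detailed algebraic conditions of Definition \ref{assump_for_barn}.
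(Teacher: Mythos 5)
Your architecture is essentially the paper's: the same three-region decomposition, the same explicit formulas on $(-\infty,0]$ and $(0,R)$, an outer shooting map $\Phi_+(c(R),n_R,\sigma)$ that is precisely the map $B(\sigma,c_R)$ of Lemma \ref{lemma_for_right} (which the paper constructs via a Type I/Type II shooting dichotomy and comparison with the frozen problems $-\tilde c''+\psi(n_R)\tilde c=c_B-\tilde c$ and $-\bar c''=c_B-\bar c$, not with $-c''+\psi(1)c=0$ as you write), and a final continuation argument. You do permute the elimination order: the paper first solves the two slope-matching conditions \eqref{equ_cRcR'} for $c_R$ given $(\sigma,R)$, then uses $p(R)=0$ to determine $R_\sigma$, and only last invokes the velocity law to find $\sigma$ by the intermediate value theorem; you propose obstacle $\to R$, velocity $\to\sigma$, matching $\to c_0$. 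That order is not obviously unworkable, but every monotonicity and limiting fact would have to be re-derived in the new order and you verify none of them.

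The genuine gap is your account of where the threshold $\bar n$ enters. You claim the two inequalities of Definition \ref{assump_for_barn} are ``exactly what makes this sub/super-solution comparison valid'' on the outer region. They are not: the bounds $B(\sigma,c_R)\in\bigl[\tfrac{c_B}{\sqrt{1+\psi(n_R)}}-\sqrt{1+\psi(n_R)}\,c_R,\;c_B-c_R\bigr]$ hold for \emph{every} $n_R\in[0,1)$. The smallness $n_R<\bar n$ is used afterwards, in two places: (i) to guarantee that, as $c_R$ ranges over $(\bar c,\,c_B/(1+\psi(n_R)))$, the achievable ratios $B(\sigma,c_R)/(\sqrt{\psi(1)}c_R)$ cover the full interval $[\tanh(\sqrt{\psi(1)}R),1]$ of ratios demanded by the inner matching \eqref{cond_for_cRpi}, so that the system \eqref{equ_cRcR'} is solvable for all $R>R_0(n_R)$; and (ii) to force $c_{R_0}>C>\bar c e^{s}$, hence $\gamma_\sigma(R_0,s)>\bar c$ and $\int_0^1 sG(\gamma_\sigma(R_0,s))\,ds>0$, which supplies the sign at the left endpoint of the intermediate-value argument producing $R_\sigma$. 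In your scheme this burden lands exactly on the final ``single scalar equation in $c_0$'' that you dispatch with an unexamined appeal to the intermediate value theorem and ``boundary asymptotics'': you never exhibit the two endpoint signs, and without tracking how $\bar n$ controls them the argument does not close. That step is where the content of the theorem — why a smallness condition on $n_R$ is needed at all — actually resides, and as written your plan does not supply it.
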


 This theorem indicates that the outer density needs to satisfy a smallness condition for traveling wave solutions to exist.  The schematic drawing for such a typical solution is shown in Figure \ref{fig:6}:
      \begin{figure}[htbp]
    \centering
    \includegraphics[width=0.5\textwidth]{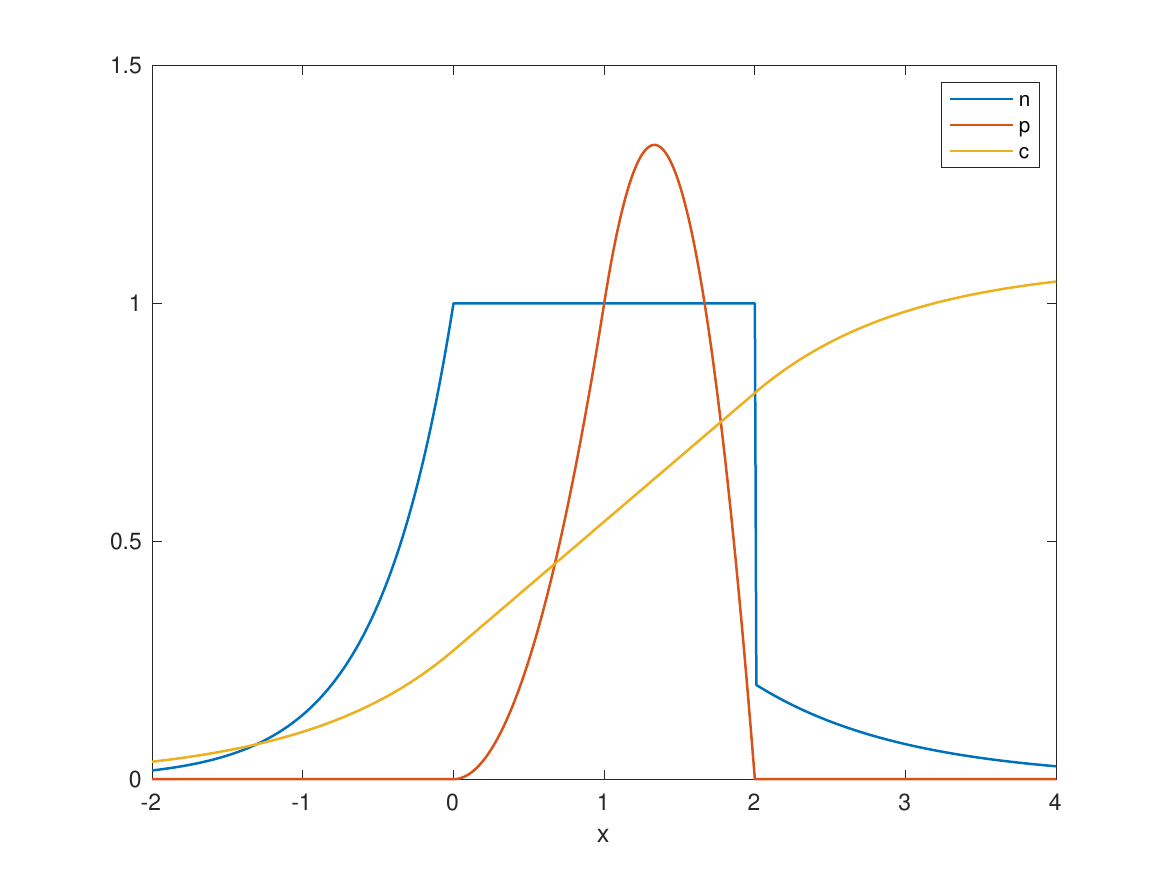}
    \caption{Schematic Drawing for the solution: In the figure,
    $(-\infty,0]$ is the necrotic core;  $(0,2)$ is the saturated region; $[2,+\infty)$ is the outer region. The cell density $n$ is similar to the one in Figure \ref{fig:5}. The pressure $p$ is supported in $(0,2)$ and satisfies $p'(0)=0$. The nutrient density is increasing and satisfies $c(-\infty)=0,\,c(+\infty)=c_B$.}
    \label{fig:6}
\end{figure}

The proof of Theorem \ref{invivothm} is partially inspired by  Theorem 3.1 in \cite{perthame2014traveling}, whereas substantial work is needed to treat the outer density. To this end, we must examine the behavior of $c$ and $n$ on the outer region $[R,+\infty)$. In particular, in the process of constructing the traveling wave solutions, we need to consider the system
\begin{equation}\label{problem_for_right}
	\begin{dcases}
	-\sigma \partial_xn=nG(c),&x>R;\\
    -\partial_{xx}c+\psi(n)c=(c_B-c),&x>R;\\
	n(R+)=n_R,\,c(R)=c_R,\,\partial_xc(R)=c'_R.
	\end{dcases}
	\end{equation}
More precisely, when given the traveling wave speed $\sigma$ and $c(R) = c_R$, we must determine admissible values for $c'_R$  so that $c(x)$ is increasing on $[R,+\infty)$ and approaches the boundary value $c(+\infty)=c_B$. It turns out the admissible value for $c'_R$ is unique, which is denoted by $B(\sigma,c_R)$ in the next, and we have the following lemma.
\begin{lemma}\label{lemma_for_right}
Consider the system (\ref{problem_for_right}). Assume $\sigma>0,\,R>0,\,n_R\in [0,1)$, Assumption \ref{assum_for_psi_G} holds, and $\bar{c}<c_R<c_B/(1+\psi(n_R))$. 
	Then, there exists a continuous mapping $(\sigma,\,c_R)\mapsto B(\sigma,c_R)\in \mathbb{R}$ such that: 
 \begin{enumerate}
 \item In the solution of the system (\ref{problem_for_right}), $c$ is non-decreasing with $c(+\infty)=c_B$ if and only if $c_R'=B(\sigma,c_R)$;
 \item $B(\sigma,c_R)\in\bigl[\frac{c_B}{\sqrt{1+\psi(n_R)}}-\sqrt{1+\psi(n_R)}c_R,\,c_B-c_R\bigr]$;
 \item $B(\sigma,c_R)$ is decreasing with respect to $c_R$.
 \end{enumerate}
\end{lemma}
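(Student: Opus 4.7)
The plan is to proceed by a shooting argument in $c'_R$. Fixing $(\sigma, c_R, n_R)$ satisfying the hypotheses, standard ODE theory gives, for every candidate $c'_R$, a local $C^1$ solution of \eqref{problem_for_right}; extending maximally, I would classify slopes into three sets according to the long-time behavior: $\mathcal{A}^{+}$ consists of slopes for which $c$ strictly exceeds $c_B$ at some finite $x$; $\mathcal{A}^{-}$ consists of slopes for which $c$ attains an interior local maximum in $(R, \infty)$ (so $c'$ becomes negative); and the remaining set is the one of interest. Continuous dependence on initial data makes both $\mathcal{A}^{\pm}$ open, so the target value $B(\sigma, c_R)$ will be their common boundary provided the two sets are non-empty complementary half-lines.

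For the bounds in item 2, I would compare with two linear ``frozen-$n$'' equations. Let $u$ be the bounded solution of $-u'' + u = c_B$ with $u(R) = c_R$, explicitly $u(x) = c_B - (c_B - c_R)e^{-(x-R)}$, which has $u'(R) = c_B - c_R$. The difference $v := c - u$ satisfies $-v'' + v = -\psi(n)\, c \leq 0$, so whenever $v(R) = 0$ and $v'(R) > 0$ the differential inequality $v'' \geq v$ forces $v \to +\infty$, hence $c$ crosses $c_B$; this gives $(c_B - c_R, \infty) \subset \mathcal{A}^{+}$. Symmetrically, using the bounded solution of $-u'' + (1 + \psi(n_R))u = c_B$ together with the observation that $n \leq n_R$ along any orbit that keeps $c \geq \bar c$ (since $-\sigma n' = n G(c) > 0$), the slope $c_B/\sqrt{1+\psi(n_R)} - \sqrt{1+\psi(n_R)}\, c_R$ furnishes the lower bound by an analogous comparison argument.

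The crucial ingredient is strict monotonicity of the shooting curve in $c'_R$. Given two solutions $(c_i, n_i)$ with $c_i(R) = c_R$, $n_i(R+) = n_R$, and $c'_1(R) < c'_2(R)$, I would show $w := c_2 - c_1 \geq 0$ on the common interval of existence. On any subinterval where $c_2 \geq c_1$, the explicit formula $n_i(x) = n_R \exp\bigl(-\frac{1}{\sigma}\int_R^x G(c_i(y))\, dy\bigr)$ together with the monotonicity of $G$ yields $n_2 \leq n_1$ and hence $\psi(n_2) \leq \psi(n_1)$, so
\begin{equation}
-w'' + (1 + \psi(n_2))\, w \;=\; (\psi(n_1) - \psi(n_2))\, c_1 \;\geq\; 0, \qquad w(R) = 0,\quad w'(R) > 0.
\end{equation}
A standard maximum principle argument prevents $w$ from first vanishing at any $x_\ast > R$, closing the bootstrap and preserving the ordering globally. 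Consequently $\mathcal{A}^{\pm}$ are complementary open half-lines whose common boundary defines $B(\sigma, c_R)$; continuity of $B$ in $(\sigma, c_R)$ then follows from continuous dependence of ODE flows on parameters combined with this strict separation, and item 3 follows from an entirely analogous comparison with the roles of $c_R$ and $c'_R$ interchanged.

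The main obstacle I anticipate is verifying that on the separating slope $c$ in fact converges to $c_B$ rather than stabilizing at some intermediate plateau. Since $c$ is monotone non-decreasing and bounded by $c_B$ on this orbit, there is a limit $c_\infty \in [c_R, c_B]$ with $c' \to 0$; passing to the limit in the ODE and in the integral representation for $n$ reduces the question to ruling out $c_\infty < c_B$. I expect the argument to use that $G(c_\infty) > 0$ forces exponential decay of $n$ so that $\psi(n) \to 0$, whence the limiting relation $\psi(n_\infty)\, c_\infty = c_B - c_\infty$ gives $c_\infty = c_B$. The technical item 3 of Assumption~\ref{assum_for_psi_G} will likely enter this last step to quantitatively ensure that the nonlinear coupling between $c$ and $n$ behaves as a controlled perturbation of the linear stable-manifold picture on which the bounds of item 2 rest.
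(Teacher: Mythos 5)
Your overall strategy --- shooting in $c'_R$, classifying slopes by whether $c$ overshoots $c_B$ or turns around, and trapping $B$ by comparison with the two frozen-coefficient linear equations whose decaying solutions give exactly the endpoints $\frac{c_B}{\sqrt{1+\psi(n_R)}}-\sqrt{1+\psi(n_R)}\,c_R$ and $c_B-c_R$ --- is essentially the paper's (its ``Type I / Type II'' dichotomy and its Steps 2--3), and your plateau argument for $c(+\infty)=c_B$ is sound; in fact it needs no extra hypothesis, since $c_\infty\ge c_R>\bar c$ forces $G(c)\ge G(c_R)>0$, hence $n\to 0$ exponentially and $c_\infty=c_B$.

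The genuine gap is the monotonicity of the shooting curve in $c'_R$. You correctly derive $-w''+(1+\psi(n_2))w=(\psi(n_1)-\psi(n_2))c_1=:f$ with $f\ge 0$ wherever $c_2\ge c_1$, but then invoke ``a standard maximum principle argument'' to keep $w>0$. For the initial-value problem $w(R)=0$, $w'(R)>0$, the favourable sign is $f\le 0$ (then $w''\ge(1+\psi(n_2))w\ge 0$ while $w\ge 0$, so $w'$ never drops below $w'(R)$); with $f\ge 0$ the forcing pushes $w$ \emph{down}, and nothing qualitative prevents a first zero (e.g.\ $-w''+w=M$ with $M$ large and $w'(R)$ small returns to zero in time $O(w'(R)/M)$). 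Closing this step requires a quantitative bound of the form $f\le\theta\sup w$ with $\theta<1$, obtained from $\psi(n_1)-\psi(n_2)\le\|\psi'\|(n_1-n_2)$, $n_1-n_2\le n_R e^{-G(\bar c)(x-R)/\sigma}\frac{\|G'\|}{\sigma}\int_R^x w$, and $\sup_{t>0}te^{-at}=1/(ea)$; this is precisely where item 3 of Assumption~\ref{assum_for_psi_G}, namely $c_B\|\psi'\|_{L^\infty}\|G'\|_{L^\infty}/(eG(\bar c))<1$, must enter. The paper runs exactly this computation (at the level of $u=c'$ at a first crossing point) in Step 4 of its proof, for the monotonicity of $B$ in $c_R$ --- the ``entirely analogous comparison'' to which you defer item 3 faces the same adverse sign, so your proposal leaves both the uniqueness in item 1 and item 3 unproved. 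Your guess that the technical assumption is needed to rule out an intermediate plateau is therefore a misdiagnosis. Note finally that the paper sidesteps monotonicity in $c'_R$ for the mere existence of $B$ by taking $B$ to be the supremum of the Type I slopes, bounded a priori by the elementary estimate $c'_R\le\sqrt{2}(c_B-c_R)$; you could adopt that for item 1 and reserve the quantitative argument for item 3.
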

The proofs of Theorem \ref{invivothm} and Lemma \ref{lemma_for_right} are given in the next subsection. 
\subsection{Main proofs}
\subsubsection{Proof of Theorem \ref{invivothm} }
\begin{proof}[Proof of Theorem \ref{invivothm}]
The framework of the proof can be represented by the following schematic drawing:
  \begin{figure}[htbp]
    \centering
    \includegraphics[width=1.0\textwidth]{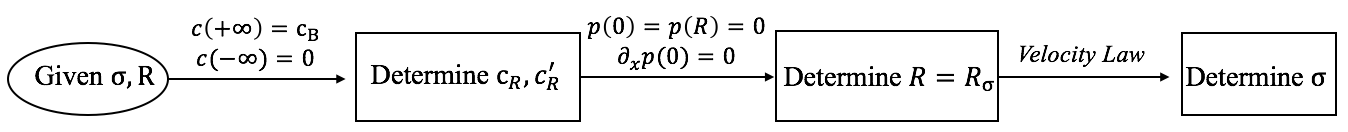}
    \caption{Framework of the Proof}
    \label{fig:7}
\end{figure}

There are four free variables $\sigma,\,R,\,c(R)=c_R,\,\partial_xc(R)=c'_R$ in this system. The strategy of the proof is to choose these variables properly to guarantee the existence of a solution. First, we fix $\sigma$ and $R$. By the equation for $c$ (\ref{tw:in_vivo}) combined with the equation for $n$ $(\ref{tw:n})$, we can determine the values $c_R$ and $c'_R$ to make $c$ satisfy the asymptotic conditions $c(-\infty)=0$ and $c(+\infty)=c_B$. Second, by the equation for $p$ (\ref{tw:p}), the extra boundary condition would provide an extra equation for $R$, which will determine $R=R_\sigma$ corresponding to the fixed $\sigma$. Last, by utilizing the velocity law (\ref{tw:v}), we can finally determine the traveling wave speed $\sigma$. And the theorem follows by studying the fixed point problem.

We divide the proof into two main steps. In the first step, we construct the solution as shown in Fig \ref{fig:6}, i.e. we build the solution successively in each region and determine $c_R,c_R'$ and $R$. In the second step, we utilize the velocity law to construct a fixed point argument to determine the variable $\sigma$. 
	\paragraph{\textbf{Step 1. Construct the solution successively on each region.}}
 In this step, we aim to construct the solution as shown in Fig \ref{fig:6}. Suppose $R>0,c_R\in(\bar{c},\frac{c_B}{1+\psi(n_R)}),c'R>0$ are undetermined. We will first construct the solution using these undetermined coefficients. Then we use the conditions $c(+\infty)=c_B$ and $c(-\infty)=0$ to determine the maps $R\mapsto c_R, R\mapsto c'_R$. Last we use the equations satisfied by $p$ to determine $R$. 

	  On the outer region $[R,+\infty)$: By Lemma \ref{lemma_for_right}, when $c_R'=B(\sigma,c_R)$,  $(n(x),c(x))$ meets all the requirements. In addition, $p(x)=0$.
	
	  On the saturated region $(0,R)$: $n(x)=1$. Setting $\tilde{c}'_R=c'_R/\sqrt{\psi(1)}$, we obtain
	\begin{equation}\label{c_explixit}
		c(x)=c_R \cosh (\sqrt{\psi(1)}(x-R))+\tilde{c}_R^{\prime} \sinh (\sqrt{\psi(1)}(x-R)),
	\end{equation}
	and
	$$
p(x)=-\int_0^x \int_0^y G(c(z)) d z d y=-\int_0^x(x-z) G(c(z)) d z .
	$$
 by  equation (\ref{tw:in_vivo}) and equation (\ref{tw:p}) respectively. We note here that we do not use the boundary condition $p(R)=0$ to obtain the explicit formula for $p$ here. The extra relation $p(R)=0$ implies an equation for $R$: 
	\begin{equation}\label{cond_for_R}
 \begin{aligned}
 \int_0^R(R-z) G(c(z)) d z&=
 \int_0^R(R-z) G(c_R \cosh (\sqrt{\psi(1)}(z-R))+\tilde{c}_R^{\prime} \sinh (\sqrt{\psi(1)}(z-R))) d z\\&=
		\int_0^1 s G\left(c_R \cosh (\sqrt{\psi(1)} R s)-\tilde{c}_R^{\prime} \sinh (\sqrt{\psi(1)} R s)\right) d s=0.
 \end{aligned}
\end{equation}
We note here that this equation implies $c(0)<\bar{c}$ naturally.

	 On the necrotic core $(-\infty,0]$: $p(x)=0,n(x)=e^{\frac{g_-}{\sigma}x}$. According to Lemma \ref{lemma_for_left} in Appendix A, the requirements of $c$ (including $c$ is increasing and $c(-\infty)=0$) implies 
  \begin{equation}\label{c0_relation}
  \partial_xc(0)=A(\sigma)c(0), 
  \end{equation}
  where $0<A(\sigma)<\sqrt{\psi(1)}$ is continuous with respect to $\sigma$ and can be extended by continuity to $A(0)=0$.
  By (\ref{c_explixit}) we know that
  $$
  c(0)=c_R\cosh(\sqrt{\psi(1)}R)-\tilde{c}_R'\sinh(\sqrt{\psi(1)}R)
  $$
  and
  $$
  \partial_xc(0)=-\sqrt{\psi(1)}c_R\sinh(\sqrt{\psi(1)}R)+\sqrt{\psi(1)}\tilde{c}_R'\cosh(\sqrt{\psi(1)}R).
  $$
  Therefore, (\ref{c0_relation}) is equivalent to
	\begin{equation}\label{cond_for_cRpi}
		\tilde{c}_R^{\prime}=c_R \frac{A(\sigma) \cosh (\sqrt{\psi(1)} R)+\sqrt{\psi(1)} \sinh (\sqrt{\psi(1)} R)}{A(\sigma) \sinh (\sqrt{\psi(1)} R)+\sqrt{\psi(1)} \cosh (\sqrt{\psi(1)} R)}.
	\end{equation}
	 The relations satisfied by $c_R$ and $c'_R$ can be written as the following equation set:
 \begin{equation}\label{equ_cRcR'}
 \begin{cases}
     \tilde{c}_R^{\prime}&=c_R \frac{A(\sigma) \cosh (\sqrt{\psi(1)} R)+\sqrt{\psi(1)} \sinh (\sqrt{\psi(1)} R)}{A(\sigma) \sinh (\sqrt{\psi(1)} R)+\sqrt{\psi(1)} \cosh (\sqrt{\psi(1)} R)},\\
     \tilde{c}_R^{\prime}&=\frac{B(\sigma,c_R)}{\sqrt{\psi(1)}}.
 \end{cases}
 \end{equation}
 We should show that system (\ref{equ_cRcR'}) admits a unique solution. 
 
 Since $0<A(\sigma)<\sqrt{\psi(1)}$, we have
 $$
 \tanh(\sqrt{\psi(1)}R)<\frac{A(\sigma) \cosh (\sqrt{\psi(1)} R)+\sqrt{\psi(1)} \sinh (\sqrt{\psi(1)} R)}{A(\sigma) \sinh (\sqrt{\psi(1)} R)+\sqrt{\psi(1)} \cosh (\sqrt{\psi(1)} R)}<1.
 $$
 Since $B(\sigma,c_R)/(\sqrt{\psi(1)}c_R)$ is decreasing with respect to $c_R$, it suffices to show that the range of 
 $B(\sigma,c_R)/(\sqrt{\psi(1)}c_R)$ covers $[\tanh(\sqrt{\psi(1)}R),1]$ to guarantee existence of solutions to system (\ref{equ_cRcR'}). By Lemma \ref{lemma_for_right}, we know the range of 
 $B(\sigma,c_R)/(\sqrt{\psi(1)}c_R)$ covers
 $$
	\begin{aligned}
	  &[\frac{c_B-c_B/(1+\psi(n_R))}{\sqrt{\psi(1)}c_B/(1+\psi(n_R))},\frac{c_B/\sqrt{1+\psi(n_R)}-\sqrt{(1+\psi(n_R)}\bar{c}}{\sqrt{\psi(1)}\bar{c}}] \\
      =
	&[\frac{\psi(n_R)}{\sqrt{\psi(1)}},\frac{1}{\sqrt{\psi(1)}}\frac{\frac{c_B}{\sqrt{1+\psi(n_R)}}-\bar{c}\sqrt{1+\psi(n_R)}}{\bar{c}}]. \qquad
	\end{aligned}
 $$
	
 Define $R_0=R_0(n_R)>0$ such that 
 
 $$\tanh(\sqrt{\psi(1)}R_0)=\frac{\psi(n_R)}{\sqrt{\psi(1)}}.$$
 By Definition \ref{assump_for_barn}, we know that when $0<n_R<\bar{n}$ and $R>R_0(n_R)$, $$ [\tanh(\sqrt{\psi(1)}R),1] \subset[\frac{\psi(n_R)}{\sqrt{\psi(1)}},\frac{1}{\sqrt{\psi(1)}}\frac{\frac{c_B}{\sqrt{1+\psi(n_R)}}-\bar{c}\sqrt{1+\psi(n_R)}}{\bar{c}}].
 $$
 holds. Thus $B(\sigma,c_R)/(\sqrt{\psi(1)}c_R)$ covers $[\tanh(\sqrt{\psi(1)}R),1]$. Therefore we define a mapping $R\mapsto c_R(R>R_0)$ such that $c_R$ is the solution to (\ref{equ_cRcR'}). By the relation (\ref{cond_for_cRpi}) we know that $\tilde{c}'_R/c_R$ is continuous and increasing with respect to $R$. Hence $B(\sigma,c_R)/(\sqrt{\psi(1)}c_R)$is increasing with respect to $R$ and that the map $R\mapsto c_R$ is decreasing. It is not hard to verify that the map is also continuous.
	
	Now it remains to prove that there exists $R_\sigma>R_0$ such that the relation (\ref{cond_for_R}) holds. We define
	\begin{equation}\label{gamma_def}
	\begin{aligned}
	\gamma_\sigma(R, s)  &:=c_R \cosh (\sqrt{\psi(1)} R s)-\tilde{c}_R^{\prime} \sinh (\sqrt{\psi(1)} R s) \\
	& =c_R \frac{A(\sigma) \sinh (\sqrt{\psi(1)} R(1-s))+\sqrt{\psi(1)} \cosh (\sqrt{\psi(1)} R(1-s))}{A(\sigma) \sinh (\sqrt{\psi(1)} R)+\sqrt{\psi(1)} \cosh (\sqrt{\psi(1)} R)}\\
	& :=c_R\tilde{\gamma}_\sigma(R,s).
	\end{aligned}
	\end{equation}
	Then, the relation (\ref{cond_for_R}) can be written as
	$$
	\int_0^1 s G\left(\gamma_\sigma\left(R_\sigma, s\right)\right) d s=0.
	$$
	Since $c_R$ and $\tilde{\gamma}_\sigma(R,s)$ are non-negative and decreasing with respect to $R$, we can conclude that $\gamma_\sigma(R,s)$ is decreasing with respect to $R$.
	
	From (\ref{gamma_def}) we also know that $\gamma_\sigma(R,s)$ is decreasing with respect to $s$. Then when $R=R_0$, $$\gamma_\sigma(R_0,s)\ge \gamma_\sigma(R_0,1)=c_{R_0}\frac{\sqrt{\psi(1)}}{A(\sigma) \sinh (\sqrt{\psi(1)} R_0)+\sqrt{\psi(1)} \cosh (\sqrt{\psi(1)} R_0)}\ge c_{R{0}}/e^{\sqrt{\psi(1)}R_0}.$$
   By the decreasing property of the map $R\mapsto c_R$, it is easy to find that $c_{R_0}>C$ in Assumption \ref{assump_for_barn}. Hence $\gamma_\sigma(R_0,s)>\bar{c}$. Therefore $\int_0^1 s G\left(\gamma_\sigma\left(R_0, s\right)\right) d s>0.$
	
	On the other hand, when $R\rightarrow+\infty$, we have $\gamma_\sigma(R,s)\sim e^{-\sqrt{\psi(1)}Rs}\rightarrow0(\text{for } s>0)$. Then $
	\int_0^1 s G\left(\gamma_\sigma(R, s)\right) d s \rightarrow G(0) / 2<0.
	$ Thus by continuity and monotonicity, we conclude on the existence of a unique $R_\sigma>0$ such that (\ref{cond_for_R}) is satisfied. In addition, by continuity of $\sigma\mapsto A(\sigma)$ and $\sigma\mapsto B(\sigma,c_R) $, we deduce that $\sigma\mapsto R_\sigma$ is continuous.

	\paragraph{\textbf{Step 2. Determine the traveling wave speed $\sigma$ through the velocity law.}}
	We can rewrite the fixed point problem for $\sigma$ as
	\begin{equation}\label{fix_point_newform}
		(1-n_R)\sigma=R_\sigma \int_0^1 G\left(\gamma_\sigma\left(R_\sigma, s\right)\right) d s.
	\end{equation}
	First,  since $A(\sigma)\rightarrow 0$ when $\sigma\rightarrow0$, we know that $R_\sigma\rightarrow\bar{R}>R_0$ which is a solution to
	$$
	\int_0^1 s G\left(\gamma_\sigma\left(\bar{R}, s\right)\right) d s=\int_0^1 s G\left(c_{\bar{R}} \frac{\cosh \left(\sqrt{\psi(1)} \bar{R}(1-s)\right)}{\cosh \left(\sqrt{\psi(1)} \bar{R}\right)}\right) d s=0
	$$
	
	Since the map $s\mapsto \gamma_\sigma(\bar{R},s)$ is decreasing, there exists $s_0\in(0,1)$ such that $G(\gamma_\sigma(\bar{R},s))\ge 0$ for all $s<s_0$ and $G(\gamma_\sigma(\bar{R},s))< 0$ for all $s>s_0$. Thus 
	$$
	0=\int_0^1 s G\left(\gamma_\sigma\left(\bar{R}, s\right)\right) d s<s_0 \int_0^1 G\left(\gamma_\sigma\left(\bar{R}, s\right)\right) d s
	$$
	Then when $\sigma\rightarrow0$, the right-hand side of the velocity law (\ref{fix_point_newform}) is strictly positive and hence larger than the left-hand side of it.
	
	On the other hand, from (\ref{gamma_def}) we have $$
	\gamma_\sigma(R, s) \leq \frac{c_R}{\cosh (\sqrt{\psi(1)} R s)}\leq  \frac{c_B}{\cosh (\sqrt{\psi(1)} R s)}
	.$$ It is directly to verify that,  there exists a unique $R_b>0$ such that 
	$$
	\int_0^1 s G\left(\frac{c_B}{\cosh \left(\sqrt{\psi(1)} R_b s\right)}\right) d s=0.
	$$
	by the monotonicity of $G$. Therefore, for all $R\ge R_b$, by the monotonicity of $G$, we have
	$$
	\begin{aligned}
	\int_0^1 s G\left(\gamma_\sigma(R, s)\right) d s & \leq \int_0^1 s G\left(\frac{c_B}{\cosh (\sqrt{\psi(1)} R s)}\right) d s \\
	& \leq \int_0^1 s G\left(\frac{c_B}{\cosh \left(\sqrt{\psi(1)} R_b s\right)}\right) d s=0 .
	\end{aligned}
	$$
	Thus $
	\int_0^1 s G\left(\gamma_\sigma\left(R_\sigma, s\right)\right) d s=0
	$ implies that $0<R_\sigma\le R_b$. It follows that the right-hand side of (\ref{fix_point_newform}) is bounded. When $\sigma\rightarrow+\infty$, the left-hand side of the velocity law (\ref{fix_point_newform}) is larger than the right-hand side of it. 
 
 By continuity, we know that the fixed point problem (\ref{fix_point_newform}) admits a solution $\sigma>0$.
\end{proof}
\subsubsection{Proof of Lemma \ref{lemma_for_right} }
\begin{proof}[Proof of Lemma \ref{lemma_for_right}]
 
 It is direct to show that given $c(x)$, $n(x)$ can be represented as
	$$
	n(x)=n_Re^{\int_{R}^{x}-\frac{G(c(y))}{\sigma}dy},\quad x\ge R,
	$$
	according to $-\sigma \partial_xn=nG(c)$ for $x>R$ and $n(R+)=n_R$. Thus we can reformulate (\ref{problem_for_right}) as the following ODE system:
	\begin{align}
	&\partial_xc=u,&c(R)=c_R;\label{c_ODE_for_right}\\
	&\partial_xu=(c-c_B)+\psi(n_Re^{\int_{R}^{x}-\frac{G(c(y))}{\sigma}dy})c,&u(R)=c_R'.\label{u_ODE_for_right}
	\end{align}
	
     Given $\bar{c}<c_R<c_B/(1+\psi(n_R))$. For $c_R'\ge 0$, there are two kinds of solutions to the reformulated system \eqref{c_ODE_for_right}-\eqref{u_ODE_for_right}:
	\begin{enumerate}
		\item[I.]  there exists $z\in[R,+\infty)$ such that $u(z)=0$ and $u(x)>0$ for all $x\in[R,z)$;
		
		\item[II.]  $u(x)>0$ for all $x\in[R,+\infty)$.
	\end{enumerate}

Our goal is to determine $c_R'$ such that the solution to (\ref{c_ODE_for_right})-(\ref{u_ODE_for_right}) satisfies $u(x)>0$ when $x\ge R$ and $u(+\infty)=0$, which is a "critical" solution of these two kinds of solutions. We define
$$
\ell:=\sup\{c'_R\ge 0: \text{The solution to (\ref{c_ODE_for_right})-(\ref{u_ODE_for_right}) is of Type I}\}.
$$
The remainder of the proof is divided into 4 steps. In the first and the second step, we show that $\ell\le c_B-c_R<+\infty$ and $\ell\ge \frac{c_B}{\sqrt{1+\psi(n_R)}}-\sqrt{1+\psi(n_R)}c_R$ respectively. In the third step, we show that if we take $c_R'=\ell$, the corresponding solution meets all the requirements above.  In the fourth step, we show that there does not exist $\tilde{\ell}\not=\ell$ such that the solution with $u(R)=\tilde{\ell}$ can meet the requirements. Hence we can define $B(\sigma, c_R):=\ell$. Additionally, we show that  $B(\sigma, c_R)$ is decreasing with respect to $c_R$ in the last step.

\paragraph{\textbf{Step 1. Derive a lower bound of $\ell$.}}
We aim to prove
$$
\ell\ge \frac{c_B}{\sqrt{1+\psi(n_R)}}-\sqrt{1+\psi(n_R)}c_R
$$ 
in this step. It suffices to show that when $c_R'<\frac{c_B}{\sqrt{1+\psi(n_R)}}-\sqrt{1+\psi(n_R)}c_R$, the solution $(c,u)$ corresponding to $c_R'$ is of type I. 

We consider the following ODE system:
\begin{align}
&\partial_x\tilde{c}=\tilde{u},&\tilde{c}(R)=c_R;\label{ctil_ODE_for_right}\\
&\partial_x\tilde{u}=(\tilde{c}-c_B)+\psi(n_R)\tilde{c},&\tilde{u}(R)=c_R'.\label{util_ODE_for_right}
\end{align}
We notice that when $$c_R'= \frac{c_B}{\sqrt{1+\psi(n_R)}}-\sqrt{1+\psi(n_R)}c_R,$$ system (\ref{ctil_ODE_for_right})-(\ref{util_ODE_for_right}) admits a solution 
$$
\tilde{c}(x)=\frac{c_B}{1+\psi(n_R)}-\frac{c_R'}{\sqrt{1+\psi(n_R)}}e^{-\sqrt{1+\psi(n_R)}(x-R)},
$$
which satisfies $\partial_x\tilde{u}(x)>0\,(x\ge R)$ and $\tilde{u}(+\infty)=0$. 

If the solution $(c,u)$ is of type II, then $c$ is non-decreasing and 
$$
\psi(n_Re^{\int_{R}^{x}-\frac{G(c(y))}{\sigma}dy})<\psi(n_R).
$$
Thus when $$c_R'< \frac{c_B}{\sqrt{1+\psi(n_R)}}-\sqrt{1+\psi(n_R)}c_R,$$ the solution $(c,u)$ of system(\ref{c_ODE_for_right})-(\ref{u_ODE_for_right})  should be bounded from above by the solution $(\tilde{c},\tilde{u})$ of system  (\ref{ctil_ODE_for_right})-(\ref{util_ODE_for_right}), i.e. $u\le \tilde{u}$ and $c\le \tilde{c}$. It implies that $$\partial_xu\le \partial_x\tilde{u} \text{ and } u-\tilde{u}\le u(R)-\tilde{u}(R)=c_R'- \frac{c_B}{\sqrt{1+\psi(n_R)}}-\sqrt{1+\psi(n_R)}c_R<0.$$ By the property that $\tilde{u}(+\infty)=0$, we know that $u(+\infty)<0$ and the solution $(c,u)$ is of Type I, which contradicts with the assumption that $(c,u)$ is of Type II. Therefore, when $c_R'<\ell$, the solution $(c,u)$ corresponding to $c_R'$ is of Type I.

Thus we obtain a lower bound of $\ell$ that $$\ell\ge\frac{c_B}{\sqrt{1+\psi(n_R)}}-\sqrt{1+\psi(n_R)}c_R.$$

\paragraph{ \textbf{Step 2. Derive an upper bound of $\ell$}.}
We aim to prove
$$
\ell\le c_B-c_R
$$ 
in this step. It suffices to show that when $c_R'>c_B-c_R$, the solution $(c,u)$ corresponding to $c_R'$ is of type II. 

We consider the following ODE system:
\begin{align}
\partial_x\bar{c}&=\bar{u},&\bar{c}(R)=c_R;\label{ctil_ODE_for_right_upp}\\
\partial_x\bar{u}&=(\bar{c}-c_B),&\bar{u}(R)=c_R'.\label{util_ODE_for_right_upp}
\end{align}
 
When $c_R'= c_B-c_R$, system (\ref{ctil_ODE_for_right_upp})-(\ref{util_ODE_for_right_upp}) admits a solution 
$$
\bar{c}(x)=c_B-c_R'e^{-(x-R)},
$$
which satisfies $\partial_x\tilde{u}(x)>0$ when $x\ge R$ and $\tilde{u}(+\infty)=0$. Then following a similar analysis to the previous step, we can obtain an upper bound of $\ell$ that $$\ell\le c_B-c_R.$$

\paragraph{\textbf{Step 3. Examine the case  $c_R'=\ell$.}}
We first show that the solution $(c_{\ell},u_{\ell})$ corresponding to  $c_R'=\ell$ is of type II. If not, there exists $z\in[R,+\infty)$ such that $u_{\ell}(z)=0$ and $u_{\ell}(x)>0$ for all $x\in[R,z)$. Then $\partial_xu_{\ell}(z)\le 0$. By directly computations, we have 
$$
\begin{aligned}
\partial_{xx}u_{\ell}(z)&=u_{\ell}(z)+\psi(n_Re^{\int_{R}^{z}-\frac{G(c_{\ell}(y))}{\sigma}dy})u_{\ell}(z)+\psi'(n_Re^{\int_{R}^{z}-\frac{G(c_{\ell}(y))}{\sigma}dy})(-n_R\frac{G(c_{\ell}(z))}{\sigma})c_{\ell}(z)\\
&=\psi'(n_Re^{\int_{R}^{z}-\frac{G(c_{\ell}(y))}{\sigma}dy})(-n_R\frac{G(c_{\ell}(z))}{\sigma})c_{\ell}(z).
\end{aligned}
$$
Since $c_{\ell}(z)>c_R>\bar{c}$, we have $G(c_{\ell}(z))>0$. Therefore, $\partial_{xx}u_{\ell}(z)<0$. The facts $u_{\ell}(z)=0$, $\partial_xu_{\ell}(z)\le 0$ and $\partial_{xx}u_{\ell}(z)<0$ yield that there exists $\Delta z>0$ such that $u_{\ell}(x)<0$ in $(z,z+\Delta z)$ and hence $c_{\ell}(z+\Delta z)<c_{\ell}(z)$. We denote $c_{\ell}(z)-c_{\ell}(z+\Delta z)=\Delta c>0$.  

By the continuous dependence of the solution to (\ref{problem_for_right}) on the initial data $(c_R,c_R',n_R)$, we know that there exists $\Delta \ell>0$, such that when $|c_R'-\ell|<\Delta \ell$, the corresponding solution $u$ satisfies $|c(x)-c_{\ell}(x)|<\frac{\Delta c}{2}$ in the interval $[R,z+\Delta z]$. It implies that 
$$
\begin{aligned}
c(z+\Delta z)-c(z)&=(c(z+\Delta z)-c_{\ell}(z+\Delta z))+(c_{\ell}(z)-c(z))+(c_{\ell}(z+\Delta z)-c_{\ell}(z))\\
&\le|c(z+\Delta z)-c_{\ell}(z+\Delta z)|+|c_{\ell}(z)-c(z)|+(c_{\ell}(z+\Delta z)-c_{\ell}(z))\\
&<\frac{\Delta c}{2}+\frac{\Delta c}{2}-\Delta c=0.
\end{aligned}
$$
Therefore $c$ is not increasing and the solution $(c,u)$ corresponding to $c_R'$ is of type I. It yields that
$$
\ell+\frac{\Delta \ell}{2}\in\{c'_R\ge 0: \text{The solution to (\ref{c_ODE_for_right})-(\ref{u_ODE_for_right}) is of Type I}\},
$$
which contradicts the definition of $\ell$. Hence the solution $(c_{\ell},u_{\ell})$ corresponding to $c_R'=\ell$ is of type II, which implies that $c_{\ell}$ is increasing.

 Now we show that the solution $(c_\ell,u_\ell)$ satisfies $c_{\ell}(+\infty)=c_B$. If not, since $c_{\ell}$ is increasing, we can define that $c(+\infty)=c_{\infty}\not=c_B$ ($c_{\infty}$ can take the value $+\infty$).  Since $c_{\ell}(R)>\bar{c}$ and  $c_{\ell}(x)$ is increasing on $[R,\infty)$, we have$$
\lim\limits_{x\rightarrow\infty} n_Re^{\int_{R}^{x}-\frac{G(c(y))}{\sigma}dy}=0,
$$
and therefore 
$$
\lim\limits_{x\rightarrow\infty} \psi(n_Re^{\int_{R}^{x}-\frac{G(c(y))}{\sigma}dy})=0.
$$
If $c_{\infty}<c_B$, we take $x\rightarrow\infty$ in equation (\ref{util_ODE_for_right}) and obtain $u_{\ell}(+\infty)=c_{\infty}-c_B<0$, which contradicts with the fact that $(c_{\ell},u_{\ell})$ is of type II. If $c_{\infty}>c_B$, we have $u_{\ell}(+\infty)=c_{\infty}-c_B>0$ by the same argument, and hence 
$$\inf_{x\ge R}u_{\ell}(x)>0.$$
By the continuous dependence of the solution to (\ref{problem_for_right}), we know that there exists $\Delta \ell>0$, such that when $|c_R'-\ell|<\Delta \ell$, the corresponding solution $u$ satisfies $u(x)>0$ in $[R,\infty)$, and hence the solution $(c,u)$ is of type II. It implies that
$$(\ell-\Delta \ell,\ell]\cap \{c'_R\ge 0: \text{The solution to (\ref{c_ODE_for_right})-(\ref{u_ODE_for_right}) is of Type I}\}=\emptyset,$$
which contradicts with the definition of $\ell$. 
 \paragraph{\textbf{Step 4. Conclude the proof by defining $B(\sigma,c_R)=\ell$.}}
We first prove the following proposition:
\begin{proposition}\label{monotone_prop}
given $c_R^1>(\ge)c_R^2$. Suppose that $(c_i,u_i)(i\in\{1,2\})$ are the solutions to the following systems:
$$
\begin{dcases}
-\sigma \partial_xn=nG(c),&x>R;\\
-\partial_{xx}c+\psi(n)c=(c_B-c),&x>R;\\
n(R+)=n_R,\,c(R)=c_R^i,\,\partial_xc(R)=c'^i_R.
\end{dcases}
$$
If $(c_1,u_1)$ and $(c_2,u_2)$ satisfy:
\begin{itemize}
	\item $c_i$ and $c_2$ are increasing;
	\item $c_1(+\infty)=c_2(+\infty)=c_B$,
\end{itemize}
then $c_R'^1<(\le) c_R'^2$.
\end{proposition}

\begin{proof}
We first prove the case when $c_R^1>c_R^2$. If the proposition does not hold, we have $c_R'^1\ge c_R'^2$. Then it is not hard to check that $c_1(R)>c_2(R),\,u_1(R)\ge u_2(R),\,\partial_{x}u_1(R)>\partial_{x}u_2(R)$ and $c_1(+\infty)=c_2(+\infty)$. By the facts that $u_1(R)\ge u_2(R)$ and $\partial_{x}u_1(R)>\partial_{x}u_2(R)$, we know that there exists $\delta>0$, such that when $y\in (R,R+\delta)$, $u_1(y)>u_2(y)$. By the facts that $c_1(R)>c_2(R)$ and $c_1(+\infty)=c_2(+\infty)$, we know that there exists $x>R$ such that $u_1(x)\le u_2(x)$.

Therefore, there exists $x_0>R$ such that $u_1(x_0)=u_2(x_0)$ and $u_1(y)>u_2(y)$ for all $R< y<x_0$. It follows that $\partial_{x}u_1(x_0)-\partial_{x}u_2(x_0)\le 0$ and $c_1-c_2$ is increasing on $[R,x_0]$. Denote $\Delta c=c_1(x_0)-c_2(x_0)> c_R^1-c_R^2>0$, we have
$$
\begin{aligned}
\partial_{x}u_1(x_0)-\partial_{x}u_2(x_0)&=[(c_1(x_0)-c_B)+\psi(n_Re^{\int_{R}^{x_0}-\frac{G(c_1(y))}{\sigma}dy}c_1]-[(c_2(x_0)-c_B)+\psi(n_Re^{\int_{R}^{x_0}-\frac{G(c_2(y))}{\sigma}dy}c_2]\\
&\ge\Delta c-c_2\|\psi'\|n_R(e^{\int_{R}^{x_0}-\frac{G(c_2(y))}{\sigma}dy}-e^{\int_{R}^{x_0}-\frac{G(c_1(y))}{\sigma}dy})\\
&\ge\Delta c-c_B\|\psi'\|e^{\int_{R}^{x_0}-\frac{G(c_2(y))}{\sigma}dy}(1-e^{-\frac{\|G'\|\Delta c}{\sigma}(R-x_0)})\\
&\ge \Delta c-c_B\|\psi'\|e^{-\frac{(R-x_0)G(\bar{c})}{\sigma}}\|G'\|\frac{\Delta c}{\sigma}(R-x_0)\\
&\ge \Delta c(1-\frac{c_B\|\psi'\|\|G'\|}{eG(\bar{c})})\\
&>0.
\end{aligned}
$$
It contradicts with the condition $\partial_{x}u_1(x_0)-\partial_{x}u_2(x_0)\le 0$.

Now we consider the case when $c_R^1\ge c_R^2$. If the proposition does not hold, we have $c_R'^1>c_R'^2$. Then it is not hard to check that $c_1(R)\ge c_2(R),\,u_1(R)> u_2(R),\,\partial_{x}u_1(R)\ge \partial_{x}u_2(R)$ and $c_1(+\infty)=c_2(+\infty)$. By similar arguments as the case $c_R^1>c_R^2$, we can still find $x_0>R$ such that $u_1(x_0)=u_2(x_0)$ and $u_1(y)>u_2(y)$ for all $R\le y<x_0$. Additionally, $\Delta c=c_1(x_0)-c_2(x_0)>c_R^1-c_R^2\ge0$. By the same argument as in the previous case, we can also reach a contradiction.
\end{proof}

The case $c_R^1>c_R^2$ in proposition (\ref{monotone_prop}) implies that $\ell$ is decreasing with respect to $c_R$. The case $c_R^1=c_R^2$ in proposition (\ref{monotone_prop}) implies that there does not exist $\tilde{\ell}\not=\ell$ such that the solution with $u(R)=\tilde{\ell}$ can meet all the requirements. Hence $B(\sigma,c_R):=\ell$ is well-defined and decreasing with respect to $c_R$. Now the continuity of $B(\sigma,c_R)=\ell$ with respect to $(\sigma,c_R)$ can be directly deduced by the fact that the solution to system (\ref{problem_for_right}) is continuously dependent on the parameter $\sigma$ and the initial data $c_R,c_R'$. We omit the details for simplicity.

\end{proof}

\section{Conclusion and discussion}\label{sec:conclusion}

In this paper we propose a free boundary tumor growth model motivated from the incompressible limit. The key feature is that at each time the pressure $p$ solves an obstacle problem on a time-evolving domain $\Omega(t)$, which gives the necrotic core as the coincidence set $\Lambda(t)$. This formulation is inspired by \cite{perthame2014traveling,guillen2020heleshaw} and is expected to be a special case of \cite{guillen2020heleshaw} under certain assumptions. 

We study two analytical aspects of the proposed model. For the formation of the necrotic core, we derive a semi-analytical solution, which illustrates the evolution of solution profiles from transition behavior to long term dynamics. For the effect of the outer density, we prove the existence of the traveling wave, which turns out that a smallness condition is needed.

In literature, free boundary models with a necrotic core have been proposed directly, rather than being derived from an incompressible limit, on which there are many studies including mathematical analysis and numerics \cite{byrne1996growth,cui2001analysis,cui2006formationNecrotic,HAO2012694,cui2019analysisNecrotic,wu2021analysis}. These models also feature an evolving domain $\Omega(t)$, coupled with the dynamics of pressure $p$ and nutrients $c$. Yet the specific mechanisms are different. In particular, instead of imposing $p=0$ on $\p\Omega(t)$,  a boundary condition involving the surface tension of $\p\Omega(t)$ is considered.

We want to highlight one interesting key distinction: how the necrotic core is characterized. Notably, these more classical free boundary models also have an obstacle problem formulation (see e.g. \cite[Between (1.6)-(1.7)]{cui2019analysisNecrotic}), but there it is the nutrient $c$ that solves an obstacle problem. In other words, the tumor region transitions to the necrotic core when the nutrient is below a critical level. Whereas in our model, it is the pressure $p$ that solves an obstacle problem, whose coincidence set gives the necrotic core $\Lambda(t)$. It is also interesting to observe that in Theorem \ref{thm:3.1}, the region where the nutrient is less than a critical threshold ($G(c)<0$), is strictly larger than $\Lambda(t)$ (where $p=0$). 

\section*{Acknowledgements}
ZZ is supported by the National Key R\&D Program of China, Project Number 2021YFA1001200, and the NSFC, grant Number 12031013, 12171013. We thank Jian-Guo Liu, Benoît Perthame and Jiajun Tong for helpful discussions.

\begin{appendices}
\addtocontents{toc}{\protect\setcounter{tocdepth}{0}}
\section{A Technical Lemma}
To prove the main theorem, we need a technical lemma introduced in \cite{perthame2014traveling}.
\begin{lemma}\label{lemma_for_left}
    Let $\sigma>0$ and $g_->0$ be given. Suppose that assumption \ref{assum_for_psi_G} holds. We consider the following system
    \begin{equation}\label{problem_for_left}
    \begin{cases}
    \sigma \partial_{x}n=ng_-,&x<0;\\
    -\partial_{xx}c+\psi(n)c=0,&x<0;\\
    c(0)=c_0,\,\partial_{x}c(0)=c_0'.
    \end{cases}
    \end{equation}
    Then, there exists a continuous mapping $\sigma\rightarrow A(\sigma)\in(0,\sqrt{\psi(1)}]$ such that the solution of the system (\ref{problem_for_left}) is non-negative and non-decreasing if and only if $c_0'=A(\sigma)c_0$. Additionally, $A(\cdot)$ can be extended by continuity to $A(0)=0$.
\end{lemma}
To prove this lemma, we refer to Lemma 3.1 in \cite{perthame2014traveling}, which provides the proof.
\end{appendices}
\bibliography{intro}
\bibliographystyle{abbrv}
\end{document}